\theoremstyle{plain}
\newtheorem{theorem}{Theorem}
\newtheorem{lemma}[theorem]{Lemma}
\newtheorem{proposition}[theorem]{Proposition}
\theoremstyle{definition}
\theoremstyle{remark}
\newtheorem*{rmk}{Remark}
\numberwithin{equation}{section}
\numberwithin{theorem}{section}
\author{Kyle Pratt}
\address{Department of Mathematics \\
 The University of Illinois at Urbana-Champaign \\
 1409 West Green Street, Urbana, IL 61801}
\email{{\href{mailto:kpratt4@illinois.edu}{kpratt4@illinois.edu}},{\href{mailto:kvpratt@gmail.com}{kvpratt@gmail.com}}}
\subjclass[2010]{11N05, 11N32, 11N36, 11A63. \\ \indent \textit{Keywords and phrases}: prime numbers, sum of two squares, thin sequence, missing digit}
\title{Primes from sums of two squares and missing digits}
\begin{document}
\date{}

\maketitle

\begin{abstract}
Let $\mathcal{A}'$ be the set of integers missing any three fixed digits from their decimal expansion. We produce primes in a thin sequence by proving an asymptotic formula for counting primes of the form $p = m^2 + \ell^2$, with $\ell \in \mathcal{A}'$. 

The proof draws on ideas from the work of Friedlander-Iwaniec on primes of the form $p = x^2+y^4$, as well as ideas from the work of Maynard on primes with restricted digits.
\end{abstract}

\tableofcontents

\section{Introduction}\label{sec: introduction}

Some of the most interesting questions in analytic prime number theory arise from interactions with themes and ideas from other areas of mathematics. The famous twin prime conjecture, for example, arises from placing the multiplicative notion of a prime number in an additive context.

An early instance of this phenomenon is in Fermat's 1640 ``Christmas letter'' to Marin Mersenne \cite[pp. 212-217]{TanHen}, wherein he describes which numbers may be written as a sum of two integral squares (Fermat phrased his observations in terms of integers appearing as hypotenuses of right triangles). Along the way he noted that every prime $p \equiv 1 \pmod{4}$ may be written as $p = x^2+y^2$, but in true Fermat fashion he supplied no proof\footnote{Euler finally found a proof more than a century later \cite{Eul}.}.

At first glance the equation $p = x^2+y^2$ looks like an additive equation involving primes, but with the benefit of substantial hindsight we see this is in fact a multiplicative problem, for $x^2+y^2$ is the norm form of the algebraic number field $\mathbb{Q}(i)$.

Other famous problems in prime number theory concern primes in ``thin'' sequences, such as primes in short intervals, or primes of the form $p = n^2+1$. A set of integers $\mathcal{S}\subset[1,x]$ is thin if there are few elements of $\mathcal{S}$ relative to $x$ (think $|\mathcal{S}| \leq x^{1-\epsilon}$ for some $\epsilon > 0$). It is natural to ask under what conditions $\mathcal{S}$ contains prime numbers, but often these questions are very hard. Most often one needs the set $\mathcal{S}$ to have some nice multiplicative structure to exploit.

Several authors have proved the existence of infinitely many primes within different thin sequences. Fouvry and Iwaniec \cite{foiw} proved there are infinitely many primes of the form $p = m^2+q^2$, where $q$ is a prime number. The set $\{m^2+q^2 \leq x : q \text{ prime}\}$ has size $\approx x(\log x)^{-2}$, and so is thin in the sense that it has zero density inside of the primes. This is a nice example of additively-structured primes in a thin sequence.

Friedlander and Iwaniec \cite{friediw2} built on the foundation laid by Fouvry and Iwaniec, and proved there are infinitely many primes of the form $p = x^2+y^4$. This is a much thinner sequence of primes than those considered by Fouvry and Iwaniec, and consequently the proof is much more difficult. It is crucial for the work of Friedlander and Iwaniec that $x^2+y^4 = x^2 + (y^2)^2$.

Other striking examples are the works of Heath-Brown \cite{hb} on primes of the form $p =x^3+2y^3$ and Heath-Brown and Moroz \cite{hbm} on primes represented by cubic forms, and Maynard \cite{maynard2} on primes represented by incomplete norm forms. Heath-Brown and Li \cite{hbli} refined the theorem of Friedlander and Iwaniec by showing there are infinitely many primes of the form $p = x^2+q^4$, where $q$ is a prime. Each of these results relies heavily on the fact that the underlying polynomial is related to the norm form of an algebraic number field.

Polynomials offer one source of thin sequences, but they are not the only source. Particularly attractive are other, more exotic, thin sequences, like the set of integers missing a fixed digit from their decimal expansion. To be precise, let $a_0 \in \{0,1,2,\ldots,9\}$ be fixed, and let $\mathcal{A}$ be the set of nonnegative integers without the digit $a_0$ in their decimal expansion. We write $\mathbf{1}_\mathcal{A}$ for the indicator function of this set. We define 
\begin{align*}
\gamma_0 = \frac{\log 9}{\log 10} = 0.954\ldots,
\end{align*}
and note that
\begin{align}\label{eq: number of elts of cal A}
\sum_{\ell \leq y} \mathbf{1}_\mathcal{A}(\ell) \asymp y^{\gamma_0}, \ \ \ y \geq 2.
\end{align}

Our goal is to tie together several different mathematical strands by proving there are infinitely many primes $p$ of the form $p = m^2 + \ell^2$, where $\ell \in \mathcal{A}$. Note that $\frac{1}{2} + \frac{\gamma_0}{2} < 1$, so this sequence of primes is indeed thin.

The present work was inspired by Maynard's beautiful paper \cite{maynard}, wherein he showed there are infinitely many primes in the thin sequence $\mathcal{A}$. The key to the whole enterprise is that the Fourier transform of $\mathcal{A}$ has remarkable properties. Exploiting this Fourier structure has been vital in works on digit-related problems (see, for example, \cite{bcs,bs,bour,col,coq1,coq2,dm1,dm2,drm,ems1,ems2,kon} and the works cited therein). We also rely on this Fourier structure.

It turns out that we ultimately use few of the tools Maynard developed. Rather, our work is closer in spirit to the work of Fouvry and Iwaniec \cite{foiw} and the work of Friedlander and Iwaniec \cite{friediw2}.

Our basic strategy is to use a sieve to count the primes $p = m^2+\ell^2, \ell \in \mathcal{A}$. It is tempting to try and estimate the sum
\begin{align}\label{eq: naive sum}
\mathop{\sum \sum}_{\substack{m^2+\ell^2 \leq x \\ (\ell,10)=1}} \mathbf{1}_\mathcal{A}(\ell) \Lambda(m^2+\ell^2),
\end{align}
but for technical reasons it is convenient to prove a stronger theorem in which $\ell$ has no small prime factors. This ensures that $\ell$ is almost always coprime to other variables.

\begin{theorem}\label{main theorem}
Let $x$ be large, and let $A > 0$ be fixed. Let $P$ be a parameter which satisfies
\begin{align*}
(\log \log x)^4 \leq \log P \leq \frac{\sqrt{\log x}}{\log \log x},
\end{align*}
and define
\begin{align*}
\Pi = \prod_{p \leq P} p.
\end{align*}
We then have
\begin{align*}
\mathop{\sum \sum}_{\substack{m^2 + \ell^2\leq x \\ (\ell,\Pi)=1}} \mathbf{1}_\mathcal{A}(\ell) \Lambda(m^2+\ell^2) &= \frac{4C \kappa_1}{\pi} \frac{e^{-\gamma}}{\log P} \mathop{\sum \sum}_{m^2 + \ell^2 \leq x} \mathbf{1}_\mathcal{A}(\ell) + O \left(x^{\frac{1}{2} + \frac{\gamma_0}{2}} (\log x)^{-A} \right),
\end{align*}
where $\gamma$ denotes Euler's constant,
\begin{align*}
C &= \prod_p \left(1 - \frac{\chi(p)}{(p-1) (p- \chi(p))} \right),
\end{align*}
$\chi$ is the nonprincipal character modulo 4, and
\begin{align*}
\kappa_1 &=
\begin{cases}
\frac{10}{9}, \ \ \ \ \ \ \ \  &(a_0,10) \neq 1, \\
\frac{10 (\varphi(10)-1)}{9\varphi(10)}, \ \ \  &(a_0,10) = 1.
\end{cases}
\end{align*}
The implied constant depends on $A$ and is ineffective.
\end{theorem}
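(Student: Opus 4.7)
My plan is to adapt the strategy of Friedlander and Iwaniec for primes of the form $x^2+y^4$, combined with the Fourier-analytic treatment of digit sets pioneered by Maynard. The core will be Type I and Type II sum estimates for sums weighted by $\mathbf{1}_\mathcal{A}$, in which $m^2+\ell^2$ is factored in $\mathbb{Z}[i]$.

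The first step is to introduce a sieve to handle the restriction $(\ell,\Pi)=1$. Via a Buchstab-type iteration, one reduces the problem to estimating, for various squarefree $d \mid \Pi$,
$$\mathop{\sum\sum}_{\substack{m^2+\ell^2 \leq x \\ d \mid \ell}} \mathbf{1}_\mathcal{A}(\ell) \Lambda(m^2+\ell^2).$$
Combining these estimates and invoking Mertens' theorem yields the factor $e^{-\gamma}/\log P$ in the main term. To handle $\Lambda$, one next applies a combinatorial identity such as Vaughan's or Heath-Brown's, writing
$$\Lambda(m^2+\ell^2) = \sum_{jn = m^2+\ell^2}\alpha_j \beta_n$$
as a linear combination of Type I sums (with $\alpha_j$ supported on $j \leq D$) and Type II sums (with both $j$ and $n$ in intermediate ranges). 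Since $m^2+\ell^2 = N(m+i\ell)$ in $\mathbb{Z}[i]$, these divisor sums are naturally parsed in terms of Gaussian integers.

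For the Type I estimate, one needs, for $d$ not too large (say up to $x^{1/2+\epsilon}$),
$$\mathop{\sum\sum}_{\substack{m^2+\ell^2 \leq x \\ d \mid m^2+\ell^2}} \mathbf{1}_\mathcal{A}(\ell) \sim \rho(d)\mathop{\sum\sum}_{m^2+\ell^2 \leq x} \mathbf{1}_\mathcal{A}(\ell)$$
with an appropriate local density $\rho(d)$. Translating $d \mid m^2+\ell^2$ into a congruence on $m+i\ell$, one parametrizes $m+i\ell$ by its Gaussian divisors and, for fixed $\ell$, evaluates the sum over $m$ by Poisson summation. The sum over $\ell \in \mathcal{A}$ is then dispatched by expanding $\mathbf{1}_\mathcal{A}$ in Fourier and invoking Maynard's $L^1$ bounds on its Fourier transform. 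Collecting the local factors over all $d$ should produce the constants $\frac{4C}{\pi}$ and $\kappa_1$.

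The Type II sum is the main obstacle. After factoring in $\mathbb{Z}[i]$, one confronts a bilinear form of the shape
$$\sum_{m,\ell}\mathbf{1}_\mathcal{A}(\ell) \sum_{\substack{\mathfrak{a}\mathfrak{b} = m+i\ell\\ N(\mathfrak{a}) \asymp A}} \alpha_\mathfrak{a}\beta_\mathfrak{b}.$$
Cauchy-Schwarz in $(m,\ell)$ followed by a dispersion argument reduces the task to a bilinear form indexed by pairs of Gaussian ideals $(\mathfrak{a}_1,\mathfrak{a}_2)$. Expanding $\mathbf{1}_\mathcal{A}$ in Fourier and carrying out Poisson summation in $m$, the estimate reduces to cancellation in exponential sums over $\mathbb{Z}[i]$, each weighted by a Fourier coefficient $\widehat{\mathbf{1}_\mathcal{A}}(\xi)$. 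Weil-type square-root cancellation for these Gauss-Kloosterman sums must be married to $L^1$ estimates on $\widehat{\mathbf{1}_\mathcal{A}}$. The principal difficulty is precisely this balance: the Fourier mass of the digit indicator is substantial, and the Weil saving must absorb both that cost and the sieve amplification $1/\log P$. The constraint $\log P \leq \sqrt{\log x}/\log\log x$ in the hypothesis reflects the narrowness of this margin, while the ineffectiveness of the implied constant will enter through Siegel-type bounds for exceptional characters modulo Gaussian primes that may arise in the Type I local factors.
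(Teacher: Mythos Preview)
Your outline correctly identifies the Friedlander--Iwaniec/Maynard hybrid framework, but the Type II endgame is misidentified, and this is a genuine gap. After Cauchy--Schwarz and the change of variables in $\mathbb{Z}[i]$, what remains is not an exponential sum amenable to Weil-type cancellation. Rather (following the paper's Sections~\ref{sec: bilinear form transformations}--\ref{sec: endgame}), one must show that
\[
\sum_{\substack{z \\ (|z|^2,\Pi)=1}} \mu(|z|^2)\,\psi(z)\,g(|z|^2)\,q(\arg z)
\]
is small, where $\psi$ is a Hecke character of bounded conductor. This is a M\"obius-cancellation statement, and the saving comes from the zero-free region for the Hecke $L$-function $L(s,\xi)$, not from algebraic-geometric square-root cancellation. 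The Fourier structure of $\mathcal{A}$ is used earlier, to control congruence conditions on $\ell_1,\ell_2$ modulo the modulus $\Delta = \mathrm{Im}(\overline{z_1}z_2)$ that arises after dispersion; Maynard's $L^1$ and large-sieve bounds (Lemmas~\ref{maynard single denominator}--\ref{maynard type I}) dispatch those error terms, but the main-term cancellation is purely multiplicative. Consequently the ineffectivity enters through Siegel zeros of Hecke $L$-functions in the Type II analysis, not through the Type I local factors as you suggest.

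A second point: you propose to sieve out the condition $(\ell,\Pi)=1$ at the outset via Buchstab. The paper does the opposite, and for good reason. The condition $P^-(\ell) > P$ is not a nuisance but a tool: it is used repeatedly inside the Gaussian-integer argument to force various pairs (e.g.\ $\ell_1$ and $|\Delta|$, or $z_1$ and $z_2$) to be coprime up to negligible error, which is what permits the change of variables and the reduction to a single congruence modulo $|\Delta|$. The fundamental lemma is applied only at the very end, to the already-evaluated main term, to extract the factor $e^{-\gamma}/\log P$. If you Buchstab first, you lose this coprimality leverage inside the bilinear form.
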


\begin{rmk}
It is potential exceptional zeros for certain Hecke $L$-functions that make the implied constant in Theorem \ref{main theorem} ineffective.
\end{rmk}

We are able to avoid more sophisticated sievs like Harman's sieve \cite{harman}, and instead we require only Vaughan's identity (see \eqref{vaughans identity}). The application of Vaughan's identity reduces the problem to the estimation of ``Type I'' and ``Type II'' sums. The Type I information, which is quite strong, comes from a general result of Fouvry and Iwaniec (see Lemma \ref{Type I remainder}). The strength of the Type I bound relies on the homogeneous nature of the polynomial $x^2+y^2$. For the Type II sums we follow the outlines of the argument of Friedlander-Iwaniec. Our argument is less complicated in some places and more complicated in others. The desired cancellation eventually comes from an excursion into a zero-free region for Hecke $L$-functions. 

We obtain Type II information in a wide interval, much wider than that which is required given our amount of Type I information. This suggests the possibility of finding primes of the form $p = m^2 + \ell^2$, where $\ell$ is missing more than one digit in its decimal expansion.

\begin{theorem}\label{second theorem}
Let $\mathcal{B} \subset \{0,1,\ldots,9\}$ satisfy $1 \leq |\mathcal{B}| \leq 3$, and let $\mathcal{A}'$ denote the set of nonnegative integers whose decimal expansions consist only of the digits in $\{0,1,\ldots,9\} \backslash \mathcal{B}$. Let 
\begin{align*}
\gamma_\mathcal{B} = \frac{\log (10 - |\mathcal{B}|)}{\log 10}.
\end{align*}
Then, with the notation as above, we have
\begin{align*}
\mathop{\sum \sum}_{\substack{m^2 + \ell^2\leq x \\ (\ell,\Pi)=1}} \mathbf{1}_{\mathcal{A}'}(\ell) \Lambda(m^2+\ell^2) &= \frac{4C \kappa_\mathcal{B}}{\pi} \frac{e^{-\gamma}}{\log P} \mathop{\sum \sum}_{m^2 + \ell^2 \leq x} \mathbf{1}_{\mathcal{A}'}(\ell) + O \left(x^{\frac{1}{2} + \frac{\gamma_\mathcal{B}}{2}} (\log x)^{-A} \right),
\end{align*}
where
\begin{align*}
\kappa_\mathcal{B} = \frac{10}{\varphi(10)} \frac{\varphi(10) +|\{a \in \mathcal{B} : (a,10) \neq 1\}|-|\mathcal{B}|}{10-|\mathcal{B}|}.
\end{align*}
The implied constant depends on $A$ and is ineffective.
\end{theorem}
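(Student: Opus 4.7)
The plan is to run the proof of Theorem \ref{main theorem} essentially line by line, with $\mathbf{1}_\mathcal{A}$ replaced by $\mathbf{1}_{\mathcal{A}'}$ throughout and the local constant at the primes $2, 5$ recomputed. After Vaughan's identity is applied to the left-hand side, the problem again reduces to estimating Type I and Type II bilinear sums, now weighted by $\mathbf{1}_{\mathcal{A}'}(\ell)$ instead of $\mathbf{1}_\mathcal{A}(\ell)$. The Type I bound from Lemma \ref{Type I remainder} is a statement about general weights satisfying certain Fourier hypotheses, and the weight $\mathbf{1}_{\mathcal{A}'}$ satisfies these hypotheses with exponent $\gamma_\mathcal{B}$ in place of $\gamma_0$; thus Type I sums of the expected size go through unchanged. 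The Fourier input here is the factorization
\[
\widehat{\mathbf{1}_{\mathcal{A}'_k}}(\theta) = \prod_{j=0}^{k-1} F_\mathcal{B}(10^j \theta), \qquad F_\mathcal{B}(\theta) = \sum_{d \notin \mathcal{B}} e(d\theta),
\]
whose $L^1$-norm bound controls the Type I error.

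For the Type II sums, the paragraph preceding Theorem \ref{second theorem} gives the decisive observation: the Friedlander-Iwaniec-style analysis carried out for Theorem \ref{main theorem} yields cancellation in a range of bilinear ratios $M/N$ substantially wider than needed in the single-digit problem. I would record precisely the Type II interval $[M_0, M_1]$ produced in the proof of Theorem \ref{main theorem}, and verify that, taken together with the Type I range from Lemma \ref{Type I remainder}, it still covers the full bilinear regime relevant to the error target $x^{1/2 + \gamma_\mathcal{B}/2}(\log x)^{-A}$ for every $|\mathcal{B}| \leq 3$. The threshold $|\mathcal{B}| = 3$ is exactly the value at which Type I and Type II coverage first becomes tight, and checking this numerically is a matter of comparing two explicit exponents.

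Computing the main term: the singular series $C$ and the $P$-sifted factor $\frac{e^{-\gamma}}{\log P}$ are independent of the digit condition, since both arise from sieving out primes $p \leq P$ with $p \nmid 10$. The only modification is at $p \in \{2, 5\}$, where one must determine the relative density of $\mathcal{A}'$ inside the residues coprime to $10$. A direct count gives
\[
\lim_{k \to \infty} \frac{\#\{\ell < 10^k : \ell \in \mathcal{A}',\ (\ell,10)=1\}}{\#\{\ell < 10^k : \ell \in \mathcal{A}'\}} = \frac{\varphi(10) - (|\mathcal{B}| - |\{a \in \mathcal{B} : (a,10) \neq 1\}|)}{10 - |\mathcal{B}|},
\]
since removing a digit coprime to $10$ removes one residue from the coprime-to-$10$ count, while removing a digit sharing a factor with $10$ does not. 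Multiplication by $10/\varphi(10)$ yields the stated $\kappa_\mathcal{B}$, and one checks the reduction $\kappa_\mathcal{B} = \kappa_1$ when $|\mathcal{B}| = 1$.

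The main obstacle I expect is the verification in the second paragraph: pinning down the explicit Type II range from the proof of Theorem \ref{main theorem} and checking that the combined Type I/Type II coverage remains valid through $|\mathcal{B}| = 3$. This requires careful bookkeeping of exponents in the Hecke-$L$-function excursion, because the wider one wants the Type II range, the finer the zero-free region one must invoke, and the target error now sits at $x^{1/2 + \gamma_\mathcal{B}/2}$ rather than $x^{1/2 + \gamma_0/2}$. Outside of this accounting, the proof is a faithful repetition of the $|\mathcal{B}| = 1$ argument with $\gamma_0$ and $\kappa_1$ replaced by $\gamma_\mathcal{B}$ and $\kappa_\mathcal{B}$.
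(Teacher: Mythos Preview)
Your main-term computation of $\kappa_\mathcal{B}$ is correct, and you are right that the argument is a near-verbatim rerun of the proof of Theorem~\ref{main theorem}. But there is a real gap in your plan for the Type~II sums.

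You propose to record the Type~II interval from the proof of Theorem~\ref{main theorem} and check that it still suffices, with the anticipated bottleneck being the zero-free region for Hecke $L$-functions. This misidentifies the source of the constraint. The Hecke $L$-function input in Section~\ref{sec: endgame} gives savings of size $\exp(-c(\log N)^{1/2-\epsilon})$ and is essentially insensitive to the digit structure; it is not what fixes the upper endpoint $N \le x^{25/77-\epsilon}$ in \eqref{restrictions on N}. That endpoint comes instead from the repeated use of Lemmas~\ref{maynard single denominator} and~\ref{maynard type I} throughout Sections~\ref{sec: congruence exercises} and~\ref{sec: polar boxes} to control the distribution of $\mathcal{A}$ in progressions to the large modulus $|\Delta|$ (see e.g.\ \eqref{eq: nonzero freqs small Delta} and the choice \eqref{choice for D}). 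The exponent $50/77$ there is $1-\alpha$ with $\alpha = 27/77$ the $L^1$ exponent for $F_Y$; when $\mathcal{A}$ is replaced by $\mathcal{A}'$, this exponent changes, and the admissible range for $N$ becomes $x^{1/2-\gamma_\mathcal{B}/2+\epsilon} \le N \le x^{(1-\alpha_d)/2-\epsilon}$ with a \emph{new} $\alpha_d$ depending on $|\mathcal{B}|$.

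The missing step, then, is to establish analogues of Lemmas~\ref{maynard single denominator} and~\ref{maynard type I} for $F_{Y,d}$. This is what the paper actually does: it adapts Maynard's transfer-matrix argument (Lemma~\ref{lem: matrix moment}) and computes the relevant largest eigenvalues numerically to obtain $\alpha_2 = 54/125$ and $\alpha_3 = 99/200$. The crucial inequality is $\alpha_d < \tfrac{1}{2}$, which ensures the Type~II range is nonempty and, combined with the Type~I level $x^{\gamma_d}$, that $(\gamma_d - \alpha_d)/2 > 1 - \gamma_d$ so that suitable $U,V$ exist in Vaughan's identity. For $|\mathcal{B}| \ge 4$ one can have $\alpha_d > \tfrac{1}{2}$, which is why the theorem stops at three missing digits. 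Your proposal omits this Fourier-analytic step entirely; without it the Type~II estimate does not go through for $|\mathcal{B}| \ge 2$.
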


\begin{rmk}
When $|\mathcal{B}| = 3$, Theorem \ref{second theorem} shows the existence of primes in a set of integers of size $\ll x^{\frac{1}{2}+\frac{1}{2}\frac{\log 7}{\log 10}} \approx x^{0.9225}$. One may take $|\mathcal{B}|$ to be larger by using a more complicated sieve argument and imposing extra conditions on the elements of $\mathcal{B}$, but we do not pursue this here.
\end{rmk}

Throughout the paper we make use of asymptotic notation $\ll, \gg, O(\cdot)$, and $o(\cdot)$. We write $f \asymp g$ if $f \ll g$ and $f \gg g$. Usually the implied constants are absolute, but from section \ref{sec: first bilinear section} onward we allow the implied constants to depend on $L$ (see \eqref{defn of theta}) without indicating this in the notation. A subscript such as $f \ll_\epsilon g$ means the implied constant depends on $\epsilon$.

We use the convention that $\epsilon$ denotes an arbitrarily small positive quantity that may vary from one occurrence to the next. Thus, we may write $x^{\epsilon + o(1)} \leq x^\epsilon$, for example, with no difficulties.

In order to economize on space, we often write the congruence $n \equiv v \pmod{d}$ as $n \equiv v (d)$. The notation $n \mid m^\infty$ means there is some positive integer $N$ such that $n$ divides $m^N$. We use the symbol $\star$ to denote Dirichlet convolution.

We write $\varphi$ for the Euler totient function, and $P^+(n),P^-(n)$ for the largest and smallest prime factors of $n$, respectively.

\section{Initial manipulations and outline}\label{sec: outline}

We begin the proof of Theorem \ref{main theorem} by setting out to estimate
\begin{align*}
S(x) := \sum_{n \leq x} a(n) \Lambda(n),
\end{align*}
where
\begin{align*}
a(n) := \mathop{\sum \sum}_{\substack{m^2 + \ell^2 = n \\ (\ell,\Pi)=1}} \mathbf{1}_\mathcal{A}(\ell).
\end{align*}
In the definition of $a(n)$ we allow $m$ to range over both positive and negative integers. 

Let $U,V > 2$ be real parameters to be chosen later (see \eqref{sample choices of U,V}). For an arithmetic function $f: \mathbb{N}\rightarrow \mathbb{C}$ and $W \geq 1$, define
\begin{align*}
f_{\leq W} (n)  := 
\begin{cases}
f(n), \ &n \leq W, \\
0, \ &n > W,
\end{cases}
\end{align*}
and write $f_{>W} = f - f_{\leq W}$. Then Vaughan's identity is
\begin{align}\label{vaughans identity}
\Lambda &= \Lambda_{\leq U} + \mu_{\leq V} \star \log - \Lambda_{\leq U} \star \mu_{\leq V} \star 1 + \Lambda_{> U} \star \mu_{>V} \star 1.
\end{align}
The different pieces of Vaughan's identity decompose $S(x)$ into several sums, which we handle with different techniques. The first term $\Lambda_{\leq U}$ we treat trivially, since we may choose $U$ to be small compared to $x$. The terms $\mu_{\leq V} \star \log$ and $\Lambda_{\leq U} \star \mu_{\leq V} \star 1$ are Type I sums, and require estimation of the congruence sums
\begin{align*}
A_d(x) &:= \sum_{\substack{n \leq x \\ n \equiv 0 (d)}} a(n), \\
A_d'(x) &:= \sum_{\substack{n \leq x \\ n \equiv 0 (d)}} a(n) \log n.
\end{align*}
The last term $\Lambda_{> U} \star \mu_{>V} \star 1$ gives rise to a Type II or ``bilinear'' sum, and the estimation of this sum requires much more effort than estimating the Type I sums.

Let us carry out this decomposition explicitly. Inserting \eqref{vaughans identity} into the definition of $S(x)$ gives
\begin{align}\label{eq: decompose S by Vaughan}
S(x) &= \sum_{n \leq x} a(n) \Lambda(n) = \sum_{n \leq U} a(n) \Lambda(n) + \sum_{n \leq x} a(n) (\mu_{\leq V}\star\log)(n) \\ 
&-\sum_{n \leq x} a(n) (\mu_{\leq V} \star \Lambda_{\leq U}\star 1)(n) + \sum_{n \leq x} a(n) (\mu_{> V} \star \Lambda_{> U}\star 1)(n). \nonumber
\end{align}
By trivial estimation
\begin{align*}
\sum_{n \leq U} a(n) \Lambda(n) &\leq (\log U)\sum_{n \leq U} a(n) = (\log U) \mathop{\sum \sum}_{\substack{m^2 + \ell^2 \leq U \\ (\ell,\Pi)=1}} \mathbf{1}_\mathcal{A}(\ell) \\
&\leq (\log U) \left(\sum_{|m| \leq U^{1/2}} 1 \right) \left(\sum_{\ell \leq U^{1/2}} \mathbf{1}_\mathcal{A}(\ell) \right) \ll (\log U) U^{\frac{1}{2} + \frac{\gamma_0}{2}},
\end{align*}
the last inequality following by \eqref{eq: number of elts of cal A}. In what follows we shall have many occasions to use the bound
\begin{align*}
\sum_{n \leq z} \left(\mathop{\sum \sum}_{m^2 + \ell^2 = n} \mathbf{1}_\mathcal{A}(\ell) \right) \ll z^{\frac{1}{2} + \frac{\gamma_0}{2}},
\end{align*}
and we do so without further comment.

For the second sum in \eqref{eq: decompose S by Vaughan} we interchange the order of summation and separate the logarithmic factors to obtain
\begin{align*}
\sum_{n \leq x} a(n) (\mu_{\leq V}\star\log)(n) &= \sum_{d \leq V} \mu(d) \sum_{n \leq x} a(n) \log (n/d) \\ 
&= \sum_{d \leq V} \mu(d) A_d'(x) - \sum_{d \leq V} \mu(d) (\log d) A_d(x).
\end{align*}
We similarly show that the third sum is
\begin{align*}
-\sum_{n \leq x} a(n) (\mu_{\leq V} \star \Lambda_{\leq U}\star 1)(n) &= -\sum_{d \leq V} \sum_{m \leq U} \mu(d) \Lambda(m) A_{dm}(x).
\end{align*}

For the last sum in \eqref{eq: decompose S by Vaughan}, the Type II sum, we interchange the order of summation and change variables to obtain
\begin{align*}
\sum_{n \leq x}a(n) (\mu_{>V} \star \Lambda_{>U} \star 1)(n) &= \mathop{\sum \sum}_{\substack{mn \leq x \\ n > V \\ m > U}} \mu(n) (\Lambda_{>U} \star 1)(m) a(mn) \\
&= \sum_{U < m \leq x/V} (\Lambda_{>U} \star 1)(m) \sum_{V < n \leq x/m} \mu(n) a(mn).
\end{align*}

In short,
\begin{align}\label{vaughan decomposition}
S(x) &= A(x;U,V) + B(x;U,V) + O((\log U) U^{\frac{1}{2} + \frac{\gamma_0}{2}}),
\end{align}
where
\begin{align}\label{vaughan type I}
A(x;U,V) &:= \sum_{d \leq V} \mu(d) \left(A_d'(x) - A_d(x) \log d - \sum_{m \leq U} \Lambda(m) A_{dm}(x) \right)
\end{align}
and
\begin{align}\label{vaughan Type II}
B(x;U,V) &:= \sum_{U < m \leq x/V} (\Lambda_{>U} \star 1)(m) \sum_{V < n \leq x/m} \mu(n) a(mn).
\end{align}
We can exchange $A_d'(x)$ in $A(x;U,V)$ for quantities involving $A_d(t)$ using partial summation:
\begin{align}\label{type I vaughan part sum}
A_d'(x) &= A_d(x) \log x - \int_1^x A_d(t) \frac{dt}{t}.
\end{align}

Define
\begin{align*}
M_d(x) &:= \frac{1}{d} \sum_{n \leq x}  a_d(n),
\end{align*}
where
\begin{align*}
a_d(n) &:= \mathop{\sum \sum}_{\substack{m^2 + \ell^2 = n \\ (\ell,\Pi)=1}} \mathbf{1}_\mathcal{A}(\ell) \rho_\ell(d)
\end{align*}
and $\rho_\ell(d)$ denotes the number of solutions $\nu$ to $\nu^2 + \ell^2 \equiv 0 \pmod{d}$. We expect that $M_d(x)$ is a good approximation to $A_d(x)$, at least on average. We therefore define the remainder terms
\begin{align}\label{eq: defn of remainders}
R_d(x) &:= A_d(x) - M_d(x), \ \ \ \ \ \ \ \ R(x,D) := \sum_{d \leq D} |R_d(x)|.
\end{align}
Inserting \eqref{type I vaughan part sum} into \eqref{vaughan type I} and writing $A_d(x) = M_d(x) + R_d(x)$, we obtain
\begin{align}\label{type i approx main and error}
A(x;U,V) = M(x;U,V) + R(x;U,V),
\end{align}
where
\begin{align}\label{eq: defn of MxUV}
M(x;U,V) &= \sum_{n \leq x} \sum_{d \leq V} \frac{\mu(d)}{d} \left(a_d(n) \log(n/d) - \sum_{m \leq U} \frac{\Lambda(m)}{m}a_{dm}(n) \right)
\end{align}
and
\begin{align}\label{eq: defn of RxUV}
R(x;U,V) &= \sum_{d \leq V} \mu(d) \left(R_d(x) \log(x/d) - \int_1^x R_d(t) \frac{dt}{t} - \sum_{m \leq U} \Lambda(m)R_{md}(x) \right).
\end{align}
This completes our preliminary manipulations of $S(x)$.

The outline of the rest of the paper is as follows. In section \ref{sec: sieve remainder} we show that $R(x;U,V)$ contributes only to the error term in Theorem \ref{main theorem}. The analysis in section \ref{sec: sieve main term, before fund} gives a partial analysis of $M(x;U,V)$, showing that, up to the condition $(\ell,\Pi)=1$, the term $M(x;U,V)$ yields the main term of Theorem \ref{main theorem}. We use the fundamental lemma of sieve theory to remove this condition in section \ref{sec: sieve main, using fund}, and this yields the desired main term. 

We estimate the bilinear form $B(x;U,V)$ in sections \ref{sec: first bilinear section} through \ref{sec: endgame}. In section \ref{sec: first bilinear section} we perform some technical reductions like separating variables. These reductions allow us to enter the Gaussian domain $\mathbb{Z}[i]$ in section \ref{sec: bilinear form transformations}. A congruence modulo $\Delta$ arises, and this introduces further complications. We address many of these in section \ref{sec: congruence exercises}. A particularly delicate issue is that $\mathcal{A}$ is not well-distributed in arithmetic progressions modulo $\Delta$ when $\Delta$ shares a factor with 10. At the end of section \ref{sec: polar boxes} we are mostly able to remove the congruence modulo $\Delta$, which simplifies our working considerably. With the congruence removed we devote section \ref{sec: endgame} to extracting cancellation from the sign changes of the M\"obius function using the theory of Hecke $L$-functions. Theorem \ref{main theorem} follows from \eqref{bound for sieve remainder}, \eqref{mxuv mt, removed f ell, before fund}, \eqref{mt after fund lemma}, \eqref{intermed summary}, and Proposition \ref{main proposition}.

In the last section, section \ref{sec: mods for other theorem}, we show how to modify the proof of Theorem \ref{main theorem} to prove Theorem \ref{second theorem}.

\section{The sieve remainder term}\label{sec: sieve remainder}

Our goal in this section is to show that
\begin{align}\label{bound for sieve remainder}
R(x;U,V) \ll x^{\frac{1}{2} + \frac{\gamma_0}{2} - \epsilon},
\end{align}
provided $U,V > 2$ and $UV \leq x^{\gamma_0 - \epsilon}$.

Applying the triangle inequality to \eqref{eq: defn of RxUV}, we get
\begin{align}\label{triangle bound on vaughan type I error}
|R(x;U,V)| &\ll (\log x) R(x,UV) + \int_1^x R(t,V) \frac{dt}{t}.
\end{align}
The following is the key result we use to estimate remainder terms.
\begin{lemma}\label{Type I remainder}
For $1 \leq D \leq x$ and $\epsilon > 0$ we have
\begin{align*}
R(x,D) &=\sum_{d \leq D} |R_d(x)| \ll D^{\frac{1}{4}} x^{\frac{1}{2} + \frac{\gamma_0}{4} + \epsilon},
\end{align*}
the implied constant depending only on $\epsilon$.
\end{lemma}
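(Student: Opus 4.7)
My plan is to reduce the lemma to a direct application of the Fouvry--Iwaniec Type I estimate for the binary form $x^2 + y^2$, of the type used in their work on Gaussian primes and in the $p = x^2 + y^4$ paper.

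First I analyze the inner $m$-sum pointwise in $\ell$. For fixed $\ell$ with $\mathbf{1}_\mathcal{A}(\ell) = 1$ and $(\ell, \Pi) = 1$, put $Y_\ell = \sqrt{\max(x - \ell^2, 0)}$. Splitting the count by residues $\nu \pmod d$ satisfying $\nu^2 + \ell^2 \equiv 0 \pmod d$ gives
\begin{align*}
\#\{m \in \mathbb{Z} : |m| \leq Y_\ell,\ d \mid m^2 + \ell^2\} = \rho_\ell(d) \frac{2Y_\ell + 1}{d} + e_d(\ell), \qquad |e_d(\ell)| \leq \rho_\ell(d).
\end{align*}
The main term here matches the contribution of $\ell$ to $M_d(x)$, so after summing on $\ell$ one has $R_d(x) = \sum_\ell \mathbf{1}_\mathcal{A}(\ell) \mathbf{1}_{(\ell,\Pi)=1} e_d(\ell)$.

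To extract cancellation in the $d$-sum I would apply Poisson summation in $m$, writing $e_d(\ell)$ as a short Fourier expansion involving Gauss-like sums $G_d(h,\ell) = \sum_{\nu^2 + \ell^2 \equiv 0 (d)} e(h\nu/d)$ weighted by factors $h^{-1}\sin(2\pi h Y_\ell/d)$. Inserting signs $\eta_d \in \{\pm 1\}$ with $|R_d(x)| = \eta_d R_d(x)$ and applying Cauchy--Schwarz (to shift the absolute value from $d$ onto $\ell$) reduces matters to an off-diagonal second moment of the shape $\sum_{d_1, d_2 \leq D} \eta_{d_1}\eta_{d_2} \sum_\ell \mathbf{1}_\mathcal{A}(\ell) G_{d_1}(h_1,\ell) \overline{G_{d_2}(h_2, \ell)}$.

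The principal obstacle is estimating this bilinear form, which is precisely what the Fouvry--Iwaniec machinery addresses: one factors the moduli via CRT, passes to $\mathbb{Z}[i]$ (where $\rho_\ell(p) = 1 + \chi(p)$ is interpreted via the splitting behaviour of $p$), and bounds the resulting Kloosterman-type sums with Gaussian moduli using a Weil-type estimate. The factor $D^{1/4}$ represents $D^{-3/4}$ savings over the trivial estimate, and $x^{1/2 + \gamma_0/4}$ factors as $Y \cdot L^{1/2}$ with $Y = \sqrt{x}$ the range of $m$ and $L = x^{\gamma_0/2}$ the effective number of admissible $\ell$ after Cauchy--Schwarz. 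Since the hard bounds already reside in the cited Fouvry--Iwaniec papers, the main verification needed is that our weight $\mathbf{1}_\mathcal{A}(\ell)\mathbf{1}_{(\ell,\Pi)=1}$ (which is pointwise bounded by $1$) fits into their framework, which permits arbitrary bounded weights on one of the two variables.
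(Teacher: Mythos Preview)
Your proposal is correct and follows essentially the same approach as the paper: both reduce the lemma to a direct application of the Fouvry--Iwaniec Type~I estimate (specifically \cite[Lemma~4]{foiw}), checking that the weight $\lambda_\ell = \mathbf{1}_\mathcal{A}(\ell)\mathbf{1}_{(\ell,\Pi)=1}$ fits their framework. The paper's proof is considerably more terse---it simply cites the Fouvry--Iwaniec lemma as a black box and computes $\|\lambda\| = \bigl(\sum_{\ell \le x^{1/2}} \mathbf{1}_\mathcal{A}(\ell)\bigr)^{1/2} \ll x^{\gamma_0/4}$, whereas you sketch the internal Poisson/Cauchy--Schwarz/Kloosterman mechanism that lies inside that lemma; but this is expository rather than a genuinely different route.
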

\begin{proof}
This is a specialization of \cite[Lemma 4]{foiw}. In the notation of \cite{foiw} we take $\lambda_{\ell} = \mathbf{1}_\mathcal{A}(\ell)$ for $\ell \leq x^{1/2}$. We then observe that
\begin{align*}
\| \lambda\| &\leq \left(\sum_{\ell \leq x^{1/2}} \mathbf{1}_\mathcal{A}(\ell) \right)^{1/2} \ll x^{\frac{\gamma_0}{4}},
\end{align*}
the last inequality following by \eqref{eq: number of elts of cal A}.
\end{proof}

With Lemma \ref{Type I remainder} in hand we can show the contribution from \eqref{triangle bound on vaughan type I error} is sufficiently small. The contribution from $R(x,UV)$ is negligible provided
\begin{align}\label{upper bound on UV}
UV \leq x^{\gamma_0 - \delta},
\end{align}
where $\delta > 0$ is any small fixed quantity. We henceforth assume \eqref{upper bound on UV}. We can also immediately estimate the part of the integral with $t \geq V$:
\begin{align}\label{eq: remainder large t}
\int_V^x R(t,V) \frac{dt}{t} \ll \int_V^x V^{\frac{1}{4}} t^{\frac{1}{2} + \frac{\gamma_0}{4} + \epsilon} \frac{dt}{t} \ll V^{\frac{1}{4}} x^{\frac{1}{2} + \frac{\gamma_0}{4} + \epsilon}.
\end{align}
This is sufficiently small provided $V \leq x^{\gamma_0 - \delta}$, which already follows from \eqref{upper bound on UV} since $U > 2$. To show \eqref{bound for sieve remainder} it therefore suffices to prove
\begin{align}\label{eq: remainder small t}
\int_1^V R(t,V) \frac{dt}{t} \ll V^{\frac{1}{2} + \frac{\gamma_0}{2} + \epsilon}.
\end{align}
We write
\begin{align*}
R(t,V) &= \sum_{d \leq V} |R_d(t)| \leq \sum_{d \leq V}\left(A_d(t) + M_d(t) \right)
\end{align*}
and estimate the sums involving $A_d$ and $M_d$ separately. 

For the term involving $A_d$ we use the divisor bound to obtain
\begin{align}\label{sum of Ad}
\sum_{d \leq V} A_d(t) &\leq \sum_{d \leq V} \mathop{\sum \sum}_{\substack{m^2 + \ell^2 \leq t \\ m^2 + \ell^2 \equiv 0 (d)}} \mathbf{1}_\mathcal{A}(\ell) \leq \mathop{\sum \sum}_{\substack{m^2 + \ell^2 \leq t}} \mathbf{1}_\mathcal{A}(\ell) \tau(m^2 + \ell^2) \\ \nonumber
&\ll t^\epsilon \mathop{\sum \sum}_{\substack{m^2 + \ell^2 \leq t}} \mathbf{1}_\mathcal{A}(\ell) \ll t^{\frac{1}{2} + \frac{\gamma_0}{2} + \epsilon}. \nonumber
\end{align}

The estimation of the term involving $M_d$ is slightly more complicated due to the presence of the function $\rho_\ell(d)$. Recall that $\rho_\ell(d)$ counts the number of residue classes $\nu \pmod{d}$ such that $\nu^2 + \ell^2 \equiv 0 \pmod{d}$. If $\ell$ is coprime to $d$, then we can divide both sides of the congruence by $\ell^2$ and we find that $\rho_\ell(d) = \rho(d)$, where $\rho(d)$ counts the number of solutions to $\nu^2 +1 \equiv 0 \pmod{d}$. In general, a slightly more complicated relationship holds.
\begin{lemma}\label{lemma about rho ell d}
Let $\ell,d$ be positive integers. Let $r(d)$ denote the largest integer $r$ such that $r^2 \mid d$. Then
\begin{align*}
\rho_\ell(d) = (r(d),\ell) \rho (d/(d,\ell^2)).
\end{align*}
\end{lemma}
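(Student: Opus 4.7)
The plan is to reduce everything to prime powers via the Chinese Remainder Theorem and then do a small valuation computation at each prime.

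First I would observe that all three quantities appearing in the claim are multiplicative in $d$: $\rho_\ell$ is multiplicative by CRT (counting solutions of $\nu^2 \equiv -\ell^2$ modulo coprime moduli splits into a product); $r(d)$ is multiplicative since writing $d = \prod p^{k_p}$ gives $r(d) = \prod p^{\lfloor k_p/2\rfloor}$; $\rho$ is multiplicative; and $(d,\ell^2)$ and $(r(d),\ell)$ both factor over primes. So it suffices to verify the identity when $d = p^k$ is a prime power.

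Fix a prime $p$ and write $\ell = p^a m$ with $(m,p)=1$ and $a = v_p(\ell) \geq 0$. I would split on whether $2a \geq k$ or $2a < k$. In the first case, $\ell^2 \equiv 0 \pmod{p^k}$, so we must count $\nu \pmod{p^k}$ with $p^k \mid \nu^2$, equivalently $p^{\lceil k/2\rceil} \mid \nu$, giving $p^{\lfloor k/2\rfloor}$ solutions; meanwhile $(p^k,\ell^2) = p^k$, so $d/(d,\ell^2) = 1$, $\rho(1) = 1$, and $(r(d),\ell) = (p^{\lfloor k/2\rfloor}, p^a m) = p^{\lfloor k/2\rfloor}$ since $a \geq k/2 \geq \lfloor k/2\rfloor$, so the formula matches.

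In the second case $2a < k$, I would compare $p$-adic valuations on both sides of $\nu^2 \equiv -p^{2a}m^2 \pmod{p^k}$: since $p \nmid m$, any solution must have $v_p(\nu) = a$ exactly, so write $\nu = p^a \nu'$ with $\nu'$ ranging modulo $p^{k-a}$. The congruence becomes $\nu'^2 \equiv -m^2 \pmod{p^{k-2a}}$, which automatically forces $p\nmid \nu'$ and has $\rho_m(p^{k-2a}) = \rho(p^{k-2a})$ solutions modulo $p^{k-2a}$; each such class lifts to $p^a$ residues modulo $p^{k-a}$ (and hence to $p^a$ values of $\nu$ mod $p^k$), for a total of $p^a \rho(p^{k-2a})$. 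On the other side, $(p^k,\ell^2) = p^{2a}$ so $d/(d,\ell^2) = p^{k-2a}$, while $(r(d),\ell) = p^{\min(\lfloor k/2\rfloor, a)} = p^a$ because $a < k/2$; the two expressions agree.

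The only real subtlety is the valuation step in the second case (showing $v_p(\nu) = a$ is forced, and then counting lifts correctly between moduli $p^{k-2a}$, $p^{k-a}$, and $p^k$); everything else is bookkeeping. With both cases verified at prime powers and multiplicativity established, the identity follows.
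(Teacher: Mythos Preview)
Your proof is correct: the reduction to prime powers via multiplicativity is valid, and your two-case valuation analysis at $d=p^k$ is clean and complete (in particular, the forcing of $v_p(\nu)=a$ when $2a<k$ and the lift-counting from $p^{k-2a}$ to $p^{k-a}$ are handled properly). The paper itself does not give an argument at all---it simply cites Fouvry--Iwaniec \cite[(3.4)]{foiw}---so you have supplied a self-contained proof where the paper defers to the literature.
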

\begin{proof}
See \cite[(3.4)]{foiw}.
\end{proof}
Observe that Lemma \ref{lemma about rho ell d} implies 
\begin{align*}
\rho_\ell(d) \leq \rho(d) \leq \tau(d)
\end{align*}
whenever $d$ is squarefree or coprime to $\ell$. If $p$ divides $\ell$, then 
\begin{align*}
\rho_\ell(p^e) \leq 2 p^{e/2}.
\end{align*}
The following lemma illustrates how we estimate sums involving $\rho_\ell$.
\begin{lemma}\label{lem: sums of rho ell}
Let $y \geq 2$, and let $\ell$ be an integer. Then
\begin{align*}
\sum_{n \leq y} \frac{\rho_\ell(n)}{n} \ll (\log y)^2 \prod_{p \mid \ell} \left(1 + \frac{7}{p^{1/2}} \right).
\end{align*}
\end{lemma}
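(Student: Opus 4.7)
The plan is to exploit the multiplicativity of $n\mapsto \rho_\ell(n)$ (which follows by CRT from its definition as a congruence count) together with the fact from the remark preceding the lemma that $\rho_\ell(m) = \rho(m)$ whenever $(m,\ell)=1$. For each $n$ appearing in the sum I factor $n = n_1 n_2$ uniquely with $(n_1,\ell)=1$ and $\mathrm{rad}(n_2)\mid \ell$. Then $\rho_\ell(n) = \rho(n_1)\,\rho_\ell(n_2)$, and dropping the constraint $n_1 n_2 \leq y$ in favor of the two separate constraints $n_1\leq y$, $n_2\leq y$ yields
\begin{align*}
\sum_{n\leq y}\frac{\rho_\ell(n)}{n} \;\leq\; \Bigl(\sum_{\substack{n_1\leq y\\(n_1,\ell)=1}}\frac{\rho(n_1)}{n_1}\Bigr)\Bigl(\sum_{\substack{n_2\leq y\\\mathrm{rad}(n_2)\mid \ell}}\frac{\rho_\ell(n_2)}{n_2}\Bigr).
\end{align*}

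For the first factor I use the elementary pointwise bound $\rho(n_1)\leq \tau(n_1)$ (since $\rho(p^e)\leq 2$ for every prime power) together with the classical estimate $\sum_{n\leq y}\tau(n)/n \ll (\log y)^2$; this produces the $(\log y)^2$ in the target bound. (A sharper $\log y$ is available via Mertens applied to $p\equiv 1\pmod 4$, but $(\log y)^2$ is all we need.)

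For the second factor I invoke Lemma \ref{lemma about rho ell d}. At a prime $p\mid \ell$ with $p^a \| \ell$, the factorization $\rho_\ell(p^e) = (r(p^e),\ell)\,\rho(p^e/(p^e,\ell^2))$ and the identity $r(p^e) = p^{\lfloor e/2\rfloor}$ give $(r(p^e),\ell) = p^{\min(\lfloor e/2\rfloor,\,a)}\leq p^{e/2}$, while $\rho(p^{\max(0,e-2a)})\leq 2$ (this elementary fact is the one place the argument really touches $\rho$; it holds for odd $p$ immediately and for $p=2$ by direct computation). Hence $\rho_\ell(p^e)\leq 2p^{e/2}$ uniformly in $e$. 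Expanding as an Euler product and summing the geometric series:
\begin{align*}
\sum_{\substack{n_2\\ \mathrm{rad}(n_2)\mid \ell}}\frac{\rho_\ell(n_2)}{n_2} \;\leq\; \prod_{p\mid \ell}\sum_{e=0}^{\infty}\frac{\rho_\ell(p^e)}{p^e}\;\leq\; \prod_{p\mid \ell}\Bigl(1+\frac{2}{p^{1/2}-1}\Bigr)\;\leq\;\prod_{p\mid \ell}\Bigl(1+\frac{7}{p^{1/2}}\Bigr),
\end{align*}
the last inequality being the direct check that $2p^{1/2}\leq 7(p^{1/2}-1)$ for $p\geq 2$. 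Multiplying the two bounds gives the lemma.

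The main (and essentially only) obstacle is establishing the uniform estimate $\rho_\ell(p^e)\leq 2p^{e/2}$ for $p\mid \ell$, which is the reason for routing through Lemma \ref{lemma about rho ell d}; once that is in hand the Euler product bound is entirely mechanical. Everything else—multiplicativity, the factorization $n=n_1n_2$, and the divisor sum estimate $\sum \tau(n)/n\ll (\log y)^2$—is classical.
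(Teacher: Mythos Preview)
Your proof is correct and follows essentially the same approach as the paper: factor $n=n_1n_2$ with $(n_1,\ell)=1$ and $n_2\mid\ell^\infty$, bound the coprime part by $\sum_{m\leq y}\tau(m)/m\ll(\log y)^2$, and bound the $\ell$-part via the Euler product using $\rho_\ell(p^e)\leq 2p^{e/2}$ to get $\prod_{p\mid\ell}(1+2/(p^{1/2}-1))\leq\prod_{p\mid\ell}(1+7/p^{1/2})$. The paper's argument is identical in structure and in the constants obtained.
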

\begin{proof}
We factor $n$ as $n = em$, where $e \mid \ell^\infty$ and $m$ is coprime to $\ell$. By multiplicativity and Lemma \ref{lemma about rho ell d} we obtain
\begin{align*}
\sum_{n \leq y} \frac{\rho_\ell(n)}{n} &\leq \sum_{e \mid \ell^\infty} \frac{\rho_\ell(e)}{e} \sum_{\substack{m \leq y \\ (m,\ell)=1}} \frac{\rho_\ell(m)}{m} \leq \sum_{e \mid \ell^\infty} \frac{\rho_\ell(e)}{e} \sum_{\substack{m \leq y \\ (m,\ell)=1}} \frac{\tau(m)}{m} \ll (\log y)^2 \sum_{e \mid \ell^\infty} \frac{\rho_\ell(e)}{e}.
\end{align*}
We use multiplicativity and Lemma \ref{lemma about rho ell d} again to obtain
\begin{align*}
\sum_{e \mid \ell^\infty} \frac{\rho_\ell(e)}{e} &= \prod_{p \mid \ell} \left(\sum_{j=0}^\infty \frac{\rho_\ell(p^j)}{p^j} \right) \leq \prod_{p \mid \ell} \left(1 + 2\sum_{j=1}^\infty \frac{1}{p^{j/2}} \right) = \prod_{p \mid \ell} \left(1 + \frac{2}{p^{1/2}-1}\right) \\
&\leq \prod_{p \mid \ell} \left(1 + \frac{7}{p^{1/2}}\right).
\end{align*}
\end{proof}

By the definition of $M_d(t)$ we find
\begin{align*}
\sum_{d \leq V} M_d(t) &\leq \mathop{\sum \sum}_{m^2 + \ell^2 \leq t} \mathbf{1}_\mathcal{A}(\ell) \sum_{d \leq V} \frac{\rho_\ell(d)}{d}.
\end{align*}
We apply Lemma \ref{lem: sums of rho ell} and obtain
\begin{align}\label{sum of Md}
\sum_{d \leq V} M_d(t) &\ll \mathop{\sum \sum}_{m^2 + \ell^2 \leq t} \mathbf{1}_\mathcal{A}(\ell) (\log V)^2 \tau(\ell) \ll t^{\frac{1}{2} + \frac{\gamma_0}{2}} (tV)^\epsilon,
\end{align}
and combining this with our bound \eqref{sum of Ad} yields \eqref{eq: remainder small t}.

\section{The sieve main term}\label{sec: sieve main term, before fund}
In this section we begin to show how $M(x;U,V)$ yields the main term for Theorem \ref{main theorem}: we show that $M(x;U,V)$ is equal to
\begin{align*}
\frac{4}{\pi} C \mathop{\sum \sum}_{\substack{m^2 + \ell^2 \leq x \\ (\ell,\Pi) = 1}} \mathbf{1}_\mathcal{A}(\ell),
\end{align*}
up to negligible error. The estimates involved are standard, but we give details for completeness.

From \eqref{eq: defn of MxUV} we derive
\begin{align}\label{eq: main term MxUV expanded}
M(x;U,V) = \mathop{\sum \sum}_{\substack{g^2 + \ell^2 \leq x \\ (\ell,\Pi) = 1}} \mathbf{1}_\mathcal{A}(\ell)\Bigg(&\log(g^2 + \ell^2) \sum_{d \leq V} \frac{\mu(d) \rho_\ell(d)}{d} - \sum_{d \leq V} \frac{\mu(d) \rho_{\ell}(d) \log d}{d} \\ 
&- \sum_{m \leq U} \frac{\Lambda(m)}{m} \sum_{d \leq V} \frac{\mu(d) \rho_\ell(dm)}{d}\Bigg).\nonumber
\end{align}
The main term arises from the second term on the right side of \eqref{eq: main term MxUV expanded}, and the other two terms contribute only to the error.

We begin by estimating
\begin{align*}
\sum_{d \leq V} \frac{\mu(d) \rho_\ell(d)}{d}
\end{align*}
uniformly in $\ell$. We note that
\begin{align*}
\rho_\ell(p) &=
\begin{cases}
1 + \chi(p), \ &p \nmid \ell, \\
1, \ \ \ \ \ \ \ \ \ \ &p \mid \ell,.
\end{cases}
\end{align*}
(Recall that $\chi$ is the nonprincipal character modulo 4.)
The prime number theorem in arithmetic progressions then gives
\begin{align*}
\sum_{p \leq z} \frac{\rho_\ell(p)}{p} = \log \log z + c_\ell + O_\ell( \exp(-c \sqrt{\log z})),
\end{align*}
for some constant $c_\ell$ depending on $\ell$. By \cite[(2.4)]{friediw1}, this implies
\begin{align}\label{eq: sum rho ell is zero}
\sum_{d = 1}^\infty\frac{\mu(d) \rho_\ell(d)}{d} = 0.
\end{align}
From \eqref{eq: sum rho ell is zero} and partial summation it follows that
\begin{align}\label{rho par sum}
&\sum_{d \leq V} \frac{\mu(d) \rho_\ell(d)}{d} = -\sum_{d > V} \frac{\mu(d) \rho_\ell(d)}{d} = \lim_{H \rightarrow \infty} \left(-\sum_{V < d \leq H} \frac{\mu(d) \rho_\ell(d)}{d} \right) \\ 
&= \lim_{H \rightarrow \infty} \left(-H^{-1} \sum_{d \leq H} \mu(d) \rho_\ell(d)  V^{-1} \sum_{d \leq V} \mu(d) \rho_\ell(d) +\int_V^H \frac{1}{t^2} \left(\sum_{d \leq t} \mu(d) \rho_\ell(d) \right)dt \right). \nonumber
\end{align}
We will show
\begin{align}\label{mob rho cancellation}
\sum_{d \leq z} \mu(d) \rho_\ell(d) \ll \prod_{p \mid \ell} \left(1 + \frac{26}{p^{2/3}} \right) \ z \exp(-c \sqrt{\log z}),
\end{align}
uniformly in $\ell$ and $z \geq 1$. The bound is trivial if $z$ is bounded, so we may suppose that $z$ is large.

Let $y = z \exp(-b \sqrt{\log z})$, where $b>0$ is a parameter to be chosen later. Let $g$ be a smooth function, supported in $[1/2,z]$, which is identically equal to one on $[y,z-y]$, and satisfies $g^{(j)} \ll_j y^{-j}$. Estimating trivially,
\begin{align}\label{mellin approx}
\sum_{d \leq z} \mu(d) \rho_\ell(d) &= O(y \log z) + \sum_{d} \mu(d) \rho_\ell(d) g(d).
\end{align}
Mellin inversion yields
\begin{align*}
\sum_{d} \mu(d) \rho_\ell(d) g(d) &= \frac{1}{2\pi i} \int_{(2)} \widehat{g}(s) \sum_{d=1}^\infty \frac{\mu(d) \rho_\ell(d)}{d^s} ds.
\end{align*}
From the derivative bounds on $g$ we find that the Mellin transform $\widehat{g}(s)$ satisfies
\begin{align}\label{mellin invert}
\widehat{g}(s) \ll z^\sigma \left(1 + (y/z)^2 t^2 \right)^{-1},
\end{align}
where $s = \sigma + it$ and $\sigma \geq \frac{2}{3}$, say.

An Euler product computation yields
\begin{align*}
\sum_{d=1}^\infty \frac{\mu(d) \rho_\ell(d)}{d^s} &= \zeta(s)^{-1} L(s,\chi)^{-1} H(s) f_s(\ell),
\end{align*}
where
\begin{align*}
H(s) &:= \prod_p \frac{1 - \frac{1 + \chi(p)}{p^s}}{\left(1 - \frac{1}{p^s} \right) \left(1 - \frac{\chi(p)}{p^s} \right)}
\end{align*}
is analytic in $\sigma \geq \frac{2}{3}$, say, and
\begin{align*}
f_s(\ell) &:= \prod_{p \mid \ell} \frac{1 - \frac{1}{p^s}}{1 - \frac{1 + \chi(p)}{p^s}} = \prod_{p \mid \ell} \left( 1 + \frac{\chi(p)}{p^s - 1 - \chi(p)} \right).
\end{align*}

We move the line of integration in \eqref{mellin invert} to $\sigma = 1 + \frac{1}{\log z}$ and estimate trivially the contribution from $|t| \geq T$, with $T$ a parameter to be chosen. This gives
\begin{align*}
\int_{|t| \geq T} &\ll (\log z)^{O(1)} \ \frac{z^3}{y^2T} \ \prod_{p \mid \ell} \left(1 + \frac{\chi^2(p)}{p-1-\chi^2(p)} \right) .
\end{align*}
For $|t| \leq T$ we move the line of integration to $\sigma = 1 - \frac{c}{\log T}$, where $c$ is chosen small enough that $\zeta(s)^{-1} L(s,\chi)^{-1}$ has no zeros in $\sigma \geq 1 - \frac{c}{\log T}, |t| \leq T$, and add in horizontal connecting lines. We estimate everything trivially to arrive at
\begin{align*}
\int_{|t| \leq T} &\ll z (\log z T)^{O(1)} \exp(2b \sqrt{\log z}) \prod_{p \mid \ell} \left(1 + \frac{\chi^2(p)}{p^{2/3} - 1 - \chi^2(p)} \right) \left(\frac{1}{T} + \exp \left(-c \frac{\log z}{\log T} \right) \right).
\end{align*}
We set $T = \exp(\sqrt{\log z})$, and take $b=\frac{c}{3}$. With a small amount of calculation we see that
\begin{align*}
\frac{\chi^2(p)}{p^{2/3} - 1 - \chi^2(p)} < \frac{26}{p^{2/3}},
\end{align*}
and this completes the proof of \eqref{mob rho cancellation}. 

The fact that $\ell$ is coprime to $\Pi$ implies
\begin{align*}
\prod_{p \mid \ell} \left(1 + \frac{26}{p^{2/3}} \right) \ll 1.
\end{align*}
From \eqref{rho par sum} we see that \eqref{mob rho cancellation} and $(\ell,\Pi)=1$ yield
\begin{align}\label{first term of mxuv}
\sum_{d \leq V} \frac{\mu(d) \rho_\ell(d)}{d} \ll \exp(-c \sqrt{\log V}).
\end{align}
This shows that the first term of \eqref{eq: main term MxUV expanded} satisfies the bound
\begin{align*}
\mathop{\sum \sum}_{\substack{g^2 + \ell^2 \leq x \\ (\ell,\Pi) = 1}} \mathbf{1}_\mathcal{A}(\ell)\log(g^2 + \ell^2) \sum_{d \leq V} \frac{\mu(d) \rho_\ell(d)}{d} \ll x^{\frac{1}{2} + \frac{\gamma_0}{2}} \exp (-c' \sqrt{\log x}),
\end{align*}
provided 
\begin{align*}
V \geq x^\delta
\end{align*}
for some absolute constant $\delta > 0$.

We turn to estimating
\begin{align*}
-\sum_{d \leq V} \frac{\mu(d) \rho_\ell(d) \log d}{d}.
\end{align*}
We add and subtract the quantity
\begin{align*}
\log V \sum_{d \leq V} \frac{\mu(d) \rho_\ell(d)}{d},
\end{align*}
which yields
\begin{align*}
-\sum_{d \leq V} \frac{\mu(d) \rho_\ell(d) \log d}{d} &= \sum_{d \leq V} \frac{\mu(d) \rho_\ell(d)}{d} \log (V/d) + O(\exp(-c \sqrt{\log V}))
\end{align*}
by \eqref{first term of mxuv}. From Perron's formula we obtain
\begin{align}\label{eq: mt log smoothing}
\sum_{d \leq V} \frac{\mu(d) \rho_\ell(d)}{d} \log (V/d) &= \frac{1}{2\pi i}\int_{(1)} \frac{x^s}{s^2} \sum_{d=1}^\infty \frac{\mu(d) \rho_\ell(d)}{d^{1+s}} ds.
\end{align}
An Euler product computation reveals
\begin{align*}
\sum_{d=1}^\infty \frac{\mu(d) \rho_\ell(d)}{d^{1+s}} &= \zeta(1+s)^{-1} L(1+s,\chi)^{-1} H(1+s) \prod_{p \mid \ell} \left( 1 + \frac{\chi(p)}{p^{1+s} - 1 - \chi(p)} \right).
\end{align*}
We proceed in nearly identical fashion to the proof of \eqref{mob rho cancellation}, but here there is a main term coming from the simple pole of the integrand in \eqref{eq: mt log smoothing} at $s = 0$. Since $L(1,\chi) = \frac{\pi}{4}$, we deduce
\begin{align}\label{partial main term}
-\sum_{d \leq V} \frac{\mu(d) \rho_\ell(d) \log d}{d} &= \frac{4}{\pi} \prod_{p \mid \ell} \left(1 + \frac{\chi(p)}{p - 1 - \chi(p)} \right) \prod_p \left(1 - \frac{\chi(p)}{(p-1) (p- \chi(p))} \right) \\ 
&+ O(\exp(-c \sqrt{\log V})). \nonumber
\end{align}
The expression in \eqref{partial main term} gives rise to the main term in Theorem \ref{main theorem}.

The last term of $M(x;U,V)$ we estimate similarly to the first. We aim to show that
\begin{align}\label{third term of mxuv}
\sum_{m \leq U} \frac{\Lambda(m)}{m} \sum_{d \leq V} \frac{\mu(d) \rho_\ell(dm)}{d} \ll (\log \ell V)^3 P^{-1/2}.
\end{align}
It is convenient to distinguish two cases for $d$: those $d$ that are coprime to $m$, and those that are not. If $d$ is not coprime to $m = p^k$, then the presence of the M\"obius function implies $d = ep$ with $(e,p)=1$. Therefore
\begin{align}\label{eq: MxUV mangoldt and mobius}
\sum_{m \leq U} \frac{\Lambda(m)}{m} \sum_{d \leq V} \frac{\mu(d) \rho_\ell(dm)}{d} &= \sum_{m \leq U} \frac{\Lambda(m)\rho_\ell(m)}{m} \sum_{\substack{d \leq V \\ (d,m)=1}} \frac{\mu(d) \rho_\ell(d)}{d} \\
&- \sum_{p^k \leq U} \frac{(\log p) \rho_\ell(p^{k+1})}{p^{k+1}} \sum_{\substack{e \leq V/p \\ (e,p)=1}} \frac{\mu(e) \rho_\ell(e)}{e}. \nonumber
\end{align}
It is not difficult to deal with the sum over $d$ in the first term of \eqref{eq: MxUV mangoldt and mobius} using an argument analogous to that which gave \eqref{first term of mxuv}, as the condition $(d,m) = 1$ causes no great complications. To bound the sum over $m$ we use Lemma \ref{lem: sums of rho ell}, obtaining
\begin{align*}
\sum_{m \leq U} \frac{\Lambda(m)\rho_\ell(m)}{m} &\leq \sum_{\substack{m \leq U \\ (m,\ell)=1}} \frac{\Lambda(m)\rho_\ell(m)}{m} + (\log U)\sum_{\substack{p^k \\ p \mid \ell}} \frac{\rho_\ell(p^k)}{p^k} \\
&\ll \log U + (\log U)\sum_{\substack{p^k \\ p \mid \ell}} \frac{p^{k/2}}{p^k} \ll \log U. \nonumber
\end{align*}
The last inequality follows since $p \mid \ell$ implies $p >  P$. Therefore
\begin{align}\label{third term of mxuv first part}
\sum_{m \leq U} \frac{\Lambda(m)\rho_\ell(m)}{m} \sum_{\substack{d \leq V \\ (d,m)=1}} \frac{\mu(d) \rho_\ell(d)}{d} \ll (\log U) \exp (-c \sqrt{\log V}).
\end{align}

We turn our attention to the second term of \eqref{eq: MxUV mangoldt and mobius}. We first remove those $p$ that are not coprime to $\ell$. By trivial estimation
\begin{align}\label{second term p divs ell}
\sum_{\substack{p^k \leq U \\ p \mid \ell}} \frac{(\log p) \rho_\ell(p^{k+1})}{p^{k+1}} \sum_{\substack{e \leq V/p \\ (e,p)=1}} \frac{\mu(e) \rho_\ell(e)}{e} &\ll (\log V)^2 \sum_{p \mid \ell} (\log p) \sum_{k=1}^\infty \frac{1}{p^{k/2}} \ll (\log \ell V)^3 P^{-1/2}
\end{align}
Here we have again used the fact that $P^-(\ell) > P$.

To handle those $p$ that are coprime to $\ell$, we assume that
\begin{align*}
U \geq x^\delta
\end{align*}
for some absolute constant $\delta > 0$. We then estimate trivially the contribution from $p > R = \exp(\sqrt{\log V})$. Observe that $R < U$. Then
\begin{align}\label{large p}
\sum_{\substack{p^k \leq U \\  p > R \\ (p,\ell)=1}} \frac{(\log p) \rho_\ell(p^{k+1})}{p^{k+1}} \sum_{\substack{e \leq V/p \\ (e,p)=1}} \frac{\mu(e) \rho_\ell(e)}{e} &\ll (\log V)^2 \sum_{p > R} \log p \sum_{k=2}^\infty \frac{k}{p^k} \\
&\ll (\log V)^2 \sum_{p > R} \frac{\log p}{p^2} \ll \frac{(\log V)^2}{R},\nonumber
\end{align}
and this is an acceptably small error. We may then show
\begin{align}\label{last slice of cake}
\sum_{\substack{p^k \leq U \\ p \leq R \\ (p,\ell)=1}} \frac{(\log p) \rho_\ell(p^{k+1})}{p^{k+1}} \sum_{\substack{e \leq V/p \\ (e,p)=1}} \frac{\mu(e) \rho_\ell(e)}{e} \ll \exp(-c \sqrt{\log V})
\end{align}
by arguing as before, since $V/p$ is close to $V$ in the logarithmic scale. Taking \eqref{third term of mxuv first part}, \eqref{second term p divs ell}, \eqref{large p}, and \eqref{last slice of cake} together gives \eqref{third term of mxuv}. We combine \eqref{first term of mxuv}, \eqref{partial main term}, and \eqref{third term of mxuv first part} to derive
\begin{align}\label{mxuv main term, before fund lemma}
M(x;U,V) &= \frac{4}{\pi}C \mathop{\sum \sum}_{\substack{m^2 + \ell^2 \leq x \\ (\ell,\Pi) = 1}} \mathbf{1}_\mathcal{A}(\ell) \prod_{p \mid \ell}\left(1 + \frac{\chi(p)}{p - 1 - \chi(p)} \right) + O \left((\log x)^3 x^{\frac{1}{2} + \frac{\gamma_0}{2}} P^{-1/2} \right),
\end{align}
provided $U,V \geq x^\delta$ for some absolute constant $\delta > 0$. Here
\begin{align*}
C &= \prod_p \left(1 - \frac{\chi(p)}{(p-1) (p- \chi(p))} \right)
\end{align*}
is the constant in Theorem \ref{main theorem}. Since $P^-(\ell) > P$ we have
\begin{align*}
\prod_{p \mid \ell}\left(1 + \frac{\chi(p)}{p - 1 - \chi(p)} \right) &= 1 + O \left(\frac{\log \ell}{P} \right),
\end{align*}
and so \eqref{mxuv main term, before fund lemma} becomes
\begin{align}\label{mxuv mt, removed f ell, before fund}
M(x;U,V) &= \frac{4}{\pi} C \mathop{\sum \sum}_{\substack{m^2 + \ell^2 \leq x \\ (\ell,\Pi) = 1}} \mathbf{1}_\mathcal{A}(\ell) + O \left((\log x)^3 x^{\frac{1}{2} + \frac{\gamma_0}{2}} P^{-1/2} \right).
\end{align}

\section{The sieve main term: fundamental lemma}\label{sec: sieve main, using fund}

We wish to simplify the main term of \eqref{mxuv mt, removed f ell, before fund} by removing the condition $(\ell,\Pi) = 1$, which we accomplish with the fundamental lemma of sieve theory. 

In order to apply the sieve we require information about the elements of $\mathcal{A}$ in arithmetic progressions. We invariably detect congruence conditions on elements of $\mathcal{A}$ via additive characters, so we require information on exponential sums over $\mathcal{A}$. It is convenient to normalize these exponential sums so that we may study them at different scales. For $Y$ an integral power of 10, we define
\begin{align}\label{defn of F}
F_Y(\theta) &:= Y^{-\log 9/\log 10} \left|\sum_{0 \leq n < Y} \mathbf{1}_\mathcal{A}(n) e(n\theta) \right|,
\end{align}
so $F_Y(\theta) \ll 1$ for all $Y$ and real numbers $\theta$. Observe that $F_Y$ is a periodic function with period 1. We emphasize that $Y$ is always a power of 10 when it appears in a subscript.

Let $U$ and $V$ be two integral powers of ten (here $U$ and $V$ have nothing to do with the $U$ and $V$ from Vaughan's identity \eqref{vaughans identity}). From the definition \eqref{defn of F} it is not difficult to derive (see \cite[p. 6]{maynard}) the identity
\begin{align}\label{product formula for F}
F_{UV}(\theta) = F_U(\theta) F_V(U\theta).
\end{align}

We take the opportunity to collect in one place the lemmas we need to estimate $F_Y$ and various averages of $F_Y$. 

The first result is a sort of Siegel-Walfisz result for $F_Y$.

\begin{lemma}\label{maynard small moduli}
Let $q < Y^{1/3}$ be of the form $q = q_1q_2$ with $(q_1,10) = 1$ and $q_1 > 1$. Then for any integer $a$ coprime to $q$ we have
\begin{align*}
F_Y \left( \frac{a}{q} \right) &\ll \exp \left( - c_0 \frac{\log Y}{\log q} \right)
\end{align*}
for some absolute constant $c_0 > 0$.
\end{lemma}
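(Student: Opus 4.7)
The strategy is to iterate the product formula \eqref{product formula for F}. Writing $Y = 10^k$, we obtain
\begin{align*}
F_Y(a/q) = \prod_{j=0}^{k-1} F_{10}(10^j a/q),
\end{align*}
and the task reduces to showing this product is at most $\exp(-c_0 \log Y / \log q)$. Since $(a,q)=1$, $q_1 > 1$, and $(q_1,10)=1$, one has $10^j a \not\equiv 0 \pmod{q_1}$ for every $j$, so each factor is strictly less than $1$.

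To estimate individual factors I would rely on two complementary bounds. A compactness argument gives $F_{10}(\theta) \leq 1 - c_1$ for $\|\theta\| \geq 1/10$, where $\|\cdot\|$ denotes distance to the nearest integer and $c_1 > 0$ is absolute, while in the intermediate range $\|\theta\| < 1/10$ the Dirichlet-kernel identity
\begin{align*}
\sum_{b=0}^{9} e(b\theta) = e(9\theta/2)\,\frac{\sin(10\pi\theta)}{\sin(\pi\theta)}
\end{align*}
gives $F_{10}(\theta) \leq \tfrac{1}{9}(|\sin(10\pi\theta)/\sin(\pi\theta)| + 1)$, which is bounded well below $1$ precisely when $\|10\theta\|$ is comparable to $\|\theta\|$. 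A Taylor expansion provides the fallback bound $F_{10}(\theta) \leq 1 - c_2\|\theta\|^2$ uniformly in $\theta$, but this alone is too weak when $\|\theta\|$ is of order $1/q$.

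The arithmetic heart of the argument is to exploit the structure of the orbit $\{10^j a \bmod q\}_{j \geq 0}$. Setting $L := \lceil \log q/\log 10 \rceil + O(1)$, I would split $\{0,\ldots,k-1\}$ into $\asymp k/L$ disjoint blocks of length $L$ and show that each block contributes $\gg 1$ to $-\log F_Y(a/q)$. In each block, either some index $j$ has $\|10^j a/q\| \geq 1/10$ (equivalently, the leading digit $d_{j+1}$ of $\{10^j a/q\}$ lies in $\{1,\ldots,8\}$), in which case the compactness bound yields $F_{10}(10^j a/q) \leq 1 - c_1$; or every leading digit in the block lies in $\{0,9\}$. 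In the latter case the $L$ digits cannot all equal $0$ (else $10^j a \equiv 0 \pmod{q_1}$, contradicting $(a,q_1)=1$) nor all equal $9$ (same contradiction applied to the digits of $(q-a)/q$), so the block contains a $0$-to-$9$ or $9$-to-$0$ transition; at this transition $\|\theta\|$ and $\|10\theta\|$ are both of order $1/10$, and the Dirichlet-kernel bound yields $F_{10}(10^j a/q)$ less than an absolute constant strictly below $1$. Summing over all blocks gives $-\log F_Y(a/q) \gg k/L \gg \log Y/\log q$, as required.

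The main difficulty is the careful interplay between the two factor-bounds in the third paragraph: neither the compactness nor the Dirichlet-kernel estimate suffices on its own, and one must verify that every block falls into at least one of the favourable regimes. A minor technical issue is that the sequence $10^j a \bmod q$ is only eventually periodic modulo $q_1$; the initial transient has length $O(\log q)$, controlled by the $2$- and $5$-parts of $q/q_1$, but this is harmless at the scale of the block decomposition.
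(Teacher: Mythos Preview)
The paper does not prove this lemma; it simply cites \cite[Proposition 4.1]{maynard}. Your sketch is essentially a reconstruction of Maynard's argument and is sound in outline: the product formula reduces matters to bounding $\prod_j F_{10}(10^j a/q)$, and the block decomposition with $L \asymp \log q/\log 10$ together with the digit-string analysis correctly shows that each block contributes a factor bounded strictly below $1$.

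One small imprecision: at a $0$-to-$9$ transition with $\theta = 10^j a/q$, you claim both $\|\theta\|$ and $\|10\theta\|$ are of order $1/10$. In fact $\|\theta\| \in [0.09,0.1)$ is guaranteed, but $\|10\theta\|$ may be arbitrarily small (if the subsequent digits are $9,9,\ldots$). This does no harm: in the bound $F_{10}(\theta) \le \tfrac{1}{9}\bigl(|\sin(10\pi\theta)/\sin(\pi\theta)| + 1\bigr)$, the numerator $|\sin(10\pi\theta)| = \sin(\pi\|10\theta\|)$ only gets smaller, while the denominator $\sin(\pi\|\theta\|) \ge \sin(0.09\pi)$ is bounded below. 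So the conclusion $F_{10}(\theta) \le 1 - c$ follows from $\|\theta\| \ge 0.09$ alone, and the separate compactness and Dirichlet-kernel regimes could in fact be merged into a single bound valid for $\|\theta\| \ge 0.09$. Your remark on the transient phase is correct and handled appropriately.
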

\begin{proof}
This is a slight weakening of \cite[Proposition 4.1]{maynard}.
\end{proof}

The next two lemmas are results of large sieve type for $F_Y$.

\begin{lemma}\label{maynard single denominator}
For $q \geq 1$ we have
\begin{align*}
\sup_{\beta \in \mathbb{R}} \ \sum_{a \leq q} F_X \left(\frac{a}{q} + \beta \right) \ll q^{27/77} + \frac{q}{X^{50/77}}.
\end{align*}
\end{lemma}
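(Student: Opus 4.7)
The stated inequality is a single-denominator $L^1$ large-sieve bound for the digit-restricted exponential sum $F_X$, following the approach of Maynard \cite{maynard}. I would import his argument, whose structure is as follows.

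The starting point is the multiplicative identity \eqref{product formula for F}. Writing $X = X_1 X_2$ as a product of two powers of $10$ to be chosen later, one factors
\[
F_X\!\left(\tfrac{a}{q} + \beta\right) = F_{X_1}\!\left(\tfrac{a}{q}+\beta\right) F_{X_2}\!\left(X_1\!\left(\tfrac{a}{q}+\beta\right)\right),
\]
and splits the sum over $a$ via H\"older's inequality into $L^{2s}$-averages of the two factors. The $L^2$ average on a single denominator is controlled by Parseval on $\mathbb{Z}/q\mathbb{Z}$, which reduces
\[
\sum_{a \bmod q} \left| \sum_{0 \le n < Y} \mathbf{1}_\mathcal{A}(n) e\!\left(n\!\left(\tfrac{a}{q}+\beta\right)\right) \right|^2
\]
to an expression measuring how concentrated $\mathcal{A} \cap [0,Y)$ is among residue classes modulo $q$. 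Higher-moment averages may be reached by iterating \eqref{product formula for F} inside the $L^2$ identity.

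The two terms in the final bound correspond to two regimes. When $q$ is small compared to $X$, the bound $q^{27/77}$ dominates and comes primarily from the $L^{2s}$-moment input on one factor combined with a trivial bound on the other. When $q$ is large, a trivial pointwise bound on the $F_{X_2}$-factor becomes efficient, producing the $q X^{-50/77}$ term. The specific exponents $27/77$ and $50/77$ are the outcome of optimizing the split $X = X_1 X_2$ (over powers of $10$) to balance the two contributions.

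The principal technical obstacle is bounding the concentration of $\mathcal{A}$ in residue classes modulo $q$ when $(q,10)>1$, since the base-$10$ digit structure of $\mathcal{A}$ interacts strongly with such moduli. This is handled by writing $q = q_1 q_2$ with $(q_1,10)=1$ and $q_2 \mid 10^\infty$, invoking Lemma \ref{maynard small moduli} to control the $q_1$-aspect, and using a direct digit-by-digit description of $\mathcal{A}$ modulo $q_2$.
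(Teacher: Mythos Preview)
The paper's own proof is simply a citation to \cite[Lemma~4.5]{maynard}, so the relevant comparison is with Maynard's actual argument, which the paper itself sketches in Section~\ref{sec: mods for other theorem} (Lemmas~\ref{lem: matrix moment}--\ref{lem: more digits large sieve}).

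Your sketch correctly identifies two structural ingredients: the product formula \eqref{product formula for F} and a large-sieve-type transfer from the special denominator $Y=10^k$ to a general $q$. But the mechanism you propose for the exponent $27/77$ is wrong. You attribute it to an $L^2$/Parseval bound on $\mathbb{Z}/q\mathbb{Z}$ combined with H\"older and an optimized split $X=X_1X_2$. An $L^2$ estimate of the type you describe only yields, after Cauchy--Schwarz, bounds of strength roughly $q^{1/2}X^{(1-\gamma_0)/2}$ or similar; no amount of optimizing the split will squeeze out the specific constant $27/77$ from Parseval alone.

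In Maynard's argument the exponent $27/77$ is \emph{not} the outcome of a soft optimization. It is the numerical value of $\log_{10}\lambda$, where $\lambda$ is the largest eigenvalue of an explicit finite matrix built from the digit function (see Lemma~\ref{lem: matrix moment} and the proof of Lemma~\ref{lem: more digits L1 bound}). This eigenvalue computation gives the $L^1$ bound
\[
\sup_{\beta}\sum_{0\le a<Y}F_Y\!\left(\tfrac{a}{Y}+\beta\right)\ll Y^{27/77}
\]
at the special denominator $Y=10^k$; only then does a spacing/large-sieve argument (as in \cite[Lemma~4.5]{maynard} and Lemma~\ref{lem: more digits large sieve} here) transfer it to arbitrary $q$, producing the two terms $q^{27/77}$ and $qX^{-50/77}$ with $50/77=1-27/77$. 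Your proposal is missing this eigenvalue input entirely, and without it the argument cannot reach the stated exponents.
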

\begin{proof}
This is a slight weakening of the first part of \cite[Lemma 4.5]{maynard}.
\end{proof}

\begin{lemma}\label{maynard type I}
For $Q \geq 1$ we have
\begin{align*}
\sup_{\beta \in \mathbb{R}} \ \sum_{q \leq Q} \sum_{\substack{1 \leq a \leq q \\ (a,q)=1}} F_Y \left(\frac{a}{q} + \beta \right) \ll Q^{54/77} + \frac{Q^2}{Y^{50/77}}.
\end{align*}
\end{lemma}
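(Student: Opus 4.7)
The statement is a large-sieve type estimate for $F_Y$ on shifted Farey fractions, and it is labelled a weakening of Maynard's Lemma 4.5. My plan is to follow that proof; since the estimate ultimately comes from \cite{maynard}, I describe mainly where the key savings enter.

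Summing Lemma \ref{maynard single denominator} over $q \leq Q$ term by term yields
$$\sum_{q \leq Q}\bigl(q^{27/77} + q Y^{-50/77}\bigr) \ll Q^{104/77} + Q^2 Y^{-50/77}.$$
The second piece already matches the desired bound, so the only non-trivial task is to improve $Q^{104/77}$ to $Q^{54/77}$. This requires genuine cancellation between different denominators: even the classical large sieve $\sum_q\sum_a|S(a/q+\beta)|^2 \ll (Y+Q^2)\|S\|_2^2$ applied to the unnormalised trigonometric polynomial $S$ associated with $F_Y$ (so $\|S\|_2^2 \asymp Y^{\gamma_0}$) delivers, after Cauchy--Schwarz over the $\sum_{q\leq Q}\varphi(q) \asymp Q^2$ Farey fractions, only $Q\,Y^{(1-\gamma_0)/2}$ in the range $Q \leq Y^{1/2}$, which is weaker than $Q^{54/77}$.

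The improvement exploits the multiplicative self-similarity of $F_Y$ via the product formula \eqref{product formula for F}. I would factor $Y = Y_1 Y_2$ with $Y_1,Y_2$ powers of ten, write
$$F_Y(a/q+\beta) = F_{Y_1}(a/q+\beta)\,F_{Y_2}(Y_1 a/q + Y_1\beta),$$
and apply Cauchy--Schwarz on $(q,a)$ to split the problem into two $L^2$ sums at scales $Y_1$ and $Y_2$. Each factor is then controlled by a squared analogue of Lemma \ref{maynard single denominator}. Optimising $Y_1$ as an appropriate power of $Q$ balances the two contributions and produces the exponent $54/77 = 2\cdot 27/77$, reflecting the square-root effect of the Cauchy--Schwarz step.

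The main obstacle is the behaviour of the map $a/q \mapsto Y_1 a/q \pmod 1$ when $(q,Y_1) > 1$: the image lands on Farey fractions of smaller denominator, possibly ones sharing a common factor with $10$, where $\mathcal{A}$ is badly distributed and Lemma \ref{maynard small moduli} offers no savings. Controlling these degenerate denominators, and verifying that they contribute admissibly, is the technical heart of Maynard's argument. Since the stated bound is a genuine weakening of the optimal form proved in \cite{maynard}, in practice the cleanest route is simply to invoke \cite[Lemma 4.5]{maynard} and absorb the loss in the exponent.
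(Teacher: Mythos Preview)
Your bottom line---invoke \cite[Lemma~4.5]{maynard} directly---is exactly what the paper does, so the proof is correct as stated.

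That said, your sketch of the mechanism behind Maynard's argument is off, and since you may want to adapt it elsewhere it is worth correcting. The exponent $54/77 = 2\cdot 27/77$ does \emph{not} come from Cauchy--Schwarz on the factorisation $F_Y = F_{Y_1}\cdot F_{Y_2}(Y_1\,\cdot\,)$. Instead, one takes $Y_1$ to be the smallest power of ten with $Y_1 \geq Q^2$, uses the trivial bound $F_Y \leq F_{Y_1}$ (discarding $F_{Y_2}$ entirely), and observes that the shifted Farey fractions $a/q+\beta$ with $q\leq Q$, $(a,q)=1$ are $Q^{-2}$-separated, hence at most $O(1)$ of them lie in any interval of length $Y_1^{-1}$. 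The $L^1$-type bound
\[
\sup_{\beta}\sum_{0\leq b<Y_1} F_{Y_1}\!\left(\frac{b}{Y_1}+\beta\right) \ll Y_1^{27/77}
\]
(Maynard's Lemma~4.3; compare Lemma~\ref{lem: more digits L1 bound} in this paper) then gives $\ll Y_1^{27/77}\asymp Q^{54/77}$. The term $Q^2/Y^{50/77}$ covers the complementary range $Q>Y^{1/2}$. In particular, no $L^2$ estimate is needed, and the obstruction you anticipate from $(q,Y_1)>1$ never arises because the second factor $F_{Y_2}(Y_1\,\cdot\,)$ is simply dropped rather than analysed.
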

\begin{proof}
This is a slight weakening of the second part of \cite[Lemma 4.5]{maynard}.
\end{proof}

Now that the necessary results are in place, we proceed with the estimation of the main term in \eqref{mxuv mt, removed f ell, before fund}. We write
\begin{align*}
\mathop{\sum \sum}_{\substack{m^2 + \ell^2 \leq x \\ (\ell,\Pi) = 1}} \mathbf{1}_\mathcal{A}(\ell) &= \sum_{|m| \leq x^{1/2}} \sum_{\substack{\ell \leq \sqrt{x - m^2} \\ (\ell,\Pi)=1}} \mathbf{1}_\mathcal{A}(\ell) \\
&= \sum_{|m| \leq \sqrt{1 - P^{-2}} x^{1/2}} \ \sum_{\substack{\ell \leq \sqrt{x - m^2} \\ (\ell,\Pi)=1}} \mathbf{1}_\mathcal{A}(\ell) + O(x^{\frac{1}{2} + \frac{\gamma_0}{2}} P^{-1}),
\end{align*}
the second equality following by trivial estimation.

With the restriction $|m| \leq \sqrt{1 - P^{-2}} x^{1/2}$ on $m$ we estimate each sum over $\ell$ individually. Set $z = z(m) = \sqrt{x - m^2}$. We apply upper- and lower-bound linear sieves of level 
\begin{align*}
D = z^{1/5}
\end{align*}
(see \cite[Chapter 5]{opera} for terminology). Therefore
\begin{align}\label{upp and low bd sieve weights}
\sum_{\substack{d \leq D \\ d \mid \Pi \\ (d,10)=1}} \lambda_d^- \sum_{\substack{\ell \leq z \\ \ell \equiv 0 (d) \\ (\ell,10)=1}} \mathbf{1}_\mathcal{A}(\ell) &\leq \sum_{\substack{\ell \leq z \\ (\ell,\Pi)=1}} \mathbf{1}_\mathcal{A}(\ell) \leq \sum_{\substack{d \leq D \\ d \mid \Pi \\ (d,10)=1}} \lambda_d^+ \sum_{\substack{\ell \leq z \\ \ell \equiv 0 (d) \\ (\ell,10)=1}} \mathbf{1}_\mathcal{A}(\ell).
\end{align}
The upper and lower bounds turn out to be asymptotically equal, and we write $\lambda_d$ for $\lambda_d^+$ or $\lambda_d^-$.

It is difficult to work with elements of $\mathcal{A}$ over intervals whose lengths are not a power of 10. We put ourselves in this situation with a short interval decomposition (a similar technique is applied in \cite{bour}). Let $Y$ be the largest power of 10 that satisfies $Y \leq z P^{-1}$. We break the summation over $\ell$ into intervals of the form $[nY,(n+1)Y)$, where $n$ is a nonnegative integer. This gives
\begin{align}\label{fund lem short interval}
\sum_{\substack{\ell \leq z \\ \ell \equiv 0 (d) \\ (\ell,10)=1}} \mathbf{1}_\mathcal{A}(\ell) &= \sum_{n \in S(z)} \ \sum_{\substack{nY \leq \ell < (n+1)Y \\ \ell \equiv 0 (d) \\ (\ell,10)=1}} \mathbf{1}_\mathcal{A}(\ell) + O \left(\sum_{\substack{z - Y \leq \ell \leq z + Y \\ \ell \equiv 0 (d)}} \mathbf{1}_\mathcal{A}(\ell) \right).
\end{align}
Here $S(z)$ is some set of size $\ll P$. We remark that we will repeatedly see this technique of breaking an interval into shorter subintervals, with each subinterval having length a power of 10, in the estimation of the bilinear sum $B(x;U,V)$.

We first illustrate how to use Lemma \ref{maynard type I} to handle the error term in \eqref{fund lem short interval}. On summing over $d$, we must estimate
\begin{align*}
\mathcal{E} &:= \sum_{d \leq D} \sum_{\substack{z - Y \leq \ell \leq z + Y \\ \ell \equiv 0 (d)}} \mathbf{1}_\mathcal{A}(\ell).
\end{align*}
Because the estimation of $\mathcal{E}$ introduces a number of important ideas that we use throughout the proof of Theorem \ref{main theorem}, we encapsulate the estimation in a lemma.
\begin{lemma}\label{prototypical estimation lemma one}
With the notation as above,
\begin{align*}
\mathcal{E} &\ll (\log D)^2 \ Y^{\gamma_0}.
\end{align*}
\end{lemma}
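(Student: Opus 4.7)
The plan is to detect the congruence $\ell \equiv 0 \pmod d$ by additive characters, recast the resulting exponential sum over $\mathcal{A}$ in terms of $F_Y$, and close with Lemma \ref{maynard type I}.

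Since $Y$ is a power of $10$, the interval $[z-Y, z+Y]$ is contained in the union of at most three consecutive blocks $[nY,(n+1)Y)$, so it suffices to bound the corresponding sum on each such full block. For $\ell = nY + m$ with $0 \le m < Y$ the base-$10$ digits of $\ell$ are those of $n$ followed by the digits of $m$ (padded to length $\log_{10} Y$); hence $\mathbf{1}_\mathcal{A}(nY+m) = \mathbf{1}_\mathcal{A}(n)\mathbf{1}_\mathcal{A}(m)$, and the congruence becomes $m \equiv -nY \pmod d$. Detecting this with additive characters modulo $d$, bounding the phase $|e(anY/d)|=1$ trivially, and using \eqref{defn of F}, I obtain
\begin{equation*}
\sum_{\substack{nY \le \ell < (n+1)Y \\ \ell \equiv 0 (d)}} \mathbf{1}_\mathcal{A}(\ell) \le \frac{Y^{\gamma_0}}{d}\sum_{a \pmod d} F_Y\!\left(\frac{a}{d}\right).
\end{equation*}

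It remains to estimate $\Sigma := \sum_{d \le D} d^{-1} \sum_{a \pmod d} F_Y(a/d)$. The factor $1/d$ obstructs a direct appeal to Lemma \ref{maynard type I}, so I first reduce each fraction $a/d$ to lowest terms $a'/d'$ with $d' \mid d$; swapping orders of summation and using the elementary bound $\sum_{d'\mid d,\, d \le D} 1/d \ll (\log D)/d'$ gives
\begin{equation*}
\Sigma \ll (\log D)\sum_{d' \le D}\frac{1}{d'}\sum_{\substack{1 \le a' \le d' \\ (a',d')=1}} F_Y\!\left(\frac{a'}{d'}\right).
\end{equation*}
Partial summation on $d'$ against the bound $\sum_{d' \le t}\sum_{(a',d')=1} F_Y(a'/d') \ll t^{54/77} + t^2 Y^{-50/77}$ from Lemma \ref{maynard type I} (taken with $\beta = 0$) then yields $\Sigma \ll (\log D)(1 + DY^{-50/77})$. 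Because $D = z^{1/5}$ and $Y \gg z/P$ with $P = x^{o(1)}$, the term $DY^{-50/77}$ is $O(z^{-\delta})$ for some $\delta > 0$; thus $\Sigma \ll \log D$ and $\mathcal{E} \ll Y^{\gamma_0}\log D$, comfortably giving the stated bound.

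The principal obstacle is the weight $1/d$ in $\Sigma$: one must route around it via reduction to lowest terms and partial summation rather than invoke Lemma \ref{maynard type I} head-on. Once this is done the remaining steps are routine bookkeeping, and the compatibility of the parameter choices $D = z^{1/5}$, $Y \asymp z/P$ with the exponent $50/77$ from the large-sieve bound leaves ample room.
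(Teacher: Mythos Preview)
Your approach is essentially the same as the paper's: detect the congruence with additive characters, reduce to primitive fractions, and close with Lemma~\ref{maynard type I} (you use partial summation where the paper uses a dyadic decomposition, which actually saves a harmless $\log D$). One correction: the identity $\mathbf{1}_\mathcal{A}(nY+m) = \mathbf{1}_\mathcal{A}(n)\mathbf{1}_\mathcal{A}(m)$ fails when the excluded digit is $a_0 = 0$, since for $m < Y/10$ the length-$\log_{10}Y$ string for $m$ has a leading zero and hence $\mathbf{1}_\mathcal{A}(nY+m)=0$ even when $\mathbf{1}_\mathcal{A}(m)=1$. The paper handles this by splitting on whether $a_0=0$; for your argument the inequality $\mathbf{1}_\mathcal{A}(nY+m) \le \mathbf{1}_\mathcal{A}(n)\mathbf{1}_\mathcal{A}(m)$ holds in both cases and is all that is needed for the upper bound, so the proof goes through once you replace the equality by this inequality.
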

\begin{proof}
For $X$ some power of 10 with $Y \leq X \ll Y$ and some integer $k$ depending only on $z,Y$, and $X$, we have
\begin{align*}
\mathcal{E} &\leq \sum_{d \leq D} \ \sum_{\substack{kX \leq \ell < (k+1)X \\ \ell \equiv 0 (d)}}\mathbf{1}_\mathcal{A}(\ell).
\end{align*}
If $\mathbf{1}_\mathcal{A}(k) = 0$ then the sum over $\ell$ is empty and $\mathcal{E} = 0$. Suppose then that $\mathbf{1}_\mathcal{A}(k) = 1$. We write $\ell = kX + t$, where $0 \leq t < X$. There are now two subcases to consider, depending on whether the missing $a_0$ is equal to 0 or not. If $a_0 \neq 0$ then $\mathbf{1}_\mathcal{A}(kX + t) = \mathbf{1}_\mathcal{A}(t)$ for $0 \leq t < X$. If $a_0 = 0$ then $\mathbf{1}_\mathcal{A}(kX + t) = 0$ for $0 \leq t < X/10$ and $\mathbf{1}_\mathcal{A}(kX + t) = \mathbf{1}_\mathcal{A}(t)$ for $X/10 \leq t < X$. We can unite the two subcases by writing
\begin{align*}
\mathcal{E} &\leq \sum_{d \leq D} \ \sum_{\substack{\delta(a_0)X/10 \leq t < X \\ t \equiv -kX (d)}} \mathbf{1}_\mathcal{A}(t),
\end{align*}
where
\begin{align*}
\delta(n) = 
\begin{cases}
1, \ \ \ n = 0, \\
0, \ \ \ n \neq 0.
\end{cases}
\end{align*}
By inclusion-exclusion and the triangle inequality we find
\begin{align*}
\mathcal{E} &\ll \sum_{d \leq D}\sum_{\substack{t < X \\ t \equiv -kX (d)}} \mathbf{1}_\mathcal{A}(t).
\end{align*}
We detect the congruence via the orthogonality of additive characters, which yields
\begin{align*}
\mathcal{E} &\ll \sum_{d \leq D} \frac{1}{d} \sum_{r=1}^d e \left(\frac{rkX}{d} \right)\sum_{t < X} \mathbf{1}_\mathcal{A}(t) e \left(\frac{rt}{d} \right).
\end{align*}
By the triangle inequality,
\begin{align*}
\mathcal{E} &\ll X^{\gamma_0}\sum_{d \leq D}\frac{1}{d} \sum_{r=1}^d F_X \left(\frac{r}{d} \right).
\end{align*}
We remove the terms with $r =d$ (the ``zero'' frequency), which gives
\begin{align*}
\mathcal{E} &\ll (\log D)X^{\gamma_0} + X^{\gamma_0}\sum_{1 <d \leq D}\frac{1}{d} \sum_{r=1}^{d-1} F_X \left(\frac{r}{d} \right).
\end{align*}
For the ``non-zero'' frequencies we reduce to primitive fractions and obtain
\begin{align*}
\sum_{1 <d \leq D}\frac{1}{d} \sum_{r=1}^{d-1} F_X \left(\frac{r}{d} \right) &= \sum_{1 < d \leq D} \frac{1}{d}\sum_{\substack{q \mid d \\ q > 1}} \sum_{\substack{1 \leq b \leq q \\ (b,q)=1}} F_X \left( \frac{b}{q} \right) \ll (\log D) \sum_{1<q \leq D} \frac{1}{q} \sum_{\substack{1 \leq b \leq q \\ (b,q)=1}} F_X \left( \frac{b}{q} \right).
\end{align*}
We perform a dyadic decomposition on the range of $q$ to get
\begin{align*}
\mathcal{E} &\ll (\log D)^2 \ X^{\gamma_0} \ \sup_{Q \leq D} \frac{1}{Q} \sum_{q \leq Q} \sum_{\substack{1 \leq b \leq q \\ (b,q)=1}} F_X \left( \frac{b}{q} \right).
\end{align*}
By Lemma \ref{maynard type I},
\begin{align*}
\mathcal{E} &\ll (\log D)^2 \ X^{\gamma_0} \sup_{Q \leq D} \left(\frac{1}{Q^{23/77}} + \frac{Q}{X^{50/77}}  \right) \ll (\log D)^2 \ X^{\gamma_0} \left(1 + \frac{D}{X^{50/77}}\right) \\
&\ll (\log D)^2 \ X^{\gamma_0},
\end{align*}
and this completes the proof.
\end{proof}

From Lemma \ref{prototypical estimation lemma one} it follows that
\begin{align*}
\sum_{\substack{d \leq D \\ d \mid \Pi \\ (d,10)=1}} \lambda_d \sum_{\substack{\ell \leq z \\ \ell \equiv 0 (d) \\ (\ell,10)=1}} \mathbf{1}_\mathcal{A}(\ell) &= \sum_{n \in S(z)} \mathbf{1}_\mathcal{A}(n) \sum_{\substack{d \leq D \\ d \mid \Pi \\ (d,10)=1}} \lambda_d \sum_{\substack{\delta(a_0)Y/10 \leq t < Y \\ t \equiv -nY (d) \\ (t,10)=1}} \mathbf{1}_\mathcal{A}(t) + O(x^{\gamma_0/2} P^{-1/2}).
\end{align*}
We detect the congruence with additive characters and obtain
\begin{align*}
\sum_{\substack{\delta(a_0)Y/10 \leq t < Y \\ t \equiv -nY (d) \\ (t,10)=1}} \mathbf{1}_\mathcal{A}(t) = \frac{1}{d} \sum_{r=1}^d e \left(\frac{r nY}{d} \right) \sum_{\substack{\delta(a_0)Y/10 \leq t < Y \\ (t,10)=1}} \mathbf{1}_\mathcal{A}(t) e \left(\frac{rt}{d} \right).
\end{align*}
Naturally we extract the main term from $r = d$.

Define
\begin{align*}
\kappa &:=
\begin{cases}
\frac{\varphi(10)}{9}, \ \ \  (a_0,10)\neq 1, \\
\frac{\varphi(10)-1}{9}, \ \ \  (a_0,10) = 1.
\end{cases}
\end{align*}
It is easy to check that
\begin{align*}
\sum_{\substack{t < 10^k \\ (t,10)=1}} \mathbf{1}_\mathcal{A}(t) = \kappa \sum_{t < 10^k} \mathbf{1}_\mathcal{A}(t),
\end{align*}
which implies
\begin{align*}
\sum_{\substack{\delta(a_0)Y/10 \leq t < Y \\ (t,10)=1}} \mathbf{1}_\mathcal{A}(t) &= \kappa \sum_{\substack{\delta(a_0)Y/10 \leq t < Y}} \mathbf{1}_\mathcal{A}(t).
\end{align*}

For $1 \leq r \leq d-1$ we handle the condition $(t,10) = 1$ with M\"obius inversion. We then reverse our short interval decomposition to get
\begin{align}\label{intermed fund lemma}
\sum_{\substack{d \leq D \\ d \mid \Pi \\ (d,10)=1}} \lambda_d \sum_{\substack{\ell \leq z \\ \ell \equiv 0 (d) \\ (\ell,10)=1}} \mathbf{1}_\mathcal{A}(\ell) &= \kappa \sum_{\substack{d \leq D \\ d \mid \Pi \\ (d,10)=1}} \frac{\lambda_d}{d} \sum_{\ell \leq z} \mathbf{1}_\mathcal{A}(\ell) + O \left(x^{\gamma_0/2} \left(P^{-1/2} + E P^{\gamma_0} \right) \right),
\end{align}
where
\begin{align*}
E &:= \sum_{\substack{1<d \leq D \\ (d,10)=1}}\frac{1}{d} \sum_{e \mid 10} \sum_{\substack{r=1}}^{d-1} F_X \left(\frac{r}{d} + \frac{e}{10} \right)
\end{align*}
and $X$ is a power of 10 with $X \asymp Y$. Similarly to the estimation of $\mathcal{E}$ in Lemma \ref{prototypical estimation lemma one} above, we put, for pedagogical reasons, the estimation of $E$ into a lemma.
\begin{lemma}\label{prototypical estimation lemma two}
With the notation given above,
\begin{align*}
E \ll \exp(-c \sqrt{\log z})
\end{align*}
for some absolute constant $c > 0$.
\end{lemma}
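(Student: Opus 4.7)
The plan is to reduce $E$ to a sum over primitive fractions and then split the range of denominators, using Lemma \ref{maynard small moduli} for small denominators and Lemma \ref{maynard type I} for large ones. Writing $\tfrac{r}{d}+\tfrac{e}{10}=\tfrac{10r+ed}{10d}$ and computing the gcd (using $(d,10)=1$), one sees that this fraction reduces to lowest terms with denominator $q_2 q$, where $q=d/(d,r)$ and $q_2=10/(e,10)\in\{1,2,5,10\}$. Grouping the $r$'s by the primitive fraction $a/q$ they produce and then swapping the $d$- and $q$-sums (so that, for fixed $q>1$ with $(q,10)=1$, the $d$-sum contributes $\sum_{q\mid d\le D}1/d\ll (\log D)/q$) gives
\[
E\ll (\log D)\sum_{e\mid 10}\sum_{\substack{1<q\le D\\(q,10)=1}}\frac{1}{q}\sum_{\substack{1\le a\le q\\(a,q)=1}}F_X\Bigl(\frac{a}{q}+\frac{e}{10}\Bigr).
\]
Crucially, the reduced denominator $q_2 q$ has the exact form $q_1q_2$ required by Lemma \ref{maynard small moduli}: the factor $q$ is coprime to $10$ and $>1$.

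Next, fix a small parameter $\alpha>0$ to be chosen, and split the $q$-sum at $Q_0=\exp(\alpha\sqrt{\log z})$. For $q\le Q_0$, the reduced denominator $q_2q\le 10Q_0$ is comfortably below $X^{1/3}$ (since $D=z^{1/5}$ and $X\asymp z/P$ with $\log P=o(\log z)$), so Lemma \ref{maynard small moduli} applies and yields
\[
F_X\Bigl(\frac{a}{q}+\frac{e}{10}\Bigr)\ll \exp\Bigl(-\frac{c_0\log X}{\log(q_2q)}\Bigr)\ll \exp\Bigl(-\frac{c\sqrt{\log z}}{\alpha}\Bigr).
\]
Summing over $a$ coprime to $q$ and then over $q\le Q_0$ costs at most a factor $Q_0=\exp(\alpha\sqrt{\log z})$, so the total contribution is $\ll(\log D)\exp\bigl((\alpha-c/\alpha)\sqrt{\log z}\bigr)$, which is $\ll\exp(-c'\sqrt{\log z})$ once $\alpha$ is chosen small enough that $\alpha-c/\alpha<0$.

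For $q>Q_0$, I would dyadically decompose $q\in(Q,2Q]$ with $Q_0\le Q\le D$ and apply Lemma \ref{maynard type I} with the fixed shift $\beta=e/10$, obtaining
\[
\sum_{Q<q\le 2Q}\sum_{\substack{(a,q)=1}}F_X\Bigl(\frac{a}{q}+\frac{e}{10}\Bigr)\ll Q^{54/77}+\frac{Q^{2}}{X^{50/77}}.
\]
Dividing by $q\ge Q$ and summing the dyadic contributions gives $\ll Q_0^{-23/77}+D\,X^{-50/77}$; the first term is $\exp(-\tfrac{23\alpha}{77}\sqrt{\log z})$, and the second, with $D=z^{1/5}$ and $X\gg z^{1-o(1)}$, is bounded by a negative power of $z$. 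Combining both regimes finishes the proof. The main obstacle is calibrating the single parameter $\alpha$: it must be large enough that Lemma \ref{maynard type I} still saves on the large-$q$ piece (via $Q_0^{-23/77}$) yet small enough that the Siegel--Walfisz savings from Lemma \ref{maynard small moduli} beat the trivial $Q_0$ loss on the small-$q$ piece. Since the Maynard constant $c_0$ is absolute, one such $\alpha$ suffices.
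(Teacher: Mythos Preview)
Your argument is correct and follows essentially the same route as the paper: reduce to primitive fractions $a/q$ with $(q,10)=1$, swap the $d$- and $q$-sums to pick up the $(\log D)/q$ factor, split at $Q_0=\exp(\varepsilon\sqrt{\log z})$, and apply Lemma~\ref{maynard small moduli} for small $q$ and Lemma~\ref{maynard type I} for large $q$.

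The one point of departure is how you deal with the shift $e/10$. The paper eliminates it up front via the product formula \eqref{product formula for F}: writing $F_X(\theta)=F_{10}(\theta)F_{X/10}(10\theta)$ and using periodicity, one gets $F_X(a/q+e/10)\ll F_{X/10}(10a/q)$; after the harmless change of variables $10a\to a$ (valid since $(q,10)=1$), the sum becomes $\sum_{(a,q)=1}F_{X/10}(a/q)$ with no shift, and both lemmas apply in their cleanest form. You instead keep the shift and observe that the full reduced denominator $q_2q$ still satisfies the hypothesis of Lemma~\ref{maynard small moduli} (since the coprime-to-$10$ part $q$ is $>1$), while for large $q$ the $\sup_\beta$ in Lemma~\ref{maynard type I} absorbs $\beta=e/10$. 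Both devices work; the product-formula trick is worth noting because the paper reuses it in heavier situations later (e.g.\ the proof of Lemma~\ref{lem: remove large Delta10}), where the shift has a large denominator and cannot be absorbed so directly.
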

\begin{proof}
We reduce to primitive fractions to derive
\begin{align*}
E &= \sum_{\substack{1<d \leq D \\ (d,10)=1}}\frac{1}{d} \sum_{e \mid 10} \sum_{\substack{q \mid d \\ q > 1}} \sum_{\substack{a = 1 \\ (a,q)=1}}^q F_X \left(\frac{a}{q} + \frac{e}{10} \right).
\end{align*}
We apply \eqref{product formula for F} with $U= 10, V = X/10$ to obtain
\begin{align*}
F_X \left(\frac{a}{q} + \frac{e}{10} \right) &= F_{10} \left(\frac{a}{q} + \frac{e}{10}  \right) F_V \left(\frac{10a}{q} + \frac{10e}{10}  \right) \\
&= F_{10} \left(\frac{a}{q} + \frac{e}{10}  \right)F_V \left(\frac{10a}{q} \right) \ll F_V \left(\frac{10a}{q} \right).
\end{align*}
Since $(10,q) = 1$, we may change variables $10a \rightarrow a$ to obtain
\begin{align*}
E &\ll \sum_{\substack{1<d \leq D \\ (d,10)=1}}\frac{1}{d} \sum_{\substack{q \mid d \\ q > 1}} \sum_{\substack{a = 1 \\ (a,q)=1}}^q F_V \left(\frac{a}{q} \right) \ll (\log D) \sum_{\substack{1 < q \leq D \\ (q,10)=1}} \frac{1}{q}\sum_{\substack{a = 1 \\ (a,q)=1}}^q F_V \left(\frac{a}{q} \right).
\end{align*}
We perform a dyadic decomposition on the range of $q$ to obtain
\begin{align*}
E &\ll (\log D)^2 \sup_{Q \leq D} \frac{1}{Q} \sum_{\substack{1 < q \asymp Q \\ (q,10)=1}} \sum_{\substack{a = 1 \\ (a,q)=1}}^q F_V \left(\frac{a}{q} \right).
\end{align*}
Set $Q_1 = \exp(\varepsilon\sqrt{\log z})$, where $\varepsilon > 0$ is a small positive constant. If $Q > Q_1$ we use Lemma \ref{maynard type I}, and if $Q \leq Q_1$ we use Lemma \ref{maynard small moduli}. Provided $\varepsilon$ in the definition of $Q_1$ is taken sufficiently small in terms of $c_0$ in Lemma \ref{maynard small moduli}, we obtain
\begin{align*}
E &\ll \exp (-c \sqrt{\log z}),
\end{align*}
where $c > 0$ is some absolute constant.
\end{proof}

We take \eqref{intermed fund lemma} with Lemma \ref{prototypical estimation lemma two}, along with the fact that $\log P = o(\sqrt{\log z})$, to get
\begin{align}\label{intermed fund lemma errors dispatched}
\sum_{\substack{d \leq D \\ d \mid \Pi \\ (d,10)=1}} \lambda_d \sum_{\substack{\ell \leq z \\ \ell \equiv 0 (d) \\ (\ell,10)=1}} \mathbf{1}_\mathcal{A}(\ell) &= \kappa \sum_{\substack{d \leq D \\ d \mid \Pi \\ (d,10)=1}} \frac{\lambda_d}{d} \sum_{\ell \leq z} \mathbf{1}_\mathcal{A}(\ell) + O \left(x^{\gamma_0/2} P^{-1/2} \right).
\end{align}
By the fundamental lemma of the linear sieve (see \cite[Lemma 6.11]{opera})
\begin{align}\label{fund lem ref}
\sum_{\substack{d \leq D \\ d \mid \Pi \\ (d,10)=1}} \frac{\lambda_d}{d} &= \left(1 + O \left(\exp \left( - \frac{1}{2}s \log s \right) \right) \right) \prod_{\substack{p \leq P \\ p \nmid 10}} \left(1 - \frac{1}{p} \right),
\end{align}
where
\begin{align*}
s &= \frac{\log D}{\log P} \gg (\log \log x) \sqrt{\log x}.
\end{align*}
It follows that
\begin{align*}
\sum_{\substack{d \leq D \\ d \mid \Pi \\ (d,10)=1}} \lambda_d \sum_{\substack{\ell \leq z \\ \ell \equiv 0 (d) \\ (\ell,10)=1}} \mathbf{1}_\mathcal{A}(\ell) &= \frac{10}{\varphi(10)} \kappa \prod_{\substack{p \leq P}} \left(1 - \frac{1}{p} \right) \sum_{\ell \leq z} \mathbf{1}_\mathcal{A}(\ell) + O \left(x^{\gamma_0/2} P^{-1/2} \right),
\end{align*}
where $\varphi$ is the Euler totient function. We use Mertens' theorem with prime number theorem error term to get
\begin{align*}
\prod_{p \leq P} \left(1 - \frac{1}{p} \right) &= \frac{e^{-\gamma}}{\log P} \left(1 + O \left(\exp \left(-c \sqrt{\log P} \right) \right) \right),
\end{align*}
for some constant $c > 0$. Observe that our lower bound for $\log P$ implies
\begin{align*}
\exp \left(-c \sqrt{\log P} \right) \leq \exp \left(-c (\log \log x)^2 \right),
\end{align*}
so this error term is acceptable for Theorem \ref{main theorem}. Therefore,
\begin{align}\label{mt after fund lemma}
\mathop{\sum \sum}_{\substack{m^2 + \ell^2 \leq x \\ (\ell,\Pi) = 1}} \mathbf{1}_\mathcal{A}(\ell) &= \frac{10}{\varphi(10)} \kappa \frac{e^{-\gamma}}{\log P} \mathop{\sum \sum}_{\substack{m^2 + \ell^2 \leq x}} \mathbf{1}_\mathcal{A}(\ell) + O \left(x^{\frac{1}{2} + \frac{\gamma_0}{2}} \exp \left(-c \sqrt{\log P} \right) \right).
\end{align}
Combining \eqref{mt after fund lemma} with \eqref{mxuv mt, removed f ell, before fund} yields the main term of Theorem \ref{main theorem}.

\section{Bilinear form in the sieve: first steps}\label{sec: first bilinear section}

Let us summarize what we have accomplished thus far. We take \eqref{vaughan decomposition}, \eqref{type i approx main and error}, \eqref{bound for sieve remainder}, \eqref{mxuv mt, removed f ell, before fund}, and \eqref{mt after fund lemma} to obtain
\begin{align}\label{intermed summary}
S(x) &= \frac{4 C \kappa_1}{\pi} \frac{e^{-\gamma}}{\log P} \mathop{\sum \sum}_{m^2 + \ell^2 \leq x} \mathbf{1}_\mathcal{A}(\ell) + B(x;U,V) + O \left(x^{\frac{1}{2} + \frac{\gamma_0}{2}} \exp \left(-c \sqrt{\log P} \right) \right).
\end{align}
This holds provided our parameters $U,V$ in Vaughan's identity \eqref{vaughans identity} satisfy
\begin{align}\label{intermediate restrictions on U,V}
UV \leq x^{\gamma_0 - \delta},  \ \ \ \ \ \ \ \ \ U,V \geq x^\delta,
\end{align}
for some absolute $\delta > 0$. The task now is to show that the Type II sum $B(x;U,V)$ contributes only to the error term of \eqref{intermed summary}. (We note that the implied constant in the error term of \eqref{intermed summary} is effectively computable.)

In the course of our estimations we encounter more severe restrictions on $U,V$ than those in \eqref{intermediate restrictions on U,V}, and we note these as we go along. It transpires that there is some flexibility in choosing $U$ and $V$. For those unwilling to wait in suspense, we mention that the choices
\begin{align}\label{sample choices of U,V}
U = x^{7/10}, \ \ V = x^{1/5},
\end{align}
say, are acceptable for Theorem \ref{main theorem} (see \eqref{restrictions on N} and \eqref{admissible choices for U,V}).

Recall that
\begin{align*}
B(x;U,V) &= \sum_{U < m \leq x/V} (\Lambda_{>U}\star 1)(m) \sum_{V < n \leq x/m} \mu(n) a(mn).
\end{align*}
We shall prove the following proposition.
\begin{proposition}\label{main proposition}
Let $x$ be large, and let $L > 0$ be fixed. There exist absolute constants $C,\omega > 0$ such that
\begin{align*}
B(x;U,V) &\ll_L (\log x)^{- \omega L+C} \ x^{\frac{1}{2} + \frac{\gamma_0}{2}}.
\end{align*}
The implied constant is ineffective.
\end{proposition}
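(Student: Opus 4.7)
The plan follows the Friedlander--Iwaniec treatment of Type II sums for primes of the form $x^2+y^4$, with an additional overlay of Fourier analysis against the Maynard-type bounds for $F_Y$ (Lemmas \ref{maynard small moduli}--\ref{maynard type I}) to handle the missing-digit constraint $\ell \in \mathcal{A}$. I would begin with a smooth dyadic decomposition to reduce matters to ranges $m \asymp M$, $n \asymp N$ with $MN \asymp x$, $M > U$, $N > V$, paying only a $(\log x)^{O(1)}$ loss. On such a range, bound $|(\Lambda_{>U}\star 1)(m)| \ll \tau(m) \log m$ and apply Cauchy--Schwarz in $m$; expanding the square and using the trivial divisor moment estimate for $\tau^2$ reduces the problem to bounding
\begin{align*}
\sum_{n_1, n_2 \asymp N} \mu(n_1)\mu(n_2) \sum_{m \asymp M} a(mn_1)\, a(mn_2),
\end{align*}
with the diagonal $n_1 = n_2$ absorbed trivially by divisor bounds and \eqref{eq: number of elts of cal A}.

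For the off-diagonal, the next step is to enter $\mathbb{Z}[i]$. Each representation $mn_i = g_i^2 + \ell_i^2$ corresponds to a Gaussian integer $\alpha_i = g_i + i\ell_i$ of norm $mn_i$, and after tracking units and common factors one factors $\alpha_i = \beta_i \gamma_i$ with $N\beta_i = m$, $N\gamma_i = n_i$. Since the two copies of $a$ share $m$, the pair $(\beta_1,\beta_2)$ couples through an auxiliary Gaussian modulus $\Delta = (\beta_1,\overline{\beta_2})$. The Cauchy--Schwarz sum is thereby transformed into a multilinear sum over $\Delta,\gamma_1,\gamma_2$, in which the imaginary parts $\ell_i = \Im(\beta_i\gamma_i)$ are subject to $\ell_i \in \mathcal{A}$, $(\ell_i,\Pi)=1$, and a congruence modulo $\Delta$.

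The $\mathcal{A}$-condition and the $\Delta$-congruence are then handled together by additive-character expansion on short intervals of length a power of $10$, in the spirit of the fundamental-lemma analysis of Section \ref{sec: sieve main, using fund}. This decouples $\mathcal{A}$ from the Gaussian variables but replaces $\mathbf{1}_\mathcal{A}$ by values $F_Y(a/q)$ at rationals. The care required is exactly that anticipated in the paper's outline: when $\Delta$ shares factors with $10$, $\mathcal{A}$ is not equidistributed modulo $\Delta$, so the $10$-part of the modulus must be split off via the product formula \eqref{product formula for F} before invoking the large-sieve bounds of Lemmas \ref{maynard single denominator}--\ref{maynard type I} for generic $q$, and the Siegel--Walfisz-type Lemma \ref{maynard small moduli} for very small $q$ (with the threshold $Q_1 = \exp(\varepsilon\sqrt{\log x})$ chosen as in the proof of Lemma \ref{prototypical estimation lemma two}). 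After this averaging, the $\Delta$-congruence essentially drops out, leaving a bilinear sum over Gaussian integers $\gamma_1,\gamma_2$ of norm $\asymp N$ weighted by $\mu\otimes\mu$.

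The final, and hardest, step is extracting cancellation from $\mu(n_1)\mu(n_2)$: decompose the inner sum against the Hecke Grossencharakteren of $\mathbb{Q}(i)$ and appeal to the standard zero-free region of their $L$-functions. Because potential Landau--Siegel-type exceptional zeros for these Hecke $L$-functions cannot be ruled out effectively, the resulting saving is ineffective, which accounts for the ineffectivity in Theorem \ref{main theorem}. The principal obstacle is balancing this step with the preceding Fourier losses: the Hecke input demands that $N$ be at least a small fixed power of $x$, while the accumulated losses from the $F_Y$ manipulations impose an upper bound on $N$ as well. The available Type II range turns out to be much wider than what is strictly needed, which (as remarked after Theorem \ref{main theorem}) is precisely what makes the extension to three missing digits possible; for Theorem \ref{main theorem} the choices $U = x^{7/10}$, $V = x^{1/5}$ from \eqref{sample choices of U,V} comfortably fit, delivering the asserted bound $B(x;U,V) \ll_L (\log x)^{-\omega L + C}\, x^{1/2 + \gamma_0/2}$.
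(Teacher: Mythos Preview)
Your overall architecture is correct and matches the paper's: separate variables, Cauchy--Schwarz, pass to $\mathbb{Z}[i]$, resolve a congruence modulo an auxiliary integer $\Delta$ by the Fourier/large-sieve machinery for $F_Y$ (including the splitting off of the $10$-part of the modulus), and finally extract cancellation from $\mu$ via the zero-free region for Hecke $L$-functions, with ineffectivity coming from a possible Siegel zero. The places where your sketch deviates from the paper, however, are not cosmetic.

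First, the order in which you enter $\mathbb{Z}[i]$ and apply Cauchy--Schwarz is reversed, and your description of $\Delta$ is not the object that actually arises. In the paper one \emph{first} arranges $(m,n)=1$ (by stripping small primes from $n$ and transferring them to $m$, a nontrivial step you do not mention), then uses unique factorization to write $a(mn)=\tfrac14\sum_{|w|^2=m}\sum_{|z|^2=n}\mathbf{1}_\mathcal{A}(\mathrm{Re}(z\overline{w}))$, so that the outer sum over $m$ becomes a sum over a \emph{single} Gaussian integer $w$. Cauchy--Schwarz is then applied in $w$, producing two variables $z_1,z_2$ sharing the same $w$; the identity $iw\Delta=\ell_1 z_2-\ell_2 z_1$ with $\Delta=\mathrm{Im}(\overline{z_1}z_2)$ eliminates $w$ and yields the congruence $\ell_1 z_2\equiv \ell_2 z_1\pmod{|\Delta|}$ on which all of the subsequent Fourier analysis rests. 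If instead you Cauchy--Schwarz in the rational variable $m$ and only afterwards open each $a(mn_i)$ in $\mathbb{Z}[i]$, you obtain \emph{two} Gaussian integers $\beta_1,\beta_2$ of norm $m$ that need not coincide, and your proposed $\Delta=(\beta_1,\overline{\beta_2})$ is a gcd rather than the imaginary part of $\overline{z_1}z_2$; this gcd does not produce the linear congruence linking $\ell_1,\ell_2$ that you need, and the change of variables that removes $w$ is unavailable.

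Second, because the argument hinges on the small variable $z$ (norm $\asymp N$), the modulus $\Delta$ lives at scale $N$, not at the scale of the large variable. Several of the paper's technical manoeuvres --- putting $z_1,z_2$ into polar boxes to approximate $\Delta$ by a box-dependent quantity, discarding abnormally small $|\Delta|$, arranging $(\ell_1,|\Delta|)=1$ so that the residue class $b$ becomes rational (this is precisely why the condition $(\ell,\Pi)=1$ is imposed from the outset) --- are invisible in your sketch but are what make the congruence tractable before one can invoke Lemmas~\ref{maynard single denominator}--\ref{maynard type I}. With the corrected ordering and the correct $\Delta$, the remainder of your plan aligns with the paper.
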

Observe that Proposition \ref{main proposition} implies Theorem \ref{main theorem} by taking $L = \omega^{-1}(A+C)$. To prove Theorem \ref{main theorem} it therefore suffices to prove Proposition \ref{main proposition}, and this task occupies the remainder of the paper. In what follows we allow implied constants to depend on $L$ without indicating it in the notation.

In this section we perform technical reductions that reduce the estimation of $B(x;U,V)$ to the estimation of sums of the form
\begin{align}\label{eq: goal for first bilinear section}
\sum_m \left|\sum_{\substack{(n,m\Pi)=1}} \mu(n) a(mn) \right|.
\end{align}
Here the intervals of summation of $m$ and $n$ are independent of one another. This separation of variables is accomplished by a short interval decomposition. Once $m$ and $n$ are separated, we remove the small prime factors of $n$ and transfer them to $m$.  This has the immediate benefit of insuring that $m$ and $n$ are almost always coprime, but also confers substantial technical advantages in later calculations. The contribution from those $m$ and $n$ satisfying $(m,n) > 1$ is then trivially estimated and shown to be negligible, which gives the reduction to \eqref{eq: goal for first bilinear section} (see \eqref{bound for B1MN}).

We mention that the arguments in this section have some similarity to those in \cite[Section 10]{friediw1} and \cite[Section 4]{friediw2}.

Since $(\Lambda_{>U} \star 1)(m) \leq (\Lambda \star 1)(m) = \log m$, we see
\begin{align}\label{bxuv remove m weight}
B(x;U,V) &\leq (\log x) \sum_{U < m \leq x/V} \left| \sum_{V < n \leq x/m} \mu(n) a(mn) \right|.
\end{align}
It is easy to obtain a trivial bound for $B(x;U,V)$ that is not far from the bound of Proposition \ref{main proposition}.
\begin{lemma}\label{trivial bound for bxuv}
Let $x \geq 2$. Then
\begin{align*}
B(x;U,V) &\ll (\log x)^3 x^{\frac{1}{2} + \frac{\gamma_0}{2}}.
\end{align*}
\end{lemma}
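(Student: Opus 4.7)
The plan is to bound $B(x;U,V)$ by the trivial inequality already visible in \eqref{bxuv remove m weight}, namely $(\Lambda_{>U}\star 1)(m) \leq \log m$, and then collapse the bilinear structure to a divisor-weighted single sum over $k = mn$. Once reindexed, the sum can be handled with the $A_d(x)$ machinery already developed in section \ref{sec: sieve remainder}.

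First, starting from \eqref{bxuv remove m weight} and using $|\mu(n)| \leq 1$, I would discard the bilinear structure by writing
\begin{align*}
B(x;U,V) \leq (\log x) \sum_{m,n \,:\, mn \leq x} a(mn) = (\log x)\sum_{k \leq x} \tau(k)\, a(k).
\end{align*}
Unfolding the definition of $a(k)$ and using the symmetric divisor bound $\tau(n) \leq 2 \#\{d \mid n : d \leq \sqrt{n}\}$, the right-hand side is at most $2(\log x) \sum_{d \leq x^{1/2}} A_d(x)$.

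Next, I would split $A_d(x) = M_d(x) + R_d(x)$ and treat each piece with tools already in hand. Lemma \ref{Type I remainder} with $D = x^{1/2}$ gives
\begin{align*}
\sum_{d \leq x^{1/2}} |R_d(x)| \ll x^{5/8 + \gamma_0/4 + \epsilon},
\end{align*}
an exponent comfortably below $1/2 + \gamma_0/2$ since $\gamma_0 > 1/2$. For the main terms I would swap summation order and invoke Lemma \ref{lem: sums of rho ell} to obtain
\begin{align*}
\sum_{d \leq x^{1/2}} M_d(x) \ll (\log x)^2 \sum_{\substack{g^2+\ell^2 \leq x \\ (\ell,\Pi)=1}} \mathbf{1}_\mathcal{A}(\ell) \prod_{p \mid \ell}\Bigl(1 + \tfrac{7}{p^{1/2}}\Bigr).
\end{align*}

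The only step requiring any care is controlling the product $\prod_{p \mid \ell}(1 + 7/p^{1/2})$, which is where the sieving condition $(\ell,\Pi)=1$ earns its keep: every prime factor of $\ell$ exceeds $P$, so this product is bounded by $\exp(7\omega(\ell)/P^{1/2})$, and the crude estimate $\omega(\ell) \ll (\log x)/(\log P)$ together with the hypothesis $\log P \geq (\log\log x)^4$ forces it to be $1 + o(1)$ uniformly in $\ell$. Consequently $\sum_{d \leq x^{1/2}} M_d(x) \ll (\log x)^2\, x^{1/2+\gamma_0/2}$ by \eqref{eq: number of elts of cal A}, and combining with the remainder bound yields $\sum_{d \leq x^{1/2}} A_d(x) \ll (\log x)^2 x^{1/2+\gamma_0/2}$; the outer factor $\log x$ then produces the claimed bound.
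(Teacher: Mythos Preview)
Your proof is correct and follows essentially the same approach as the paper: collapse the bilinear sum to $(\log x)\sum_{k\le x}\tau(k)a(k)$, bound the divisor function by a sum over divisors $d\le x^{1/2}$ to reduce to $\sum_{d\le x^{1/2}}A_d(x)$, split into $M_d+R_d$, handle the remainder via Lemma~\ref{Type I remainder}, and control the main term through Lemma~\ref{lem: sums of rho ell} together with the fact that $P^-(\ell)>P$. Your write-up is slightly more explicit in a couple of places (the check that $5/8+\gamma_0/4<1/2+\gamma_0/2$, and the quantitative bound $\exp(7\omega(\ell)/P^{1/2})=1+o(1)$), but there is no substantive difference in strategy.
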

\begin{proof}
We change variables in \eqref{bxuv remove m weight} and deduce
\begin{align*}
B(x;U,V) &\ll (\log x)\sum_{k \leq x} a(k) \tau(k) \ll (\log x) \sum_{d \leq x^{1/2}} \sum_{\substack{k \leq x \\ d \mid k}} a(k) = (\log x)\sum_{d \leq x^{1/2}}A_d(x).
\end{align*}
We write $A_d(x) = M_d(x) + R_d(x)$ and use Lemma \ref{Type I remainder} to bound the sum of $R_d(x)$, giving
\begin{align}\label{rho ell difficulties}
B(x;U,V) &\ll (\log x)\mathop{\sum \sum}_{\substack{m^2 + \ell^2 \leq x \\ (\ell,\Pi)=1}} \mathbf{1}_\mathcal{A}(\ell) \sum_{d \leq x^{1/2}} \frac{\rho_\ell(d)}{d}.
\end{align}
By Lemma \ref{lem: sums of rho ell} we have
\begin{align*}
B(x;U,V) &\ll (\log x)^3 \mathop{\sum \sum}_{\substack{m^2 + \ell^2 \leq x \\ (\ell,\Pi)=1}} \mathbf{1}_\mathcal{A}(\ell) \prod_{p \mid \ell} \left(1 + \frac{7}{p^{1/2}}\right) \ll (\log x)^3 x^{\frac{1}{2} + \frac{\gamma_0}{2}},
\end{align*}
the last inequality following since $P^-(\ell) > P$.
\end{proof}

In the proof of Lemma \ref{trivial bound for bxuv} we used Lemma \ref{lem: sums of rho ell} to control averages of $\rho_\ell$. We shall need more elaborate versions of this argument in several of our reductions of $B(x;U,V)$.

Our first step is to separate the variables $m$ and $n$ so that they run independently over intervals of summation. We accomplish this with a short interval decomposition. Set 
\begin{align}\label{defn of theta}
\theta = (\log x)^{-L},
\end{align}
where $L>0$ is fixed as in Proposition \ref{main proposition}. We break the summation over $n$ into subintervals $N < n \leq (1+\theta) N$. By the triangle inequality,
\begin{align*}
B(x;U,V) &\leq (\log x)\sum_{\substack{N = (1+\theta)^j V \\ j \geq 0 \\ N \leq x/U}} \ \sum_{U < m \leq x/V} \left|\sum_{\substack{N < n \leq (1+\theta) N \\ mn \leq x}} \mu(n) a(mn) \right|.
\end{align*}
We wish to replace the condition $mn\leq x$ with $mN \leq x$. Clearly $mn \leq x$ implies $mN \leq x$ since $n > N$. Thus, suppose $mN \leq x$ but $mn > x$. Then
\begin{align*}
x < mn \leq (1+\theta)mN \leq (1+\theta) x,
\end{align*}
and so for fixed $N$ we have
\begin{align*}
\sum_{U < m \leq x/V} \left|\sum_{\substack{N < n \leq (1+\theta) N \\ mN \leq x \\ mn > x}} \mu(n) a(mn) \right| &\leq \sum_{N < n \leq (1+\theta)N} \mu^2(n) \sum_{\substack{x < k \leq (1+\theta)x \\ n \mid k}} a(k).
\end{align*}
In order to apply Lemma \ref{Type I remainder} we require $N \leq x/U \ll x^{\gamma_0 - \epsilon}$, which holds provided
\begin{align}\label{first bound for U}
U \geq x^{1-\gamma_0 + \epsilon}.
\end{align}
This supersedes the lower bound for $U$ in \eqref{intermediate restrictions on U,V}. Assuming \eqref{first bound for U} we obtain by Lemma \ref{Type I remainder}
\begin{align*}
&\sum_{N < n \leq (1+\theta)N} \mu^2(n) \sum_{\substack{x < k \leq (1+\theta)x \\ n \mid k}} a(k) \ll \mathop{\sum \sum}_{x < m^2 + \ell^2 \leq (1+\theta)x}\mathbf{1}_\mathcal{A}(\ell) \sum_{N < n \leq (1+\theta)N} \frac{\mu^2(n)\rho_\ell(n)}{n} \\
&\ll \mathop{\sum \sum}_{x < m^2 + \ell^2 \leq (1+\theta)x}\mathbf{1}_\mathcal{A}(\ell) \sum_{N < n \leq (1+\theta)N} \frac{\tau(n)}{n} \ll \theta (\log N)\mathop{\sum \sum}_{x < m^2 + \ell^2 \leq (1+\theta)x}\mathbf{1}_\mathcal{A}(\ell).
\end{align*}
We have
\begin{align*}
\mathop{\sum \sum}_{x < m^2 + \ell^2 \leq (1+\theta)x}\mathbf{1}_\mathcal{A}(\ell) &= \sum_{\ell \leq \sqrt{(1+\theta)x}} \mathbf{1}_\mathcal{A}(\ell) \ \sum_{\sqrt{x - \ell^2} < |m| \leq \sqrt{(1+\theta)x - \ell^2}}1 \\
&\ll \theta^{1/2} x^{1/2} \sum_{\ell \ll x^{1/2}} \mathbf{1}_\mathcal{A}(\ell) \ll \theta^{1/2}x^{\frac{1}{2} + \frac{\gamma_0}{2}}.
\end{align*}
Since the number of intervals $N < n \leq (1+\theta)N$ is $\ll (\log x) \theta^{-1}$ we see
\begin{align}\label{bxuv after sep vars}
B(x;U,V) &\ll (\log x)\sum_{\substack{N = (1+\theta)^j V \\ j \geq 0 \\ N \leq x/U}} \sum_{\substack{U < m \leq x/N}} \left|\sum_{\substack{N < n \leq (1+\theta) N}} \mu(n) a(mn) \right| + O((\log x)^3 \theta^{1/2} x^{\frac{1}{2} + \frac{\gamma_0}{2}}).
\end{align}

We now fix one such $N$ with $V \leq N \leq x/U$, and perform a dyadic decomposition on the range of $m$, which yields
\begin{align*}
\sum_{\substack{U < m \leq x/N}} \left|\sum_{\substack{N < n \leq (1+\theta) N}} \mu(n) a(mn) \right| &\leq \sum_{\substack{M = 2^j U \\ j \geq 0 \\ MN \leq x}} \ \sum_{M < m \leq 2M} \left|\sum_{\substack{N < n \leq (1+\theta) N}} \mu(n) a(mn) \right|.
\end{align*}
We define
\begin{align*}
B_1(M,N) &:= \sum_{M < m \leq 2M} \left|\sum_{\substack{N < n \leq (1+\theta) N}} \mu(n) a(mn) \right|,
\end{align*}
and note that the variables $m$ and $n$ are separated in $B_1(M,N)$.

Observe that if $MN \leq \theta x$ then
\begin{align}\label{bound on BMN for small MN}
B_1(M,N) &\ll \sum_{N < n \leq (1+\theta)N} \mu^2(n) \sum_{\substack{k \ll \theta x \\ n \mid k}} a(k) \ll (\log N) \theta^{1 + \gamma_0} x^{\frac{1}{2} + \frac{\gamma_0}{2}},
\end{align}
the latter inequality following essentially by the argument that gave \eqref{bxuv after sep vars}. In order to prove Proposition \ref{main proposition} it suffices by virtue of \eqref{bxuv after sep vars} and \eqref{bound on BMN for small MN} to prove the following result.
\begin{proposition}\label{prop for BMN}
Let $x$ be large, and for $L > 0$ fixed set $\theta = (\log x)^{-L}$. We then have
\begin{align*}
B_1(M,N) &\ll_{\epsilon,L} (\log MN)^{O(1)} \theta^{5/2} (MN)^{\frac{1}{2} + \frac{\gamma_0}{2}}
\end{align*}
uniformly in
\begin{align}\label{restrictions on N}
x^{1/2 - \gamma_0/2 + \epsilon} \leq N \leq x^{25/77 - \epsilon}, \ \ \ \ \ \  \theta x < MN \leq x.
\end{align}
The implied constant is ineffective.
\end{proposition}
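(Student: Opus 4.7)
First I would use the technique sketched at the start of Section~\ref{sec: first bilinear section} to peel off the small prime factors of $n$ and transfer them to $m$, so that one may assume $(n, m\Pi)=1$; the leftover contribution with $(n,m)>1$ is acceptable because $(n,\Pi)=1$ forces any common prime factor to exceed $P$. Introducing unimodular weights $c_m$ to unfold the absolute value and applying Cauchy--Schwarz in $m$ yields
\[
B_1(M,N)^2 \ll M \mathop{\sum \sum}_{N < n_1, n_2 \leq (1+\theta) N} \mu(n_1)\mu(n_2) \sum_{M < m \leq 2M} a(mn_1) \overline{a(mn_2)}.
\]
The diagonal $n_1=n_2$ contributes an acceptable amount by trivial estimation combined with Lemma~\ref{lem: sums of rho ell} and the bound $\sum_{n \leq z} a(n) \ll z^{(1+\gamma_0)/2}$; the real work is in the off-diagonal.

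\textbf{Step 2: Passage to the Gaussian integers.} Expanding $a(mn_j)$ rewrites the off-diagonal as a count of tuples $(g_1,\ell_1,g_2,\ell_2)$ with $g_j^2+\ell_j^2=mn_j$, $\ell_j\in\mathcal{A}$, and $(\ell_j,\Pi)=1$. Eliminating $m$ gives $n_2(g_1^2+\ell_1^2)=n_1(g_2^2+\ell_2^2)$. Factoring with $z_j=g_j+i\ell_j\in\mathbb{Z}[i]$ and $\delta=(n_1,n_2)$, this becomes $(n_2/\delta) z_1\bar z_1 = (n_1/\delta)z_2\bar z_2$. Decomposing $n_1/\delta$ and $n_2/\delta$ over split/inert/ramified Gaussian primes and tracking ideal divisibility and units, I can rewrite the count as a bilinear sum over $z_1,z_2\in\mathbb{Z}[i]$ lying in appropriate polar boxes and constrained by a congruence $\ell_2\equiv\lambda\ell_1\pmod{\Delta}$, with $\Delta\mid n_1n_2$ and $\lambda^2\equiv-1\pmod{\Delta}$.

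\textbf{Step 3: Removing the congruence via the Fourier structure of $\mathcal{A}$.} Detect the congruence modulo $\Delta$ by additive characters $e(r/\Delta)$. The frequency $r=0$ is carried along as a ``main term'' for the off-diagonal. For the nonzero frequencies I would decompose $[0,N]$ into blocks of length a power of ten (as in Lemma~\ref{prototypical estimation lemma one}), so that each $\sum_\ell \mathbf{1}_\mathcal{A}(\ell)\, e(r\ell/\Delta)$ becomes a value of $F_Y$ at a rational, and then invoke Lemmas~\ref{maynard small moduli}, \ref{maynard single denominator}, and \ref{maynard type I}. The constraint $\Delta\ll N^2\leq x^{50/77-\epsilon}$ from \eqref{restrictions on N} is precisely what the $50/77$ exponent in those bounds demands. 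Delicate bookkeeping is required when $\Delta$ shares a factor with $10$, since $\mathcal{A}$ is not equidistributed in arithmetic progressions modulo $10^k$; this is the content of Sections~\ref{sec: congruence exercises} and \ref{sec: polar boxes}, where the factor of $\Delta$ supported on $\{2,5\}$ must be pulled out and handled by direct digit-counting.

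\textbf{Step 4: The main obstacle --- cancellation from $\mu$.} After Step~3 one is left with the $r=0$ contribution, which still contains the signed weights $\mu(n_1)\mu(n_2)$ and is the heart of the argument. Following Friedlander--Iwaniec, I would re-interpret the remaining sum over $z_1,z_2$ as an ideal count in $\mathbb{Q}(i)$ and open it up using Hecke Gr\"ossencharacters, converting the $\mu$-sum into a contour integral against Hecke $L$-functions. Shifting past $\mathrm{Re}(s)=1$ into a standard zero-free region yields a saving of the shape $\exp(-c\sqrt{\log x})$, which dominates $\theta^{-5/2}=(\log x)^{5L/2}$ and any fixed power of $\log x$. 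Combined with the $M^{1/2}$ loss from Cauchy--Schwarz and the Step~3 estimates, this gives the bound $B_1(M,N)\ll\theta^{5/2}(MN)^{(1+\gamma_0)/2}(\log MN)^{O(1)}$. The ineffectivity of the implied constant stems from the possibility of Siegel-type exceptional zeros of these Hecke $L$-functions, matching the remark following Theorem~\ref{main theorem}.
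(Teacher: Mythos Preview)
Your Steps 3 and 4 capture the right flavor of the later sections, but the argument breaks down at Steps 1--2, and the breakdown propagates into the numerics of Step 3.

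The crucial point, made explicit at the start of Section~\ref{sec: bilinear form transformations}, is that one must pass to $\mathbb{Z}[i]$ \emph{before} applying Cauchy--Schwarz. Once $(m,n)=1$ one writes $a(mn)=\tfrac14\sum_{|w|^2=m}\sum_{|z|^2=n}\mathbf 1_{\mathcal A}(\mathrm{Re}(z\bar w))$ and then applies Cauchy--Schwarz in the Gaussian variable $w$. The payoff is the linear identity $iw\Delta=\ell_1 z_2-\ell_2 z_1$ (equation \eqref{relation btwn w delta ells and zs}) with $\Delta=\mathrm{Im}(\bar z_1 z_2)$, so that $|\Delta|\le |z_1||z_2|\asymp N$. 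Your order --- Cauchy--Schwarz in the rational variable $m$, then expand $a(mn_1)a(mn_2)$ --- produces pairs $(w_1,w_2)$ with $|w_1|^2=|w_2|^2=m$ but $w_1\neq w_2$, and the norm equation $(n_2/\delta)z_1\bar z_1=(n_1/\delta)z_2\bar z_2$ you arrive at is quadratic, not linear. No congruence $\ell_2\equiv\lambda\ell_1\pmod\Delta$ with $\lambda^2\equiv -1\pmod\Delta$ falls out of it; that assertion in Step~2 is unsubstantiated, and the combinatorial ``tracking ideal divisibility and units'' you allude to is not a small technicality but the heart of the matter.

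Even granting your congruence, the exponent bookkeeping in Step~3 is off by a factor of $2$. Lemmas \ref{maynard single denominator}--\ref{maynard type I} control moduli up to $X^{50/77}$, where $X$ is the length of the $\mathbf 1_{\mathcal A}$-sum; here $X\asymp (MN)^{1/2}\le x^{1/2}$, so the admissible modulus is $\ll x^{25/77}$, not $x^{50/77}$. This is exactly why the paper's $|\Delta|\ll N$ forces $N\le x^{25/77-\epsilon}$ in \eqref{restrictions on N}. If your modulus could genuinely reach $N^2$ (as $\Delta\mid n_1n_2$ allows), you would need $N\le x^{25/154}$, and the range \eqref{restrictions on N} would be empty.
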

It is not yet apparent why $N$ must be of the size given in \eqref{restrictions on N}. We gradually introduce stronger conditions on $N$ as the proof requires, and find in the last instance that \eqref{restrictions on N} is sufficient.

Now that the variables $m$ and $n$ are separated from one another, we wish to remove the small prime factors from $n$. We write $n = n_0 n_1$, where $(n_0,\Pi) = 1$ and $n_1 \mid \Pi$, then set
\begin{align*}
C &= \exp ((\log P)^2)
\end{align*}
(there should be no cause to confuse the $C$ given here with the absolute constant $C$ in Proposition \ref{main proposition}). Observe that $C>P$, and that $C = x^{o(1)}$ by our upper bound for $\log P$ in Theorem \ref{main theorem}.

We first show that the contribution from $n_1 > C$ to $B_1(M,N)$ is negligible. If $n_1$ divides $\Pi$ and $n_1 > C$, then there is a divisor $d$ of $n_1$ that satisfies $C < d \leq CP$. Indeed, writing $n_1 = p_1 \cdots p_r$ where $p_1 < \cdots < p_r$, we see there is a minimal $j$ such that $p_1 \cdots p_j \leq C$ but $p_1 \cdots p_{j+1} > C$. The desired divisor is $d = p_1 \cdots p_{j+1}$. The contribution to $B_1(M,N)$ from $n_1 > C$ is
\begin{align*}
\sum_{M < m \leq 2M} &\left|\mathop{\sum \sum}_{\substack{N < n_0n_1 \leq (1+\theta)N \\ (n_0,\Pi)=1 \\ n_1 \mid \Pi \\ n_1 > C}} \mu(n_0) \mu(n_1) a(mn_0n_1) \right| \leq \sum_{\substack{C < d \leq CP \\ d \mid \Pi}} \sum_{\substack{n \ll MN \\ d | n}} a(n) \tau_3(n) \\
&\leq \sum_{\substack{C < d \leq CP \\ d \mid \Pi}} \sum_{\substack{n \ll MN \\ d | n}} a(n) \tau(n)^2 =: B_1',
\end{align*}
say. We utilize the following lemma to handle the divisor function.
\begin{lemma}\label{div reduc lemma}
For any $n,k \geq 1$ there exists a divisor $d \mid n$ such that $d \leq n^{1/2^k}$ and
\begin{align*}
\tau(n) \leq 2^{2^k-1} \tau(d)^{2^k}.
\end{align*}
\end{lemma}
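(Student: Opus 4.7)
The plan is to prove the lemma by induction on $k$, with the real work concentrated in the base case $k = 1$: for every $n \geq 1$ there exists $d \mid n$ with $d \leq n^{1/2}$ and $\tau(n) \leq 2\tau(d)^2$. Once this is in hand, the inductive step is automatic: given $d_1 \mid n$ with $d_1 \leq n^{1/2^k}$ and $\tau(n) \leq 2^{2^k - 1}\tau(d_1)^{2^k}$, applying the base case to $d_1$ produces $d \mid d_1$ with $d \leq d_1^{1/2} \leq n^{1/2^{k+1}}$ and $\tau(d_1) \leq 2\tau(d)^2$; substituting gives $\tau(n) \leq 2^{2^k - 1}(2\tau(d)^2)^{2^k} = 2^{2^{k+1} - 1}\tau(d)^{2^{k+1}}$, as required.

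For the base case I would induct on $\Omega(n)$. Suppose first that some prime $p$ satisfies $p^a \| n$ with $a \geq 2$. Write $n = p^a m$ with $(p,m) = 1$, apply the inductive hypothesis to $m$ to obtain $d' \mid m$ with $d' \leq \sqrt{m}$ and $\tau(m) \leq 2\tau(d')^2$, and set $d = p^{\lfloor a/2\rfloor} d'$. The size bound $d \leq p^{a/2}\sqrt{m} = \sqrt{n}$ is immediate, and multiplicativity of $\tau$ reduces the claimed inequality to the elementary estimate $a + 1 \leq (\lfloor a/2\rfloor + 1)^2$, which holds for every $a \geq 2$.

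The remaining case is that $n$ is squarefree. Writing $n = p_1 p_2 \cdots p_r$ with $p_1 < p_2 < \cdots < p_r$, I would take $d = p_1 p_2 \cdots p_{\lfloor r/2\rfloor}$. The bound $d \leq \sqrt{n}$ follows because the complementary product $p_{\lfloor r/2\rfloor + 1} \cdots p_r$ has at least as many factors and each of them is strictly larger than every factor in $d$; the $\tau$-bound $\tau(n) = 2^r \leq 2 \cdot 2^{2\lfloor r/2\rfloor} = 2\tau(d)^2$ then follows by separating the parity of $r$ (equality occurs when $r$ is odd).

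The main obstacle is precisely the squarefree case. The naive choice $d = \prod p^{\lfloor a_p/2\rfloor}$ that works smoothly in the non-squarefree case collapses to $d = 1$ when every $a_p = 1$, losing a factor of $2$ for every prime dividing $n$ and making the bound vacuous. The fix, and the source of the single factor of $2$ on the right-hand side of the lemma, is to exploit the ordering of the primes so that the ``halved'' product carries $\lfloor r/2\rfloor$ of the divisors into $\tau(d)$; squaring then recovers $\tau(n)$ up to the unavoidable loss when $r$ is odd.
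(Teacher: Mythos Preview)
Your argument is correct. The induction from $k$ to $k+1$ is clean, and the base case $k=1$ is handled properly: the non-squarefree reduction uses $a+1 \leq (\lfloor a/2\rfloor + 1)^2$ for $a \geq 2$, and the squarefree case with $d = p_1\cdots p_{\lfloor r/2\rfloor}$ gives exactly $\tau(n) = 2^r \leq 2\cdot 2^{2\lfloor r/2\rfloor} = 2\tau(d)^2$, with equality when $r$ is odd.

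The paper does not give a proof at all; it simply cites \cite[Lemma~4]{hbli}. So your route is not just different but strictly more informative: you supply a self-contained elementary argument where the paper appeals to an external reference. One small presentational point: the induction on $\Omega(n)$ is slightly obscured by the fact that the squarefree branch does not invoke the inductive hypothesis. You could streamline by saying that the squarefree case is handled directly for all $r$, and that the non-squarefree case strictly reduces $\Omega$; either way the argument terminates and is valid.
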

\begin{proof}
This is \cite[Lemma 4]{hbli}.
\end{proof}
Applying Lemma \ref{div reduc lemma} with $k=2$ yields
\begin{align*}
B_1' &\ll \sum_{\substack{C < d \leq CP \\ d \mid \Pi}} \ \sum_{e \ll (MN)^{1/4}}\tau(e)^8 \sum_{\substack{n \ll MN \\ [d,e] \mid n}} a(n),
\end{align*}
where $[d,e]$ is the least common multiple of $d$ and $e$. By trivial estimation (i.e. no need for recourse to Lemma \ref{Type I remainder} since $[d,e] \leq (MN)^{1/4 + o(1)}$ is so small) we find that
\begin{align*}
\sum_{\substack{n \ll MN \\ [d,e] \mid n}} a(n) &\ll (MN)^{1/2} \sum_{\substack{\ell \leq (MN)^{1/2} \\ (\ell,\Pi)=1}} \mathbf{1}_\mathcal{A}(\ell) \frac{\rho_\ell([d,e])}{[d,e]}.
\end{align*}

Recall that $P^+(n)$ and $P^-(n)$ denote the greatest and least prime factors of $n$, respectively. We factor $e$ uniquely as $e = rs$, where $P^+(r) \leq P$ and $P^-(s) > P$. Thus
\begin{align*}
&B_1' \ll (MN)^{1/2} \sum_{\substack{\ell \ll (MN)^{1/2} \\ (\ell,\Pi)=1}} \mathbf{1}_\mathcal{A}(\ell) \sum_{\substack{C < d \leq CP \\ d \mid \Pi}} \sum_{e \ll (MN)^{1/4}} \frac{\tau(e)^8 \rho_\ell([d,e])}{[d,e]} \\
&\ll (MN)^{1/2} \sum_{\substack{\ell \ll (MN)^{1/2} \\ (\ell,\Pi)=1}} \mathbf{1}_\mathcal{A}(\ell) \sum_{\substack{C < d \leq CP \\ d \mid \Pi}} \sum_{\substack{r \ll (MN)^{1/4} \\ P^+(r) \leq P}} \frac{\tau(r)^8 \rho_{\ell}([d,r])}{[d,r]} \sum_{\substack{s \ll (MN)^{1/4} \\ P^-(s) > P}} \frac{\tau(s)^8\rho_\ell(s)}{s}.
\end{align*}
We bound the sum over $s$ by working as in Lemma \ref{lem: sums of rho ell}. We have
\begin{align}\label{separate ell and coprime to ell}
&\sum_{\substack{s \ll (MN)^{1/4} \\ P^-(s) > P}} \frac{\tau(s)^8\rho_\ell(s)}{s} \leq \sum_{s \ll (MN)^{1/4}} \frac{\tau(s)^8\rho_\ell(s)}{s} \leq \sum_{d \mid \ell^\infty} \frac{\tau(d)^8 \rho_\ell(d)}{d} \sum_{\substack{t \ll (MN)^{1/4} \\ (t,\ell)=1}} \frac{\tau(t)^9}{t} \\
&\ll (\log MN)^{2^9} \prod_{p \mid \ell} \left(1 + \frac{2^{9}}{p^{1/2}} \right)\ll (\log MN)^{2^9}. \nonumber
\end{align}
By \eqref{separate ell and coprime to ell} and the change of variables $n = [d,r]$, we obtain
\begin{align*}
B_1' &\ll (\log MN)^{2^9} \sum_{\substack{\ell \ll (MN)^{1/2} \\ (\ell,\Pi)=1}} \mathbf{1}_\mathcal{A}(\ell) \sum_{\substack{n > C \\ P^+(n) \leq P}} \frac{\tau(n)^8 \tau_3(n) \rho_\ell(n)}{n}.
\end{align*}
Since $P^-(\ell) > P$ we see that $(n,\ell) = 1$, and therefore $\rho_\ell(n) \leq \tau(n)$. Set $\varepsilon = (\log P)^{-1}$. By Rankin's trick and the inequality $\tau_3(n) \leq \tau(n)^2$ we obtain
\begin{align*}
\sum_{\substack{n > C \\ P^+(n) \leq P}} \frac{\tau(n)^8 \tau_3(n) \rho_\ell(n)}{n} &\leq C^{-\varepsilon} \sum_{P^+(n) \leq P} \frac{\tau(n)^{11}}{n^{1-\varepsilon}} = C^{-\varepsilon} \prod_{p \leq P} \left(1 + \sum_{k=1}^\infty \frac{\tau(p^k)^{11}}{p^{k(1-\varepsilon)}} \right) \\
&\ll C^{-\varepsilon} \prod_{p \leq P} \left(1 + \frac{2^{11}}{p^{1-\varepsilon}}\right)\leq C^{-\varepsilon} \prod_{p \leq P} \left(1 + \frac{2^{14}}{p^{1+\varepsilon}}\right).
\end{align*}
The last inequality follows since $p^{2\varepsilon} \leq e^2 < 8$. We finish by observing that
\begin{align*}
C^{-\varepsilon} \prod_{p \leq P} \left(1 + \frac{2^{14}}{p^{1+\varepsilon}}\right) &\leq C^{-\varepsilon} \zeta(1+\varepsilon)^{2^{14}} \ll C^{-\varepsilon} \varepsilon^{-2^{14}} \leq (\log MN)^{2^{13}} P^{-1}.
\end{align*}
We deduce
\begin{align}\label{n1 bigger than C}
B_1' &\ll (\log MN)^{2^{14}} (MN)^{\frac{1}{2}+\frac{\gamma_0}{2}} P^{-1}.
\end{align}
By \eqref{n1 bigger than C} and our lower bound for $P$ the contribution from $n_1 > C$ is acceptable for Proposition \ref{prop for BMN}. It follows that
\begin{align}\label{intermediate BMN}
B_1(M,N) &\ll \theta^{5/2}(MN)^{\frac{1}{2}+\frac{\gamma_0}{2}} + \sum_{M < m \leq 2M} \sum_{\substack{n_1 \leq C \\ n_1 \mid \Pi}} \left|\sum_{\substack{N < n_0n_1 \leq (1+\theta)N \\ (n_0,\Pi)=1}} \mu(n_0) a(mn_0n_1) \right|.
\end{align}

We wish to make $mn_1$ into a single variable, but before we can do this we need to separate the variables $n_0$ and $n_1$. We achieve this with another short interval decomposition. We are reduced to studying
\begin{align*}
\sum_{\substack{G = (1+\theta^{5/2})^j \\ j \geq -1 \\ G \leq C}} \sum_{M < m \leq 2M} \sum_{\substack{G < n_1 \leq (1+\theta^{5/2})G \\ n_1 \mid \Pi}} \left|\sum_{\substack{N < n_0n_1 \leq (1+\theta)N \\ (n_0,\Pi)=1}} \mu(n_0) a(mn_0n_1) \right|.
\end{align*}
In the sum over $n_0$ we wish to replace the conditions $N < n_0n_1$ and $n_0n_1 \leq (1+\theta)N$ by the conditions $N < n_0G$ and $n_0 G \leq (1+\theta)N$, respectively. If $n_0n_1 > N$ but $n_0 G \leq N$, then
\begin{align*}
N < n_0n_1 \leq (1+\theta^{5/2})n_0 G \leq (1+\theta^{5/2}) N,
\end{align*}
and the error in replacing the condition $n_0n_1 > N$ by $n_0 G > N$ is
\begin{align*}
\ll (\log C)\theta^{-5/2} \sup_{G \leq C} \sum_{\substack{G < n_1 \leq (1+\theta^{5/2})G}} \mu^2(n_1) \ \ \sum_{(1+\theta^{5/2})^{-1}N/G < n_0 \leq (1+\theta^{5/2})N/G} \mu^2(n_0) \sum_{\substack{n \leq 3MN \\ n_0n_1 | n}} a(n).
\end{align*}
We write these three sums as
\begin{align*}
\sum_{n_1}\sum_{n_0} A_{n_0n_1}(3MN)= \sum_{n_1} \sum_{n_0}\left(M_{n_0n_1}(3MN) + R_{n_0n_1}(3MN) \right).
\end{align*}
To estimate the remainder term we change variables
\begin{align*}
\sum_{n_1} \sum_{n_0}|R_{n_0n_1}(3MN)| \leq \sum_d \tau(d) |R_d(3MN)|,
\end{align*}
then apply the divisor bound and Lemma \ref{Type I remainder}. We estimate the main term as we have before, and find that
\begin{align*}
\sum_{n_1}\sum_{n_0} A_{n_0n_1}(3MN) \ll (\log MN)^{O(1)}\theta^{5/2} (MN)^{\frac{1}{2}+\frac{\gamma_0}{2}}. 
\end{align*}
We similarly acquire the condition $n_0 G \leq (1+\theta) N$. We then change variables $mn_1 \rightarrow m, n_0 \rightarrow n$ to obtain
\begin{align}\label{BMN with small primes removed}
B_1(M,N) &\ll (\log MN)^{O(1)}\theta^{5/2} (MN)^{\frac{1}{2}+\frac{\gamma_0}{2}} \\ 
&+ (\log MN)\theta^{-5/2} \sup_{G \leq C} \sum_{MG < m \leq 2 (1+\theta^{5/2})MG} \tau(m) \left|\sum_{\substack{N/G < n_0 \leq (1+\theta)N/G \\ (n,\Pi)=1}} \mu(n) a(mn) \right|. \nonumber
\end{align}
In order to prove Proposition \ref{prop for BMN} it therefore suffices to show that
\begin{align}\label{bound for B prime MN}
B_2(M,N) &:= \sum_{M < m \leq 2M} \tau(m) \left|\sum_{\substack{N < n \leq (1+\theta)N \\ (n,\Pi)=1}} \mu(n) a(mn) \right| \ll \theta^5 (\log MN)^{O(1)} (MN)^{\frac{1}{2}+\frac{\gamma_0}{2}}
\end{align}
uniformly in
\begin{align}\label{restrictions on N after remove small primes}
x^{1/2 - \gamma_0/2 + \epsilon} \leq N \leq x^{25/77 - \epsilon}, \ \ \ \ \ \theta x \ll MN \ll x.
\end{align}
Note the slight (inconsequential) difference between \eqref{restrictions on N after remove small primes} and \eqref{restrictions on N}.

We have removed the small prime factors from $n$. This will aid us in making $m$ and $n$ coprime, which in turn will allow us to perform a factorization of our bilinear form over $\mathbb{Z}[i]$. Before estimating the contribution of those $m$ and $n$ which are not coprime, however, it is useful to remove the divisor function on $m$, as it is more difficult to deal with later. By the Cauchy-Schwarz inequality
\begin{align*}
B_2(M,N) &\leq B_3(M,N)^{1/2} B_2'(M,N)^{1/2},
\end{align*}
where
\begin{align*}
B_3(M,N) &:= \sum_{M < m \leq 2M} \left|\sum_{\substack{N < n \leq (1+\theta)N \\ (n,\Pi)=1}} \mu(n) a(mn) \right|, \\
B_2'(M,N) &:= \sum_{M < m \leq 2M} \tau(m)^2 \left|\sum_{\substack{N < n \leq (1+\theta)N \\ (n,\Pi)=1}} \mu(n) a(mn) \right|.
\end{align*}
We bound $B_2'(M,N)$ trivially.

\begin{lemma}\label{trivial bound for B '' MN}
We have $B_2'(M,N) \ll \theta (\log MN)^{2^{23}} (MN)^{\frac{1}{2}+\frac{\gamma_0}{2}}.$
\end{lemma}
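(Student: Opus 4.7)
My plan is to drop $\mu(n)$ via the triangle inequality, reducing to $B_2'(M,N)\leq\sum_{m\in I_M}\tau(m)^2\sum_{n\in I_N}a(mn)$. Rewriting the inner sum as $A_m((1+\theta)mN)-A_m(mN)$ and decomposing $A_m = M_m+R_m$ following section \ref{sec: outline}, I would treat the main-term and remainder contributions separately.

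For the main-term contribution $\sum_m\tau(m)^2(M_m((1+\theta)mN)-M_m(mN))$, unfolding the definition of $M_m$ and swapping the $m$-sum with the $(g,\ell)$-sum shows that the inner $m$-sum ranges over an interval $J\subset(M,2M]$ of length $\ll\theta M$ (namely those $m$ for which $g^2+\ell^2\in(mN,(1+\theta)mN]$). Using the coarse bound $\rho_\ell(m)\leq\tau(m)^2$ together with the standard short-interval moment estimate $\sum_{m\in J}\tau(m)^4/m\ll\theta(\log M)^{15}$ (valid since $\theta M\geq M^\epsilon$) and the density bound \eqref{eq: number of elts of cal A} for the outer sum over $\mathcal{A}$-restricted $\ell$, this contribution is $\ll\theta(\log MN)^{O(1)}(MN)^{(1+\gamma_0)/2}$. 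The condition $(\ell,\Pi)=1$ ensures that any common prime factor of $\ell$ and $m$ exceeds $P$, so the factor $\prod_{p\mid\ell}(1+7/p^{1/2})$ coming from Lemma \ref{lem: sums of rho ell} is $O(1)$.

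For the remainder contribution $\sum_m\tau(m)^2|R_m((1+\theta)mN)-R_m(mN)|$, I would apply the divisor-reduction Lemma \ref{div reduc lemma} with $k$ on the order of $22$, writing $\tau(m)^2\leq4^{2^k-1}\tau(d)^{2^{k+1}}$ for some $d\mid m$ with $d\leq m^{1/2^k}$. Dropping the constraint $d\mid m$ in the inner sum and invoking Lemma \ref{Type I remainder} yields an upper bound of order $M^{1/4+1/2^k}(MN)^{1/2+\gamma_0/4+\epsilon}$ times a Selberg--Delange-type factor $(\log MN)^{O(2^{2^{k+1}})}$ coming from the outer sum $\sum_{d\leq M^{1/2^k}}\tau(d)^{2^{k+1}}$. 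Choosing $k\approx22$ produces the claimed $(\log MN)^{2^{23}}$-loss while keeping $M^{1/4+1/2^k}\leq M^{1/4+\epsilon}$, which together with the size restriction $N\leq x^{25/77-\epsilon}$ from \eqref{restrictions on N after remove small primes} is enough to absorb the remainder into $\theta(MN)^{(1+\gamma_0)/2}(\log MN)^{2^{23}}$.

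The main obstacle is verifying that the short-interval $\tau^4$-moment estimate really does save a factor of $\theta$ (this is the step that supplies the $\theta$-factor in the claim, whereas the remainder term contributes only the $(\log MN)^{2^{23}}$-factor loss); one must also cleanly handle the $\rho_\ell$-weights using that $(\ell,\Pi)=1$ forces any shared prime factor of $\ell,m$ to be larger than $P$. The estimation is "trivial" in the sense that it requires no Fourier analysis on $\mathcal{A}$ beyond the elementary bound \eqref{eq: number of elts of cal A}.
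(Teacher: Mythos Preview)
Your remainder treatment does not close. After dropping $d\mid m$ you are left with
\[
\sum_{M<m\le 2M}\bigl(|R_m((1+\theta)mN)|+|R_m(mN)|\bigr),
\]
and even granting that Lemma~\ref{Type I remainder} could be applied with the $m$-dependent argument (which already requires an extra argument, since the lemma is stated for a fixed upper limit), the output is $\ll M^{1/4}(MN)^{1/2+\gamma_0/4+\epsilon}$. For this to be $\ll (MN)^{1/2+\gamma_0/2}$ one needs $M\ll (MN)^{\gamma_0-\epsilon}\asymp x^{\gamma_0-\epsilon}$, but under \eqref{restrictions on N after remove small primes} we only have $M\ll x/N\ll x^{1/2+\gamma_0/2-\epsilon}$, and $1/2+\gamma_0/2>\gamma_0$ since $\gamma_0<1$. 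So the remainder bound is genuinely too weak in the full range. The extra factor $M^{1/2^k}$ from the outer $d$-sum only makes matters worse, and incidentally the Selberg--Delange exponent you get from $\sum_{d\le M^{1/2^k}}\tau(d)^{2^{k+1}}$ is $2^{2^{k+1}}-1$, not $2^{k+1}$, so $k\approx 22$ would give an astronomically large log-power rather than $2^{23}$.

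The paper avoids this by first bounding $\tau(m)^2\le\tau(mn)^2=\tau(k)^2$ and only then applying Lemma~\ref{div reduc lemma} (with parameter $2$, not $22$) to $\tau(k)^2$, producing a divisor $e\mid k$ with $e\ll(MN)^{1/4}$. The congruence modulus in the remainder then becomes $[n,e]\ll N(MN)^{1/4}$, and the constraint $N\ll x^{1/2}$ (imposed at \eqref{second bound for U}) forces $N(MN)^{1/4}\ll x^{3/4}\ll x^{\gamma_0-\epsilon}$, which \emph{is} within range for Lemma~\ref{Type I remainder}. In short: the divisor reduction must be applied to the product $k=mn$ so that the short variable $n$ carries the bulk of the modulus; applying it to $m$ alone leaves the modulus at size $M$, which is too large. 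Your main-term analysis is closer to correct, though the pointwise bound $\rho_\ell(m)\le\tau(m)^2$ fails whenever $m$ has a repeated prime factor dividing $\ell$ (e.g.\ $\rho_\ell(p^2)$ can be of order $p$); you would need to split off the $\ell^\infty$-part of $m$ as in Lemma~\ref{lem: sums of rho ell} before invoking a short-interval $\tau$-moment.
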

\begin{proof}
We have the trivial bound
\begin{align*}
B_2'(M,N) &\ll \sum_{N < n \leq (1+\theta)N} \mu^2(n) \sum_{\substack{k \leq 3MN \\ d|k}} a(k) \tau(k)^2.
\end{align*}
We impose here a more severe condition on $U$, and therefore $N$, than \eqref{first bound for U}. We require
\begin{align}\label{second bound for U}
U \geq x^{1/2},
\end{align}
which implies $N \ll x^{1/2}$. Stricter conditions than \eqref{second bound for U} are imposed later, so there is no loss in imposing this condition now. We apply Lemma \ref{div reduc lemma} with $k = 2$ to arrive at
\begin{align*}
B_2'(M,N) &\ll \sum_{N < n \leq (1+\theta)N} \sum_{e \ll (MN)^{1/4}} \tau(e)^8\sum_{\substack{k \leq 3MN \\ [n,e]|k}} a(k) \\ 
&= \sum_{N < n \leq (1+\theta)N} \sum_{e \ll (MN)^{1/4}} \tau(e)^8 \left(M_{[n,e]}(3MN) + R_{[n,e]}(3MN) \right).
\end{align*}
The contribution from the remainder terms is
\begin{align*}
\ll \sum_{d \ll N(MN)^{1/4}} \left(\sum_{\substack{n_1,n_2 \\ [n_1,n_2]=d}} 1 \right) |R_d(3MN)| \ll (MN)^\epsilon \sum_{d \ll N(MN)^{1/4}}|R_d(3MN)|,
\end{align*}
and since $N x^{1/4} \leq x^{\gamma_0 - \epsilon}$ we may bound the remainder terms with Lemma \ref{Type I remainder}. 

We estimate the main term using the same types of arguments that gave \eqref{n1 bigger than C}. We factor $n = bd$ and $e = rs$ to bound the main term by
\begin{align*}
\ll (MN)^{1/2} \sum_{\substack{\ell \ll (MN)^{1/2} \\ (\ell,\Pi)=1}} \mathbf{1}_\mathcal{A}(\ell) &\sum_{\substack{b \leq (1+\theta)N \\ b \mid \ell^\infty}} \ \sum_{\substack{N/b < d \leq (1+\theta)N/b \\ (d,\ell)=1}}  \\ 
&\times \sum_{\substack{r \ll (MN)^{1/4} \\ r \mid \ell^\infty}} \frac{\tau(r)^8 \rho_\ell([b,r])}{[b,r]} \sum_{\substack{s \ll (MN)^{1/4} \\ (s,\ell)=1}} \frac{\tau(s)^8 \rho_\ell([d,s])}{[d,s]}.
\end{align*}
Since $(ds,\ell) = 1$ we have $\rho_\ell([d,s]) \leq \tau([d,s]) \leq \tau(ds) \leq \tau(d)\tau(s)$. We write
\begin{align*}
\frac{1}{[d,s]} = \frac{1}{ds} (d,s) \leq \frac{1}{ds} \sum_{\substack{f \mid d \\ f \mid s}} f,
\end{align*}
which yields that the main term is
\begin{align*}
&\ll (MN)^{1/2} \sum_{\substack{\ell \ll (MN)^{1/2} \\ (\ell,\Pi)=1}} \mathbf{1}_\mathcal{A}(\ell) \sum_{\substack{b \leq (1+\theta)N \\ b \mid \ell^\infty}} \sum_{\substack{r \ll (MN)^{1/4} \\ r \mid \ell^\infty}} \frac{\tau(r)^8 \rho_\ell([b,r])}{[b,r]} \\ 
&\ \ \ \ \ \ \ \ \ \ \ \ \ \ \times\sum_{\substack{N/b < d \leq (1+\theta)N/b}} \frac{\tau(d)}{d} \sum_{f \mid d} f \sum_{\substack{s \ll (MN)^{1/4} \\ f \mid s}} \frac{\tau(s)^9}{s} \\
&\ll (\log MN)^{2^9} (MN)^{1/2} \sum_{\substack{\ell \ll (MN)^{1/2} \\ (\ell,\Pi)=1}} \mathbf{1}_\mathcal{A}(\ell) \sum_{\substack{b \leq (1+\theta)N \\ b \mid \ell^\infty}} \sum_{\substack{r \ll (MN)^{1/4} \\ r \mid \ell^\infty}} \frac{\tau(r)^8 \rho_\ell([b,r])}{[b,r]} \sum_{\substack{N/b < d \leq (1+\theta)N/b}} \frac{\tau(d)^{11}}{d}.
\end{align*}
If $b \leq N^{1/2}$ we use Lemma \ref{div reduc lemma} with $k = 1$ to deduce
\begin{align*}
\sum_{\substack{N/b < d \leq (1+\theta)N/b}} \frac{\tau(d)^{11}}{d} &\ll \frac{b}{N} \sum_{k \ll (N/b)^{1/2}} \tau(k)^{22} \sum_{\substack{N/b < d \leq (1+\theta)N/b \\ k \mid d}} 1 \ll (\log N)^{2^{22}} \left(\theta + (b/N)^{-1/2} \right) \\
&\ll (\log N)^{2^{22}} \left( \theta + N^{-1/4} \right) \ll (\log N)^{2^{22}} \theta,
\end{align*}
the last inequality following from the lower bound \eqref{intermediate restrictions on U,V}. For $b > N^{1/2}$ we estimate the sum over $d$ trivially and change variables $n = [b,r]$ to get
\begin{align*}
\ll (\log MN)^{2^{12}} (MN)^{1/2} \sum_{\substack{\ell \ll (MN)^{1/2} \\ (\ell,\Pi)=1}} \mathbf{1}_\mathcal{A}(\ell) \sum_{\substack{n > N^{1/2} \\ n \mid \ell^\infty}} \frac{\tau(n)^8 \tau_3(n) \rho_\ell(n)}{n}.
\end{align*}
By Rankin's trick
\begin{align*}
\sum_{\substack{n > N^{1/2} \\ n \mid \ell^\infty}} \frac{\tau(n)^{8} \tau_3(n) \rho_\ell(n)}{n} &\ll N^{-1/4} \prod_{p \mid \ell} \left(1 + \sum_{k=1}^\infty \frac{2(k+1)^{10} p^{k/2}}{p^{3k/4}} \right) \ll N^{-1/4} \prod_{p \mid \ell} \left( 1 + \frac{2^{12}}{p^{1/4}} \right),
\end{align*}
Since $\ell$ has no small prime factors this last quantity is $\ll N^{-1/4}$. We deduce that
\begin{align*}
B_2'(M,N) &\ll (\log MN)^{2^{23}} \theta (MN)^{\frac{1}{2} + \frac{\gamma_0}{2}},
\end{align*}
as desired.
\end{proof}
By Lemma \ref{trivial bound for B '' MN} we see that in order to prove \eqref{bound for B prime MN} it suffices to show that
\begin{align}\label{bound for B0MN}
B_3(M,N) &\ll \theta^9 (\log MN)^{O(1)} (MN)^{\frac{1}{2}+\frac{\gamma_0}{2}}.
\end{align}

We are finally in a position where we can make our variables $m$ and $n$ coprime to one another. Since $n$ is only divisible by primes $p > P$, if $(m,n) \neq 1$ it follows that there exists a prime $p > P$ with $p \mid m$ and $p \mid n$. Therefore the contribution from those $m$ and $n$ that are not coprime is bounded by
\begin{align*}
B_3'(M,N) := \sum_{M < m \leq 2M} \left|\sum_{\substack{N < n \leq (1+\theta)N \\ (n,\Pi)=1 \\ (n,m) \neq 1}} \mu(n) a(mn) \right| &\ll \sum_{P < p \ll (MN)^{1/2}} \sum_{\substack{k \ll MN \\ p^2 \mid k}} a(k) \tau(k).
\end{align*}
We trivially estimate the contribution from $p > (MN)^{1/10}$ using the bound $a(k) \tau(k) \ll_\epsilon (MN)^\epsilon$. Thus
\begin{align*}
B_3'(M,N) &\ll (MN)^{9/10 + \epsilon} + \sum_{P < p \leq (MN)^{1/10}}\sum_{\substack{k \ll MN \\ p^2 \mid k}} a(k) \tau(k) \ll \sum_{P < p \leq (MN)^{1/10}} \sum_{d \ll (MN)^{1/2}} \sum_{\substack{k \ll MN \\ [d,p^2] \mid k}} a(k).
\end{align*}
Considering separately three cases ($d$ and $p$ are coprime, $p$ divides $d$ but $p^2$ does not, $p^2$ divides $d$), we find that
\begin{align*}
B_3'(M,N) &\ll \sum_{P < p \leq (MN)^{1/10}} \sum_{d \ll (MN)^{1/2}} \sum_{\substack{k \ll MN \\ dp^2 \mid k}} a(k).
\end{align*}
We apply Lemma \ref{Type I remainder} to deduce
\begin{align*}
B_3'(M,N) &\ll (MN)^{1/2}\sum_{\substack{\ell \ll (MN)^{1/2} \\ (\ell,\Pi)=1}} \mathbf{1}_\mathcal{A}(\ell) \sum_{P < p \leq (MN)^{1/10}} \frac{1}{p^2} \sum_{d \ll (MN)^{1/2}} \frac{\rho_\ell(dp^2)}{d} \\ 
&\ll (MN)^{1/2}\sum_{\substack{\ell \ll (MN)^{1/2} \\ (\ell,\Pi)=1}} \mathbf{1}_\mathcal{A}(\ell)\sum_{p > P} \frac{1}{p^2} \sum_{k=0}^\infty \sum_{\substack{d \ll (MN)^{1/2}/p^k \\ (d,p)=1}} \frac{\rho_\ell(dp^{k+2})}{dp^k} \\
&\ll (\log MN)^2(MN)^{1/2}\sum_{\substack{\ell \ll (MN)^{1/2} \\ (\ell,\Pi)=1}} \mathbf{1}_\mathcal{A}(\ell)\sum_{p > P} \frac{1}{p^2} \sum_{k=0}^\infty \frac{\rho_\ell(p^{k+2})}{p^k}.
\end{align*}
In going from the second line to the third line we have used Lemma \ref{lem: sums of rho ell} to bound the sum over $d$. 

We consider separately the cases $(p,\ell)=1$ and $p \mid \ell$:
\begin{align*}
\sum_{\substack{p > P \\ (p,\ell)=1}} \frac{1}{p^2} \sum_{k=0}^\infty \frac{\rho_\ell(p^{k+2})}{p^k} \ll \sum_{\substack{p > P}} \frac{1}{p^2} \ll P^{-1}
\end{align*}
and
\begin{align*}
\sum_{\substack{p > P \\ p \mid \ell}} \frac{1}{p^2} \sum_{k=0}^\infty \frac{\rho_\ell(p^{k+2})}{p^k} &\ll \sum_{\substack{p > P \\ p \mid \ell}} \frac{1}{p} \ll (\log \ell) P^{-1},
\end{align*}
where we have used Lemma \ref{lemma about rho ell d} to control the behavior of $\rho_\ell(p^{k+2})$. It follows that
\begin{align}\label{B0MN with non coprime removed}
B_3(M,N) &\ll (\log MN)^{O(1)} (MN)^{\frac{1}{2}+\frac{\gamma_0}{2}}P^{-1} + \sum_{M < m \leq 2M} \left|\sum_{\substack{N < n \leq (1+\theta)N \\ (n,m\Pi)=1}} \mu(n) a(mn) \right|.
\end{align}
In order to prove \eqref{bound for B0MN} it therefore suffices to show that
\begin{align}\label{bound for B1MN}
B_4(M,N) &:= \sum_{M < m \leq 2M} \left|\sum_{\substack{N < n \leq (1+\theta)N \\ (n,m\Pi)=1}} \mu(n) a(mn) \right| \ll \theta^9 (\log MN)^{O(1)} (MN)^{\frac{1}{2}+\frac{\gamma_0}{2}}
\end{align}
for $M$ and $N$ satisfying \eqref{restrictions on N after remove small primes}.

\section{Bilinear form in the sieve: transformations}\label{sec: bilinear form transformations}

Now that $m$ and $n$ are coprime we are able to enter the realm of the Gaussian integers. This is the key step that allows us to estimate successfully the bilinear form $B_4(M,N)$ (see the discussion in \cite[section 5]{friediw2} for more insight on this). Since $m$ and $n$ are coprime the unique factorization in $\mathbb{Z}[i]$ gives
\begin{align*}
a(mn) &= \frac{1}{4} \mathop{\sum \sum}_{\substack{|w|^2 = m, \ |z|^2 = n \\ (\text{Re}(z \overline{w}),\Pi) = 1}} \mathbf{1}_\mathcal{A}(\text{Re}(z \overline{w})).
\end{align*}
Since $(n,\Pi) = 1$ we have $(z \overline{z},\Pi) = 1$, so in particular $z$ is odd. Multiplying $w$ and $z$ by a unit we can rotate $z$ to a number satisfying
\begin{align*}
z \equiv 1 \pmod{2(1+i)}.
\end{align*}
Such a number is called primary, and is determined uniquely by its ideal. In rectangular coordinates $z = r+is$ being primary is equivalent to
\begin{align}\label{z primary}
r \equiv 1 \pmod{2}, \ \ \ \ \ \ \ \ \ s \equiv r-1 \pmod{4},
\end{align}
so that $r$ is odd and $s$ is even. We therefore obtain
\begin{align*}
B_4(M,N) &\leq \mathcal{B}_1(M,N) := \sum_{M < |w|^2 \leq 2M} \left|\sum_{\substack{N < |z|^2 \leq (1+\theta)N \\ (z \overline{z}, w \overline{w} \Pi)=1 \\ (\text{Re}(z \overline{w}),\Pi)=1}} \mu(|z|^2) \mathbf{1}_\mathcal{A}(\text{Re}(z \overline{w})) \right|.
\end{align*}
Here we assume that $z$ runs over primary numbers, so that the factor of $\frac{1}{4}$ does not occur. Further, the presence of the M\"obius function implies we may take $z$ to be primitive, that is, $z = r+is$ with $(r,s) = 1$. Henceforth a summation over Gaussian integers $z$ is always assumed to be over primary, primitive Gaussian integers.

The condition $(m,n) = 1$ was crucial in obtaining a factorization of our bilinear form over $\mathbb{Z}[i]$, but now this condition has become $(w \overline{w},z \overline{z})=1$ which is a nuisance since we wish for $w$ and $z$ to range independently of one another. Because $z \overline{z}$ has no small prime factors, it suffices to estimate trivially the complimentary sum in which $(w \overline{w},z \overline{z})\neq 1$. 

The arguments of this section bear some semblance to those in \cite[Section 5]{friediw2} and \cite[Section 20.4]{opera}. The plan of this section is as follows. We remove the condition $(w \overline{w},z \overline{z})= 1$ in order to make $w$ and $z$ more independent. With this condition gone we apply the Cauchy-Schwarz inequality to arrive at sums of the form
\begin{align*}
\sum_{w} \left|\sum_{\substack{z}} \mu(|z|^2) \mathbf{1}_\mathcal{A}(\text{Re}(z \overline{w})) \right|^2 = \sum_w \mathop{\sum \sum}_{z_1,z_2}\mu(|z_1|^2)\mu(|z_2|^2) \mathbf{1}_\mathcal{A}(\text{Re}(z_1 \overline{w})) \mathbf{1}_\mathcal{A}(\text{Re}(z_2 \overline{w})).
\end{align*}
For technical reasons it is convenient to impose the condition that $z_1$ and $z_2$ are coprime to each other. The key is again the fact that $|z_i|^2$ has no small prime factors. Once this is accomplished, we change variables to arrive at sums of the form
\begin{align*}
\mathop{\sum \sum}_{z_1,z_2} \mu(|z_1|^2) \mu(|z_2|^2) \mathop{\sum \sum}_{\ell_1,\ell_2} \mathbf{1}_\mathcal{A}(\ell_1) \mathbf{1}_\mathcal{A}(\ell_2),
\end{align*}
where $\ell_1,\ell_2$ are rational integers. The variable $w$ has disappeared, but now there are numerous conditions entangling $z_1,z_2$ and the $\ell_i$. Foremost among these conditions is a congruence to modulus $\Delta$, which is the imaginary part of $\overline{z_1} z_2$. The contribution from $\Delta = 0$ is easily dispatched, but the estimation of the terms with $\Delta \neq 0$ is much more involved and is handled in future sections.

Let $\mathcal{B}_1'(M,N)$ denote the contribution to $\mathcal{B}_1(M,N)$ from those $w$ and $z$ with $(w \overline{w},z \overline{z}) \neq 1$. We estimate $\mathcal{B}_1'(M,N)$ trivially and show that it is sufficiently small.

\begin{lemma}\label{lem: triv estimation of tilde cal B}
With the notation as above, we have
\begin{align*}
\mathcal{B}_1'(M,N) \ll (\log MN)^2 (MN)^{\frac{1}{2}+\frac{\gamma_0}{2}} P^{-1}.
\end{align*}
\end{lemma}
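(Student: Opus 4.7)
The key observation is that $\mathcal{B}_1'(M,N)$ admits essentially the same bound as the quantity $B_3'(M,N)$ treated in the previous section, immediately after equation \eqref{B0MN with non coprime removed}. The plan is to reduce to that setting by a triangle inequality followed by a Gaussian integer counting argument, and then invoke the estimation already carried out.

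First, remove the absolute values by the triangle inequality to obtain
\[
\mathcal{B}_1'(M,N) \leq \sum_{w,z} \mathbf{1}_\mathcal{A}(\text{Re}(z \overline{w})),
\]
where $w, z$ range over the pairs appearing in the definition of $\mathcal{B}_1'(M,N)$. Since $(z \overline{z}, \Pi) = 1$ but $(w \overline{w}, z \overline{z}) \neq 1$, there is a rational prime $p > P$ dividing both $|w|^2$ and $|z|^2$. Introduce the auxiliary Gaussian integer $u = z \overline{w} = \ell + i m'$; then $|u|^2 = |w|^2 |z|^2 =: k$, and $p^2 \mid k$. For fixed $u$, the pair $(z,w)$ is determined by a factorization of $u$ in $\mathbb{Z}[i]$ respecting the norm constraints $|z|^2 \asymp N$ and $|w|^2 \asymp M$, so the number of such pairs is at most $\tau_{\mathbb{Z}[i]}(u) \ll \tau(k)$. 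Since $\mathbf{1}_\mathcal{A}(\text{Re}(z\overline{w})) = \mathbf{1}_\mathcal{A}(\ell)$ depends only on $u$, grouping by $u$ and then by $k = |u|^2$ yields
\[
\mathcal{B}_1'(M,N) \ll \sum_{p > P} \sum_{\substack{k \ll MN \\ p^2 \mid k}} \tau(k) a(k).
\]

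This is exactly the upper bound obtained for $B_3'(M,N)$ in the previous section. Repeating that argument verbatim -- splitting off the contribution from $p > (MN)^{1/10}$ by the trivial estimate $a(k) \tau(k) \ll (MN)^\epsilon$, writing $\tau(k) = \sum_{d \mid k} 1$ and applying Lemma \ref{Type I remainder} to replace $A_{dp^2}(MN)$ by the model term $M_{dp^2}(MN)$ up to an acceptable error, then handling the resulting $\rho_\ell$-sums over $d$ via Lemma \ref{lem: sums of rho ell} and the tail sum over $p$ using Lemma \ref{lemma about rho ell d} together with the fact that $P^-(\ell) > P$ -- delivers the claimed bound. No new obstacle arises; the only mild subtlety, exactly as for $B_3'$, is separating the contribution of primes $p \mid \ell$ from those with $(p,\ell) = 1$ when estimating $\sum_{k \geq 0} \rho_\ell(p^{k+2})/p^k$, but both pieces absorb cleanly into the $P^{-1}$ saving once the hypothesis $P^-(\ell) > P$ is used to bound $\sum_{p \mid \ell} 1/p \ll (\log \ell)/P$.
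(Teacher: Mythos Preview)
Your approach is different from the paper's and, modulo one technical slip, is correct. The paper proves the lemma directly: writing $z=r+is$, $w=u+iv$, it fixes $\ell,r,s,p$ and shows by an elementary congruence argument that $u$ lies in a fixed residue class modulo $ps/(\ell,p)$, then sums over $r,s$ and $p$. Your route---collapsing $(w,z)$ to $u=z\overline w$, bounding the multiplicity by a Gaussian divisor count, and reducing to the already-established estimate for $B_3'(M,N)$---is more economical and conceptually explains why the two ``non-coprime'' error terms admit the same bound.

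The slip is the inequality $\tau_{\mathbb{Z}[i]}(u)\ll\tau(k)$ with $k=|u|^2$. This fails: for a split prime $p=\pi\overline\pi$ and $u=p^n$, one has $\tau_{\mathbb{Z}[i]}(u)=(n+1)^2$ while $\tau(k)=\tau(p^{2n})=2n+1$. The correct uniform bound is $\tau_{\mathbb{Z}[i]}(u)\le\tau(k)^2$ (check multiplicatively: at a split prime $(a+1)(b+1)\le(a+b+1)^2$; at inert and ramified primes one even has $\le$). With $\tau(k)^2$ in place of $\tau(k)$ the reduction to the $B_3'$-type sum still works; one simply invokes Lemma~\ref{div reduc lemma} with $k=1$ to write $\tau(k)^2\ll\tau(d)^4$ for some $d\le k^{1/2}$, and the rest of the $B_3'$ argument goes through with larger (but still $O(1)$) log-powers. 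The stated exponent $2$ on $\log MN$ may then become $O(1)$, but this is harmless for every subsequent use in the paper.
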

\begin{proof}
We find
\begin{align*}
\mathcal{B}_1'(M,N) &\ll \sum_{\ell \ll (MN)^{1/2}} \mathbf{1}_\mathcal{A}(\ell) \sum_{p > P} \ \mathop{\sum \sum}_{\substack{N < r^2 + s^2 \leq 2N \\ r^2+s^2 \equiv 0 (p) \\ (r,s)=1}} \ \mathop{\sum \sum}_{\substack{M < u^2 + v^2 \leq 2M \\ u^2 + v^2 \equiv 0 (p) \\ ru + sv = \ell}} 1.
\end{align*}
Observe that $p \nmid rs$ since $r^2+s^2 \equiv 0 \pmod{p}$ and $(r,s)=1$.

Given fixed $\ell,r$, and $s$, we claim that the residue class of $u$ is fixed modulo $ps/(\ell,p)$. Indeed, we see that $u$ is in a fixed residue class modulo $s$, since $ru+sv = \ell$ implies $u \equiv \overline{r} \ell \pmod{s}$. If $p \mid \ell$ this gives the claim, so assume $p \nmid \ell$. Then $v \equiv \overline{s}(\ell - ru) \pmod{p}$, which yields
\begin{align*}
0 \equiv u^2+v^2 \equiv u^2 + (\overline{s})^2 (\ell - ru)^2 \pmod{p}.
\end{align*}
We multiply both sides of the congruence by $s^2$, expand out $(\ell - ru)^2$, and use the fact that $r^2 + s^2 \equiv 0 (p)$. This gives
\begin{align*}
2\ell ru \equiv \ell^2 \pmod{p}.
\end{align*}
Since $\ell$ is coprime to $p$ we can divide both sides by $\ell$, and we can divide by $2r$ since $p \nmid 2r$. Thus the class of $u$ is fixed modulo $p$. Since the class of $u$ is fixed modulo $p$ and modulo $s$, and since $(p,s)=1$, the Chinese remainder theorem gives that the class of $u$ is fixed modulo $ps$. This completes the proof of the claim.

If $\ell,r,s$, and $u$ are given, then $v$ is determined. The sum over $u,v$ is then bounded by
\begin{align*}
\ll \frac{M^{1/2} (\ell,p)}{ps} + 1.
\end{align*}
By the symmetry of $u$ and $v$ we also have that the sum over $u,v$ is bounded by
\begin{align*}
\ll \frac{M^{1/2} (\ell,p)}{pr} + 1.
\end{align*}
Since $r^2 + s^2 > N$, either $r \gg N^{1/2}$ or $s \gg N^{1/2}$, so we may bound the sum over $u,v$ by
\begin{align*}
\ll \frac{M^{1/2}}{N^{1/2}} \frac{(\ell,p)}{p} + 1.
\end{align*}
We also note that
\begin{align*}
\mathop{\sum \sum}_{\substack{N < r^2 + s^2 \leq 2N \\ r^2+s^2 \equiv 0 (p) \\ (r,s)=1}} 1 \ll \sum_{\substack{n \ll N \\ p \mid n}} \tau(n) \ll \frac{N \log N}{p}.
\end{align*}
Therefore
\begin{align}\label{tilde BMN similar argum later}
\mathcal{B}_1'(M,N) &\ll (\log N)(MN)^{1/2} \sum_{\ell \ll (MN)^{1/2}} \mathbf{1}_\mathcal{A}(\ell) \sum_{P < p \ll N} \frac{(\ell,p)}{p^2}  + (\log N)^2 (MN)^{\gamma_0/2}N.
\end{align}
The second term is sufficiently small if $N \leq x^{1/2 - \epsilon}$, which is satisfied if
\begin{align}\label{third bound for U}
U \geq x^{1/2 + \epsilon}.
\end{align}
This lower bound supersedes \eqref{second bound for U}, and implies $M > N$ since $MN \gg \theta x$. 

To bound the first term we note that
\begin{align*}
\sum_{P < p \ll N} \frac{(\ell,p)}{p^2} \leq \sum_{\substack{p > P \\ p \nmid \ell}} \frac{1}{p^2} + \sum_{\substack{p > P \\ p \mid \ell}} \frac{1}{p} \ll (\log \ell) P^{-1},
\end{align*} 
and this gives the bound
\begin{align*}
\mathcal{B}_1'(M,N) &\ll (\log MN)^2 (MN)^{\frac{1}{2}+\frac{\gamma_0}{2}} P^{-1}.
\end{align*}
\end{proof}

Lemma \ref{lem: triv estimation of tilde cal B} proves that \eqref{bound for B1MN} follows from the bound
\begin{align*}
\mathcal{B}_2(M,N) &:= \sum_{M < |w|^2 \leq 2M} \left|\sum_{\substack{N < |z|^2 \leq (1+\theta)N \\ (z \overline{z},\Pi)=1 \\ (\text{Re}(z \overline{w}),\Pi)=1}} \mu(|z|^2) \mathbf{1}_\mathcal{A}(\text{Re}(z \overline{w})) \right| \ll \theta^9 (\log MN)^{O(1)}  (MN)^{1/2 + \gamma_0/2}.
\end{align*}

We now apply the Cauchy-Schwarz inequality, obtaining
\begin{align*}
\mathcal{B}_2(M,N)^2 &\ll M \mathcal{D}_1(M,N),
\end{align*}
where
\begin{align*}
\mathcal{D}_1(M,N) &:=  \sum_{|w|^2 \leq 2M} \left|\sum_{\substack{N < |z|^2 \leq (1+\theta)N \\ (z \overline{z},\Pi)=1 \\ (\text{Re}(z \overline{w}),\Pi)=1}} \mu(|z|^2) \mathbf{1}_\mathcal{A}(\text{Re}(z \overline{w})) \right|^2.
\end{align*}
Note that we have used positivity to extend the sum over $w$. It therefore suffices to show that
\begin{align}\label{bound for cal DMN}
\mathcal{D}_1(M,N) &\ll \theta^{18} (\log MN)^{O(1)} (MN)^{\gamma_0} N.
\end{align}

Expanding the square in $\mathcal{D}_2(M,N)$ gives a sum over $w,z_1$, and $z_2$, say. As mentioned above, we wish to impose the condition that $z_1$ and $z_2$ are coprime. To do so we first require a trivial bound. Observe that
\begin{align*}
\mathcal{D}_1(M,N) &\leq D_1'(M,N) :=\sum_{|w|^2 \leq 2M} \mathop{\sum \sum}_{N < |z_1|^2,|z_2|^2 \leq 2N} \mathbf{1}_\mathcal{A}(\text{Re}(z_1 \overline{w})) \mathbf{1}_\mathcal{A}(\text{Re}(z_2 \overline{w})).
\end{align*}

\begin{lemma}\label{trivial bound on DMN}
For $M \geq N \geq 2$ we have
\begin{align*}
D_1'(M,N) &\ll \left( (MN)^{\frac{1}{2}+\frac{\gamma_0}{2}} + (MN)^{\gamma_0} N  \right) (\log MN)^{36}.
\end{align*}
\end{lemma}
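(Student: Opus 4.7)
Set $z_j = r_j + is_j$, $w = u+iv$, $\ell_j := \mathrm{Re}(z_j\overline{w}) = r_j u + s_j v$, and $\Delta := r_1 s_2 - r_2 s_1 = \mathrm{Im}(\overline{z_1}z_2)$. The plan is to split the triple sum defining $D_1'(M,N)$ according to whether $\Delta$ vanishes, since this controls whether the $\mathbb{Z}$-linear map $w \mapsto (\ell_1, \ell_2)$ is singular or injective. The diagonal contribution $\Delta = 0$ will produce the first term $(MN)^{1/2+\gamma_0/2}$ of the bound, and the off-diagonal contribution $\Delta \neq 0$ the second term $(MN)^{\gamma_0} N$.

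For the diagonal, because $z_1, z_2$ are both primary and primitive, $\Delta = 0$ (i.e. $\overline{z_1}z_2 \in \mathbb{R}$) forces $z_1 = z_2 =: z$: the candidates $-z_2$ and $\pm i z_2$ all violate the primary condition $z \equiv 1 \pmod{2(1+i)}$. I would then change variables $w \mapsto (\ell, m) := (ru+sv, -su+rv)$, which is injective into a sublattice of $\mathbb{Z}^2$ of index $|z|^2$ and satisfies $\ell^2 + m^2 = |z|^2|w|^2 \le 4MN$. For each $\ell \in \mathcal{A}$ the number of compatible $w$ with $|w|^2 \le 2M$ is $\ll \sqrt{M/N} + 1 \ll \sqrt{M/N}$ (using $M \ge N$). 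Summing over the $\ll (MN)^{\gamma_0/2}$ admissible $\ell$ and the $\ll N$ choices of $z$ then yields the diagonal bound $\ll (MN)^{1/2+\gamma_0/2}$.

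For the off-diagonal, let $B$ denote the integer matrix with rows $(r_1, s_1)$ and $(r_2, s_2)$; then $w \mapsto (\ell_1, \ell_2) = B(u,v)$ is a $\mathbb{Z}$-linear injection whose image $B(\mathbb{Z}^2)$ is a sublattice of $\mathbb{Z}^2$ of index $|\Delta|$. Writing $f(z_1, z_2)$ for the inner sum, this gives
\[
f(z_1, z_2) \le \#\bigl\{(\ell_1, \ell_2) \in \mathcal{A}^2 \cap B(\mathbb{Z}^2) : |\ell_j| \ll \sqrt{MN}\bigr\}.
\]
The sublattice condition $(\ell_1, \ell_2) \in B(\mathbb{Z}^2)$ is a pair of linear congruences modulo $|\Delta|$, which I would detect by additive characters. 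The trivial character yields the expected main term of size $\asymp (MN)^{\gamma_0}/|\Delta|$, while the contribution from the nontrivial characters is controlled by invoking the large-sieve estimate for $F_Y$ in Lemma \ref{maynard type I}, after a short-interval decomposition of the $\mathbf{1}_\mathcal{A}$-sums into blocks of length a power of 10 (mirroring the template of Lemmas \ref{prototypical estimation lemma one} and \ref{prototypical estimation lemma two}). This yields $f(z_1, z_2) \ll (MN)^{\gamma_0} |\Delta|^{-1} (\log MN)^{O(1)}$. To sum over pairs, I parameterize by $\gamma = \overline{z_1} z_2 \in \mathbb{Z}[i]$ with $|\gamma|^2 \le 4N^2$; the Gaussian divisor bound $\tau_{\mathbb{Z}[i]}(\gamma) \ll (MN)^{o(1)}$ together with
\[
\sum_{\substack{\gamma \in \mathbb{Z}[i],\, |\gamma|^2 \le 4N^2 \\ \mathrm{Im}(\gamma) \ne 0}} \frac{1}{|\mathrm{Im}(\gamma)|} \ll N \log N
\]
(valid because for each fixed $\Delta = \mathrm{Im}(\gamma)$ there are $\ll N$ compatible values of $\mathrm{Re}(\gamma)$) produces the off-diagonal contribution $\ll (MN)^{\gamma_0} N (\log MN)^{O(1)}$.

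The principal obstacle will be the Fourier/large-sieve step establishing $f(z_1, z_2) \ll (MN)^{\gamma_0}/|\Delta|$: uniform control over the nontrivial character contributions requires care, in particular when $|\Delta|$ shares factors with 10, an issue analogous to that treated in Lemma \ref{prototypical estimation lemma two}. By contrast, the lattice-point counting in the diagonal case and the Gaussian-divisor averaging $\sum 1/|\Delta|$ are essentially elementary.
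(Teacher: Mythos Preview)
Your diagonal treatment is correct and in fact slightly cleaner than the paper's: you observe that for primary, primitive $z_1,z_2$ the condition $\Delta=0$ forces $z_1=z_2$ exactly, whereas the paper uses the cruder partition $|z_1|=|z_2|$ versus $|z_1|\neq|z_2|$ and picks up an extra $\tau(r^2+s^2)$.

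The off-diagonal argument, however, has a genuine gap. The pointwise bound
\[
f(z_1,z_2)\;\ll\;\frac{(MN)^{\gamma_0}}{|\Delta|}\,(\log MN)^{O(1)}
\]
does \emph{not} follow from additive characters together with Lemma~\ref{maynard single denominator} or Lemma~\ref{maynard type I}. After detecting the congruence $\ell_2\equiv c\ell_1\ (|\Delta|)$ by characters, the nonzero frequencies contribute
\[
\frac{(MN)^{\gamma_0}}{|\Delta|}\sum_{k=1}^{|\Delta|-1}F_X\!\left(\frac{k}{|\Delta|}\right)\;\ll\;(MN)^{\gamma_0}\Bigl(|\Delta|^{-50/77}+X^{-50/77}\Bigr),
\]
which for $|\Delta|$ of size a positive power of $N$ is far larger than $(MN)^{\gamma_0}/|\Delta|$. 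The template of Lemma~\ref{prototypical estimation lemma two} does not rescue this: Lemma~\ref{maynard small moduli} is only effective when $\log q=o(\sqrt{\log X})$, whereas here $|\Delta|$ can be as large as $N$. In fact the pointwise bound is simply false: if $|\Delta|=10^j$ and $c\equiv 1$, every $\ell_1\in\mathcal A$ has good last $j$ digits, so $f(z_1,z_2)=9^{2k-j}$ while $(MN)^{\gamma_0}/|\Delta|\asymp 9^{2k}/10^j$, a discrepancy of $(10/9)^j$. Summing your actual Fourier bound over pairs via $\gamma=\overline{z_1}z_2$ then produces $(MN)^{\gamma_0}N^{104/77}$, which exceeds the target by $N^{27/77}$.

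The paper avoids this by \emph{reversing the order of summation}: it places $\ell_1,\ell_2$ outside and, for fixed $\ell_1,\ell_2,s_1,s_2$, counts $(r_1,r_2,\Delta)$. The divisibility $\Delta\mid(\ell_1 s_2-\ell_2 s_1)$ turns the $\Delta$-sum into a divisor function on the \emph{fixed} integer $\ell_1 s_2-\ell_2 s_1$, which is tamed by Lemma~\ref{div reduc lemma}; only then is a congruence on $\ell_2$ detected, and crucially its modulus is $\ll(\sqrt{M}N)^{1/4}$, small enough for Lemma~\ref{maynard type I} to win. This averaging over moduli is the missing ingredient in your sketch. (A secondary point: even granting your $f$-bound, the Gaussian divisor estimate $\tau_{\mathbb Z[i]}(\gamma)\ll(MN)^{o(1)}$ would cost more than the $(\log MN)^{36}$ allowed by the lemma.)
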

\begin{proof}
We consider separately the diagonal $|z_1| = |z_2|$ and the off-diagonal $|z_1| \neq |z_2|$ cases.

We can bound the diagonal terms by
\begin{align*}
D_{=}'(M,N) &:= \sum_{\ell \ll (MN)^{1/2}} \mathbf{1}_\mathcal{A}(\ell) \mathop{\sum \sum}_{\substack{N < r^2 + s^2 \leq 2N \\ (r,s)=1}} \tau(r^2+s^2) \mathop{\sum \sum}_{\substack{u^2 + v^2 \leq 2M \\ ru + sv = \ell}} 1.
\end{align*}
By an argument similar to that which yielded \eqref{tilde BMN similar argum later}, we bound the sum over $u,v$ by
\begin{align*}
\ll \min \left(\frac{M^{1/2}}{r} + 1, \frac{M^{1/2}}{s} + 1 \right) \ll \frac{M^{1/2}}{N^{1/2}} + 1 \ll \frac{M^{1/2}}{N^{1/2}}.
\end{align*}
The sum over $r$ and $s$ is bounded by
\begin{align*}
\mathop{\sum \sum}_{\substack{N < r^2 + s^2 \leq 2N \\ (r,s)=1}} \tau(r^2+s^2) &\ll \sum_{n \leq 2N} \tau(n)^2 \ll N (\log N)^3,
\end{align*}
and we deduce that
\begin{align*}
D_{=}'(M,N) \ll (\log N)^3 (MN)^{\frac{1}{2}+\frac{\gamma_0}{2}}.
\end{align*}

We turn now to bounding the off-diagonal terms with $|z_1| \neq |z_2|$. Observe that
\begin{align*}
\Delta = \Delta(z_1,z_2) = \frac{1}{2i} \left(\overline{z_1}z_2 - z_1 \overline{z_2} \right) \neq 0,
\end{align*}
since $(z_1,\overline{z_1})=(z_2,\overline{z_2})=1$. The off-diagonal terms therefore contribute
\begin{align*}
D_{\neq}'(M,N) &\ll \mathop{\sum \sum}_{\substack{\ell_1,\ell_2 \ll (MN)^{1/2}}} \mathbf{1}_\mathcal{A}(\ell_1) \mathbf{1}_\mathcal{A}(\ell_2) \mathop{\sum\sum}_{\substack{N < |z_1|^2,|z_2|^2 \leq 2N \\ |z_1| \neq |z_2| \\ \Delta \mid (\ell_1 z_2 - \ell_2z_1)}} 1.
\end{align*}
We note that the division takes place in the Gaussian integers, and that $\ell_1 z_2 - \ell_2 z_1 \neq 0$ (see \eqref{relation btwn w delta ells and zs} below). Using rectangular coordinates $z_1 = r_1 + is_1, z_2 = r_2 + is_2$, we see that $\Delta = r_1 s_2 - r_2 s_1$ and
\begin{align*}
\ell_1 r_2 &\equiv \ell_2 r_1 \pmod{\Delta}, \\
\ell_1 s_2 &\equiv \ell_2 s_1 \pmod{\Delta},
\end{align*}
where now the congruences are congruences of rational integers. By symmetry we may assume that $\ell_1s_2 - \ell_2s_1 \neq 0$. Given $\ell_1,\ell_2,s_1,s_2$, and $\Delta \neq 0$, we see that the residue class of $r_1$ modulo $s_1/(s_1,s_2)$ is fixed, and then $r_2$ is determined by the relation $\Delta = r_1s_2 - r_2s_1$. The number of pairs $r_1,r_2$ is then bounded by
\begin{align*}
\ll \sqrt{N} \frac{(s_1,s_2)}{s_1}.
\end{align*}
Letting $\delta = (s_1,s_2)$ and $s_1 = \delta s_1^*, s_2 = \delta s_2^*$ (so that $(s_1^*,s_2^*)=1$), we see that
\begin{align*}
D_{\neq}'(M,N) &\ll \sqrt{N} \sum_{\delta \ll N^{1/2}} \tau(\delta) \mathop{\sum \sum}_{\substack{s_1^*,s_2^* \ll N^{1/2}/\delta \\ (s_1^*,s_2^*)=1}} \frac{1}{s_1^*} \mathop{\sum \sum}_{\substack{\ell_1,\ell_2 \ll (MN)^{1/2} \\ \ell_1s_2^* - \ell_2s_1^* \neq 0}} \mathbf{1}_\mathcal{A}(\ell_1)\mathbf{1}_\mathcal{A}(\ell_2) \tau(\ell_1s_2^* - \ell_2s_1^*).
\end{align*}
Observe that $|\ell_1 s_2^* - \ell_2s_1^*| \ll \sqrt{M} N$. We apply Lemma \ref{div reduc lemma} with $k = 2$ to get
\begin{align*}
D_{\neq}'(M,N) &\ll \sqrt{N} \sum_{\delta \ll N^{1/2}} \tau(\delta) \mathop{\sum \sum}_{\substack{s_1^*,s_2^* \ll N^{1/2}/\delta \\ (s_1^*,s_2^*)=1}} \frac{1}{s_1^*} \sum_{f \ll F} \tau(f)^4 \mathop{\sum \sum}_{\substack{\ell_1,\ell_2 \ll (MN)^{1/2} \\ \ell_1s_2^* \equiv \ell_2s_1^* (f)}} \mathbf{1}_\mathcal{A}(\ell_1)\mathbf{1}_\mathcal{A}(\ell_2),
\end{align*}
where $F = (\sqrt{M} N)^{1/4}$. Taking the supremum over $s_2^*$ and $\delta$ gives
\begin{align*}
D_{\neq}'(M,N) &\ll (\log N)^2 N \sum_{\substack{s_1^* \leq N' \\ (s_1^*,s_2^*)=1}} \frac{1}{s_1^*} \sum_{f \ll F} \tau(f)^4 \mathop{\sum \sum}_{\substack{\ell_1,\ell_2 \ll (MN)^{1/2} \\ \ell_1s_2^* \equiv \ell_2s_1^* (f)}} \mathbf{1}_\mathcal{A}(\ell_1)\mathbf{1}_\mathcal{A}(\ell_2)
\end{align*}
for some $N',s_2^* \ll N^{1/2}$. We now write $f = gh, s_1^* = hs$ with $(g,s) = 1$. Observe that $(h,s_2^*) = 1$. Then the congruence $\ell_1s_2^* \equiv \ell_2s_1^* (f)$ yields the congruences
\begin{align*}
\ell_1 &\equiv 0 \pmod{h}, \\
\ell_2 &\equiv \overline{s} s_2^* (\ell_1/h) \pmod{g},
\end{align*}
where $\overline{s}$ is the inverse of $s$ modulo $g$. We deduce that
\begin{align*}
D_{\neq}'(M,N) &\ll (\log N)^2 N \mathop{\sum \sum}_{gh \leq F} \frac{\tau(g)^4 \tau(h)^4}{h} \sum_{s \leq N'/h} \frac{1}{s} \sum_{\substack{\ell_1 < X \\ h \mid \ell_1}} \mathbf{1}_\mathcal{A}(\ell_1) \sum_{\substack{\ell_2 < X \\ \ell_2 \equiv \nu (g)}} \mathbf{1}_\mathcal{A}(\ell_2),
\end{align*}
where $X$ is a power of 10 with $X \asymp (MN)^{1/2}$ and $\nu = \nu(h,\ell_1,s,s_2^*)$ is a residue class. We detect the congruence on $\ell_2$ with additive characters, and then apply the triangle inequality to eliminate $\nu$ (we have already seen this technique in the proof of Lemma \ref{prototypical estimation lemma one}). We then drop the divisibility condition on $\ell_1$, obtaining
\begin{align*}
D_{\neq}'(M,N) &\ll (\log MN)^{19} (MN)^{\gamma_0} N \sum_{g \leq F} \frac{\tau(g)^4}{g} \sum_{r=1}^g F_X \left(\frac{r}{g} \right).
\end{align*}
Reducing to primitive fractions gives
\begin{align*}
\sum_{g \leq F} \frac{\tau(g)^4}{g} \sum_{r=1}^g F_X \left(\frac{r}{g} \right)&\ll (\log MN)^{16} \sum_{1 \leq q \leq F} \frac{\tau(q)^4}{q} \sum_{\substack{1 \leq a \leq q \\ (a,q)=1}} F_X \left(\frac{a}{q}\right).
\end{align*}
By the divisor bound, a dyadic division, and Lemma \ref{maynard type I}, we find this last quantity is
\begin{align*}
\ll (\log MN)^{17} \sup_{Q \leq F} \left(\frac{1}{Q^{23/77 - \epsilon}} + \frac{Q^{1+\epsilon}}{X^{50/77}} \right) \ll (\log MN)^{17} \left(1 + \frac{F^{1+\epsilon}}{X^{50/77}} \right).
\end{align*}
Observe that
\begin{align*}
F &= (\sqrt{M}N)^{1/4} \leq (\sqrt{MN})^{1/2} \ll (\sqrt{MN})^{50/77-\epsilon} \asymp X^{50/77-\epsilon},
\end{align*}
which yields
\begin{align*}
D_{\neq}'(M,N) &\ll (\log MN)^{36} (MN)^{\gamma_0} N.
\end{align*}
\end{proof}

With Lemma \ref{trivial bound on DMN} in hand, we can show that the contribution from $(z_1,z_2) \neq 1$ in $\mathcal{D}_1(M,N)$ is negligible. This is due to the fact that $(|z_i|^2,\Pi) = 1$. Denoting by $\pi$ a Gaussian prime, the contribution from $(z_1,z_2) \neq 1$ is bounded by
\begin{align*}
\sum_{P < |\pi|^2 \ll N} \sum_{|w|^2 \ll M} \mathop{\sum \sum }_{\frac{N}{|\pi|^2} < |z_1|^2,|z_2|^2 \leq \frac{2N}{|\pi|^2}} \mathbf{1}_\mathcal{A}(\text{Re}(z_1 \pi \overline{w})) \mathbf{1}_\mathcal{A}(\text{Re}(z_2 \pi \overline{w})).
\end{align*}
We break the range of $|\pi|^2$ into dyadic intervals $P_1 < |\pi|^2 \leq 2P_1$, and put $w' = w \overline{\pi}$. We observe that
\begin{align*}
\sum_{\pi \mid \mathfrak{z}} 1 \ll \log |\mathfrak{z}|
\end{align*}
for any Gaussian integer $\mathfrak{z}$, so the contribution from the pairs $z_1,z_2$ that are not coprime is bounded by
\begin{align*}
\ll (\log MN)^2 D_1'(MP_1,NP_1^{-1}),
\end{align*}
for some $P < P_1 \ll N$. By Lemma \ref{trivial bound on DMN} this bound becomes
\begin{align*}
\ll (\log MN)^{38} \left((MN)^{1/2 + \gamma_0/2} + (MN)^{\gamma_0} N P^{-1} \right).
\end{align*}
The second term is satisfactorily small, and the first is sufficiently small provided
\begin{align}\label{lower bound for V}
V \geq x^{1/2 - \gamma_0/2 + \epsilon}.
\end{align}
This lower bound for $V$ supersedes the one in \eqref{intermediate restrictions on U,V}. In order to show \eqref{bound for cal DMN} it then suffices to show that
\begin{align}\label{bound for cal D star MN}
\mathcal{D}_2(M,N) &:= \sum_{|w|^2 \leq 2M} \mathop{\sum \sum}_{\substack{N < |z_1|^2,|z_2|^2 \leq (1+\theta)N \\ (|z_1|^2|z_2|^2,\Pi)=1 \\ (z_1,z_2)=1 \\ (\text{Re}(z_1 \overline{w}) \text{Re}(z_2 \overline{w}),\Pi) = 1}} \mu(|z_1|^2) \mu(|z_2|^2) \mathbf{1}_\mathcal{A}(\text{Re}(z_1 \overline{w})) \mathbf{1}_\mathcal{A}(\text{Re}(z_2 \overline{w})) \\ 
&\ll \theta^{18}(\log MN)^{O(1)}  (MN)^{\gamma_0} N \nonumber
\end{align}
for $M$ and $N$ satisfying \eqref{restrictions on N after remove small primes}. Since $Vx^{-o(1)} \ll N \ll x/U$ we see that \eqref{lower bound for V} yields the lower bound on $N$ in \eqref{restrictions on N after remove small primes}.

Now that $z_1$ and $z_2$ are coprime we change variables in order to rid ourselves of the variable $w$. We put $\ell_1 = \text{Re}(z_1 \overline{w})$ and $\ell_2 = \text{Re}(z_2 \overline{w})$, that is,
\begin{align*}
&z_1 \overline{w} + \overline{z_1}w= 2\ell_1, \\
&z_2 \overline{w} + \overline{z_2}w= 2\ell_2.
\end{align*}
We set $\Delta = \Delta(z_1,z_2) = \text{Im}(\overline{z_1}z_2) = \frac{1}{2i}(\overline{z_1}z_2 - z_1 \overline{z_2})$, and note that
\begin{align}\label{relation btwn w delta ells and zs}
i w \Delta = \ell_1 z_2 - \ell_2 z_1.
\end{align}
It follows that
\begin{align*}
\mathcal{D}_2 (M,N) &= \mathop{\sum \sum}_{\substack{N < |z_1|^2,|z_2|^2 \leq (1+\theta)N \\ (|z_1|^2 |z_2|^2,\Pi)=1 \\ (z_1,z_2)=1}} \mu(|z_1|^2) \mu(|z_2|^2) \mathop{\sum \sum}_{\substack{\ell_1,\ell_2 \leq \sqrt{2(1+\theta)} (MN)^{1/2} \\ \ell_1 z_2 \equiv \ell_2 z_1 (|\Delta|) \\ |\ell_1 z_2 - \ell_2 z_1|^2 \leq 2 \Delta^2 M \\ (\ell_1\ell_2,\Pi)=1}}  \mathbf{1}_\mathcal{A}(\ell_1)\mathbf{1}_\mathcal{A}(\ell_2).
\end{align*}
Observe that the congruence is a congruence of Gaussian integers.

The contribution from $\Delta = 0$ is bounded by
\begin{align*}
\mathcal{D}_2' &:= \mathop{\sum \sum}_{\substack{|z_1|^2, |z_2|^2\ll N \\ \text{Im}(\overline{z_1}z_2)=0}} \sum_{\ell \ll (MN)^{1/2}} \mathbf{1}_\mathcal{A}(\ell),
\end{align*}
since if $\Delta = 0$ the triple $(z_1,z_2,\ell_1)$ determines $\ell_2$. The summation over $\ell$ is bounded by $O((MN)^{\gamma_0/2})$. Writing $z_1 = r+is$ and $z_2 = u+iv$, we may bound the sum over $z_1,z_2$ by
\begin{align*}
\mathop{\sum \sum}_{r,v \ll N^{1/2}} \mathop{\sum \sum}_{\substack{s,u \ll N^{1/2} \\ su = rv}} 1 \leq \mathop{\sum \sum}_{r,v \ll N^{1/2}} \tau(rv) \ll N \log^2 N.
\end{align*}
Thus
\begin{align*}
\mathcal{D}_2' &\ll (\log N)^{2} (MN)^{\gamma_0/2} N,
\end{align*}
and this is acceptable for \eqref{bound for cal D star MN} since $MN \gg \theta x$. It therefore suffices to show that
\begin{align}\label{bound for cal D 1 MN}
\mathcal{D}_3(M,N) \ll \theta^{18}(\log MN)^{O(1)}  (MN)^{\gamma_0} N,
\end{align}
where $\mathcal{D}_3$ is $\mathcal{D}_2$ with the additional condition that $\Delta \neq 0$.

\section{Congruence exercises}\label{sec: congruence exercises}

Our next major task, which requires much preparatory work, is to simplify $\mathcal{D}_3$ by removing the congruence condition entangling $z_1,z_2,\ell_1$, and $\ell_2$. To handle the condition $\ell_1 z_2 \equiv \ell_2 z_1 \pmod{|\Delta|}$, we sum over all residue classes $b$ modulo $|\Delta|$ such that $bz_1 \equiv z_2 \pmod{|\Delta|}$. Then, with $\ell_1$ fixed, we sum over $\ell_2 \equiv b \ell_1 \pmod{|\Delta|}$. 

A key point is that since $z_1$ and $z_2$ are coprime, $b$ is uniquely determined modulo $|\Delta|$. That is,
\begin{align*}
\sum_{\substack{b (|\Delta|) \\ bz_1 \equiv z_2 (|\Delta|)}} 1 = 1.
\end{align*}
To see this, note that we only require $(z_1,|\Delta|) = 1$. Now, if $\pi$ is a Gaussian prime dividing $z_1$ and $|\Delta|$, then the congruence condition on $b$ implies $\pi \mid z_2$, which contradicts the fact that $z_1$ and $z_2$ are coprime Gaussian integers.

One problem we face is that the congruence $\ell_2 \equiv b \ell_1 \pmod{|\Delta|}$ is not a congruence of rational integers. If we write $b = r+is$, then we see that the Gaussian congruence $\ell_2 \equiv b \ell_1 \pmod{|\Delta|}$ is equivalent to the rational congruences
\begin{align*}
\ell_2 &\equiv r \pmod{|\Delta|}, \\
s \ell_1 &\equiv 0 \pmod{|\Delta|}.
\end{align*}
If we can take $\ell_1$ to be coprime to $|\Delta|$, then this implies $s \equiv 0 \pmod{\Delta}$. As $s$ is only defined modulo $\Delta$ we may then take $s$ to be zero, which implies $b$ is rational.

Lastly, with a view towards using the fundamental lemma to control the condition $(\ell_2,\Pi) = 1$, we anticipate sums of the form
\begin{align*}
\sum_{\substack{\ell_2 \equiv b\ell_1 \pmod{|\Delta|} \\ \ell_2 \equiv 0 \pmod{d}}} \mathbf{1}_\mathcal{A}(\ell).
\end{align*}
If we can ensure that $b$ is coprime to $|\Delta|$, then the first congruence implies $\ell_2$ is coprime to $|\Delta|$ (recall we are assuming for the moment that $(\ell_1,|\Delta|) = 1$). Taking the first and second congruences together we see that $(d,|\Delta|) = 1$, so that the set of congruences may be combined by the Chinese remainder theorem into a single congruence modulo $d|\Delta|$. We can take $b$ to be coprime to $|\Delta|$ by imposing the condition $(z_1z_2,|\Delta|) = 1$. Actually, we saw above that $z_1$ is already coprime to $|\Delta|$, so we only need to make $z_2$ coprime to $|\Delta|$.

One technical obstacle to overcome is that the set $\mathcal{A}$ is not well-distributed in residue classes to moduli that are not coprime to 10. Since we have essentially no control over the 2- or 5-adic valuation of $|\Delta|$, we need to work around the ``10-adic'' part of $|\Delta|$ somehow.

We begin by removing those $|\Delta|$ that are unusually small (see \cite[(17)]{hbli} and the following discussion for a similar computation). Since $\Delta = \text{Im}(\overline{z_1}z_2)$ and $|z_i| \asymp N^{1/2}$, we expect that typically $|\Delta| \approx N$, and perhaps that those $|\Delta|$ that are much smaller than $N$ should have a negligible contribution. 
\begin{lemma}\label{lem: remove small Delta}
The contribution to $\mathcal{D}_3$ from $|\Delta|\leq \theta^{18}N$ is
\begin{align*}
\ll \theta^{18}(\log N)^{2}  (MN)^{\gamma_0} N.
\end{align*}
\end{lemma}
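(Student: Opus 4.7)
The plan is to bound the contribution in absolute value, use the bijection between admissible pairs $(\ell_1, \ell_2)$ and Gaussian integers $w$ with $|w|^2 \leq 2M$ implicit in \eqref{relation btwn w delta ells and zs}, interchange summation, and then combine a count of pairs $(z_1, z_2)$ with small $|\Delta|$ against a per-pair bound on the inner $w$-sum. Write
\begin{align*}
\mathcal{D}_3^{\text{small}} \leq \mathop{\sum \sum}_{\substack{z_1, z_2 \\ 0 < |\Delta(z_1,z_2)| \leq \theta^{18} N}} \sum_{|w|^2 \leq 2M} \mathbf{1}_\mathcal{A}(\ell_1) \mathbf{1}_\mathcal{A}(\ell_2),
\end{align*}
where $\ell_i = \mathrm{Re}(z_i \overline{w})$.

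First I would count the pairs, paralleling the $\Delta = 0$ analysis following the definition of $\mathcal{D}_2'$. Since $z_1$ is primary primitive, writing $z_i = r_i + is_i$ with $(r_1,s_1) = 1$, the equation $r_1 s_2 - r_2 s_1 = \Delta$ places $z_2$ on a lattice line of direction $z_1$ with step size $|z_1| \asymp N^{1/2}$. The annulus $|z_2|^2 \asymp N$ then forces $O(1)$ lattice points per $(z_1, \Delta)$. Summing over $\ll N$ choices of $z_1$ and $\ll \theta^{18} N$ choices of $\Delta$ yields $O(\theta^{18} N^2)$ admissible pairs $(z_1, z_2)$.

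For the per-pair inner bound I would aim to show
\begin{align*}
\Sigma(z_1, z_2) := \sum_{|w|^2 \leq 2M} \mathbf{1}_\mathcal{A}(\ell_1) \mathbf{1}_\mathcal{A}(\ell_2) \ll M^{\gamma_0} N^{\gamma_0 - 1} (\log MN)^{O(1)},
\end{align*}
which matches the heuristic density: the map $w \mapsto (\ell_1, \ell_2)$ is $\mathbb{R}$-linear with Jacobian $\Delta$, so the image of the $w$-disk contains $\asymp M$ points of the image sublattice of index $|\Delta|$ in $\mathbb{Z}^2$, populating an ellipse in $[0,\sqrt{2(1+\theta)MN}]^2$ where the expected density of $\mathcal{A} \times \mathcal{A}$ is $\asymp (MN)^{\gamma_0 - 1}$. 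To prove this rigorously I would follow the template of Lemma \ref{prototypical estimation lemma one}: decompose the disk into squares of side $X \asymp M^{1/2}$ (with $X$ a power of $10$), detect $\mathbf{1}_\mathcal{A}(\ell_i)$ via additive characters, and arrive at a double integral in $(\alpha_1, \alpha_2) \in [0,1)^2$ of $F_X(\alpha_1) F_X(\alpha_2)$ weighted by the rectangular exponential sum
\begin{align*}
\sum_{(u,v) \in [0, X)^2} e\!\left(-u(\alpha_1 r_1 + \alpha_2 r_2) - v(\alpha_1 s_1 + \alpha_2 s_2)\right),
\end{align*}
which factors into a product of two 1D geometric sums. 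Reducing to primitive fractions and dyadically decomposing in the denominators of $\alpha_1, \alpha_2$, Lemmas \ref{maynard single denominator} and \ref{maynard type I} applied in each variable control the contribution away from the origin, while the trivial bound near $(\alpha_1,\alpha_2) = (0,0)$ yields the expected main term $\asymp M \cdot X^{2(\gamma_0 - 1)}$.

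Combining the two bounds gives
\begin{align*}
\mathcal{D}_3^{\text{small}} \ll \theta^{18} N^2 \cdot M^{\gamma_0} N^{\gamma_0 - 1} (\log MN)^{O(1)} = \theta^{18} (MN)^{\gamma_0} N (\log MN)^{O(1)},
\end{align*}
matching the statement up to the log power. The main obstacle is the per-pair bound on $\Sigma(z_1, z_2)$: the two-dimensional Fourier analysis coupled with the disk structure of the $w$-sum introduces phases that are coupled through the linear combinations $\alpha_1 r_1 + \alpha_2 r_2$ and $\alpha_1 s_1 + \alpha_2 s_2$, so extracting the expected cancellation requires a more careful deployment of the large-sieve machinery for $F_X$ than the one-dimensional instances appearing in Lemmas \ref{prototypical estimation lemma one} and \ref{prototypical estimation lemma two}.
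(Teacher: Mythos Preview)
Your pair count is fine, but the uniform per-pair bound $\Sigma(z_1,z_2) \ll M^{\gamma_0} N^{\gamma_0-1}$ is false, and this is a genuine gap. The heuristic that the image ellipse carries the ambient $\mathcal{A}\times\mathcal{A}$ density $(MN)^{\gamma_0-1}$ breaks down precisely when $|\Delta|$ is small, because the ellipse then has semi-axes of order $(MN)^{1/2}$ and $|\Delta|(M/N)^{1/2}$ and is extremely thin. Concretely, fix $\ell_1$; the solutions $w$ to $\mathrm{Re}(z_1\overline w)=\ell_1$ in the disk form an arithmetic progression of $\asymp (M/N)^{1/2}$ points, and along it $\ell_2$ moves in steps of exactly $|\Delta|$. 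Thus for fixed $\ell_1$ one is counting $\mathcal{A}$ in an arithmetic progression of modulus $|\Delta|$ inside a short interval of length $\asymp |\Delta|(M/N)^{1/2}$. When $|\Delta|=O(1)$ this is simply a short interval of length $\asymp (M/N)^{1/2}$ near $\ell_1$, giving $\asymp (M/N)^{\gamma_0/2}$ values of $\ell_2$ for typical $\ell_1\in\mathcal{A}$, and hence $\Sigma\asymp (MN)^{\gamma_0/2}(M/N)^{\gamma_0/2}=M^{\gamma_0}$, larger than your target by a factor $N^{1-\gamma_0}$. In your two-dimensional Fourier picture the same phenomenon appears: the rectangular sum is large not only at $(\alpha_1,\alpha_2)=(0,0)$ but on the whole lattice $\{(a_1,a_2): a_1r_i+a_2s_i\equiv 0\ (\mathrm{mod}\ Y),\ i=1,2\}$, whose index is governed by $|\Delta|$, so the ``near the origin'' contribution is not a single term.

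The paper avoids this by never attempting a uniform per-pair bound. Instead it keeps the $\ell_1,\ell_2$ variables, fixes $z_1,z_2,\ell_1$, and treats the sum over $\ell_2$ directly as a one-dimensional sum over $\mathcal{A}$ in an arithmetic progression modulo $|\Delta|$ inside an interval of length $\ll\theta^{18}(MN)^{1/2}$. The zero frequency then carries an explicit factor $|\Delta|^{-1}$; after summing $\ell_1$ and the $O(N)$ pairs $(z_1,z_2)$ with a given $\Delta$, one is left with $\sum_{|\Delta|\le\theta^{18}N}|\Delta|^{-1}$, which supplies only a logarithm. The nonzero frequencies are handled by Lemma~\ref{maynard type I} (this is where the restriction $N\ll x^{25/77-\epsilon}$ first enters). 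If you want to salvage your route, you would need to replace the target per-pair bound by one that tracks $|\Delta|$, e.g.\ $\Sigma(z_1,z_2)\ll (\theta^{18}(MN)^{1/2})^{\gamma_0}(MN)^{\gamma_0/2}|\Delta|^{-1}$, and then sum; but at that point you have essentially reproduced the paper's one-dimensional argument.
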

\begin{proof}
We estimate trivially the contribution from $|\Delta| \leq \theta^{18} N$. By the triangle inequality, this contribution is bounded by
\begin{align*}
D_3'(M,N) &:= \mathop{\sum\sum}_{\substack{N < |z_1|^2,|z_2|^2 \leq (1+\theta)N \\ (z_1,z_2)=1 \\ 0 < |\Delta| \leq \theta^{18} N}} \sum_{\substack{b(|\Delta|) \\ bz_1 \equiv z_2 (|\Delta|)}} \sum_{\ell_1 \ll (MN)^{1/2}} \mathbf{1}_\mathcal{A}(\ell_1) \sum_{\substack{\ell_2 \equiv \text{Re}(b) \ell_1 (|\Delta|) \\ \eqref{star condition}}} \mathbf{1}_\mathcal{A}(\ell_2),
\end{align*}
where \eqref{star condition} denotes the condition
\begin{align}\label{star condition}
\left|\ell_2 - \ell_1 \frac{z_2}{z_1} \right| \ll \theta^{18} (MN)^{1/2}.
\end{align}
Observe that $\eqref{star condition}$ forces $\ell_2$ to lie in an interval $I = I(\ell_1,z_2,z_2)$ of length $\leq c \theta^{18} (MN)^{1/2}$, for some positive constant $c$.

We use the ``intervals of length a power of ten'' technique we deployed in analyzing \eqref{mxuv mt, removed f ell, before fund} (see \eqref{fund lem short interval}). Let $Y$ be the largest power of 10 satisfying $Y \leq \theta^{18} (MN)^{1/2}$, and cover the interval $I$ with subintervals of the form $[nY,(n+1)Y)$, where $n$ is a nonnegative integer (observe that we require only $O(1)$ subintervals to cover $I$). Recalling from the proof of Lemma \ref{prototypical estimation lemma one} that we argue slightly differently depending on whether $a_0$ is zero or not, we have
\begin{align*}
\sum_{\substack{\ell_2 \equiv \text{Re}(b) \ell_1 (|\Delta|) \\ \eqref{star condition}}} \mathbf{1}_\mathcal{A}(\ell_2) &\leq \sum_{n \in S(I)} \mathbf{1}_\mathcal{A}(n) \sum_{\substack{\delta(a_0)Y/10 \leq t < Y \\  t+nY \equiv \text{Re}(b) \ell_1 (|\Delta|)}} \mathbf{1}_\mathcal{A}(t),
\end{align*}
where $S(I)$ is some set of integers depending on $I$. We detect the congruence condition via additive characters, and separate the zero frequency from the nonzero frequencies. On the nonzero frequencies we apply inclusion-exclusion and then the triangle inequality so that $t$ runs over an interval of the form $t < Y/10$ or $t < Y$. This application of the triangle inequality also removes the dependence on $n,b$, and $\ell_1$. It follows that
\begin{align*}
\sum_{\substack{\ell_2 \equiv \text{Re}(b) \ell_1 (|\Delta|) \\ \eqref{star condition}}} \mathbf{1}_\mathcal{A}(\ell_2) &\ll \frac{1}{|\Delta|} (\theta^{18} \sqrt{MN})^{\gamma_0} + \frac{1}{|\Delta|} (\theta^{18} \sqrt{MN})^{\gamma_0} \sum_{r=1}^{|\Delta| - 1} F_X \left(\frac{r}{|\Delta|} \right),
\end{align*}
where $X$ is a power of 10 with $X \asymp Y$.

The contribution $D_{3,0}'(M,N)$ to $D_3'(M,N)$ coming from the first term here is
\begin{align*}
D_{3,0}'(M,N) &\ll \theta^{18\gamma_0} (MN)^{\gamma_0} \sum_{0 < |\Delta| \leq \theta^K N} \frac{1}{|\Delta|} \sum_{N < |z_1|^2 \leq (1+\theta)N} \sum_{\substack{N < |z_2|^2\leq (1+\theta)N \\ \text{Im}(\overline{z_1}z_2) = \Delta}} 1.
\end{align*}
Let $z_1 = r+is$ with $(r,s) = 1$. Since $r^2 + s^2 > N$ this implies $rs \neq 0$. Let $z_2 = u+iv$, and note that $\text{Im}(\overline{z_1}z_2) = rv - su$. Let $(u_0,v_0)$ be a pair such that $rv_0 - su_0 = \Delta$. Then for any other pair $(u_1,v_1)$ such that $rv_1 - su_1 = \Delta$, we have
\begin{align*}
r(v_1 - v_0) - s(u_1-u_0) = 0.
\end{align*}
Since $r$ and $s$ are coprime, we see that $v_1 - v_0 = ks$ for some integer $k$, and $u_1 -u_0 = \ell r$ for some integer $\ell$. As $rs\neq 0$ we find that $k = \ell$, and thus $u_1 + iv_1 = u_0 + iv_0 + kz_1$. Since $|z_1| \asymp |z_2| \asymp N^{1/2}$, it follows that the number of choices for $z_2$, given $\Delta$ and $z_1$, is $O(1)$, and therefore
\begin{align*}
D_{3,0}'(M,N) &\ll \theta^{18\gamma_0 + 1} (\log N) (MN)^{\gamma_0} N \ll \theta^{18}(\log N) (MN)^{\gamma_0} N.
\end{align*}

We now turn to bounding the contribution of the nonzero frequencies $D_{3,*}'(M,N)$. Arguing as with $D_{3,0}'(M,N)$, we deduce that
\begin{align*}
D_{3,*}'(M,N) &\ll \theta^{18\gamma_0 + 1} (MN)^{\gamma_0} N \sum_{d \leq \theta^{18} N} \frac{1}{d} \sum_{r=1}^{d-1} F_X \left(\frac{r}{d} \right).
\end{align*}
We reduce to primitive fractions and perform dyadic decompositions to obtain
\begin{align*}
D_{3,*}'(M,N) &\ll \theta^{18\gamma_0 + 1} (\log N)^2 (MN)^{\gamma_0} N \sup_{Q \ll \theta^{18} N} \frac{1}{Q} \sum_{q \ll Q} \sum_{(b,q)=1} F_X \left(\frac{b}{q} \right).
\end{align*}
By Lemma \ref{maynard type I},
\begin{align}\label{eq: nonzero freqs small Delta}
\frac{1}{Q}\sum_q \sum_{(b,q)=1} F_X \left( \frac{b}{q}\right) \ll \frac{1}{Q^{23/77}} + \frac{Q}{X^{50/77}} \ll 1 + \frac{\theta^{18} N}{X^{50/77}}.
\end{align}
We wish for the quantity in \eqref{eq: nonzero freqs small Delta} to be $\ll 1$, so it suffices to have $N \ll x^{25/77 - \epsilon}$, and this in turn requires
\begin{align}\label{last bound for U}
U \geq x^{52/77 + \epsilon}.
\end{align}
The constraint \eqref{last bound for U} replaces \eqref{third bound for U}, and is the last lower bound condition we need to put on $U$. We deduce that the total contribution from $|\Delta| \leq \theta^{18}N$ is
\begin{align*}
\ll D_3'(M,N) &\ll\theta^{18} (\log N)^{2}  (MN)^{\gamma_0} N,
\end{align*}
as desired.
\end{proof}

We make now a brief detour to discuss our restrictions on $N, U$, and $V$. With the upper bound on $UV$ from \eqref{intermediate restrictions on U,V}, our lower bound for $V$ \eqref{lower bound for V}, and our lower bound for $U$ \eqref{last bound for U} in hand, there are no more conditions to put on $U$ or $V$, and the range of $N$ in \eqref{restrictions on N after remove small primes} is now clear. For these constraints to be consistent with one another it suffices to have
\begin{align}\label{admissible choices for U,V}
&U = x^\alpha , \ \ \ \ \ \ \  \  \ V = x^\beta, \nonumber \\
&\frac{52}{77} = 0.675\ldots < \alpha < \gamma_0 - \left(\frac{1}{2} - \frac{\gamma_0}{2} \right) = 0.931\ldots, \\
&\frac{1}{2} - \frac{\gamma_0}{2} = 0.0228\ldots < \beta < \gamma_0 - \alpha.\nonumber
\end{align}
Note that \eqref{admissible choices for U,V} is consistent with the specific choice we made in \eqref{sample choices of U,V}.

Let us return to estimations. With Lemma \ref{lem: remove small Delta} we have removed those moduli $|\Delta|$ that are substantially smaller than expected, and we now proceed with our task of making $b$ a rational residue class. We saw above that it suffices to impose the condition $(\ell_1,|\Delta|) = 1$. We expect to be able to impose this condition with the cost of only a small error since $(\ell_1,\Pi) = 1$. Indeed, it is for this step alone that we introduced the condition $(\ell,\Pi) = 1$ at the beginning of the proof of Theorem \ref{main theorem} (see the comments after \eqref{eq: naive sum}).

We estimate trivially the contribution from $(\ell_1,|\Delta|) \neq 1$. By the triangle inequality, it suffices to estimate
\begin{align*}
D_3''(M,N) &:= \sum_{p > P} \mathop{\sum \sum}_{\substack{N < |z_1|^2,|z_2|^2 \leq (1+\theta)N \\ (z_1,z_2)=1 \\ p \mid |\Delta| }} \sum_{bz_1 \equiv z_2 (|\Delta|)} \sum_{\substack{\ell_1 < X \\ p \mid \ell_1}} \mathbf{1}_\mathcal{A}(\ell_1) \sum_{\substack{\ell_2 < X \\ \ell_2 \equiv \text{Re}(b) \ell_1 (|\Delta|)}} \mathbf{1}_\mathcal{A}(\ell_2),
\end{align*}
where $X$ is a power of 10 with $X \asymp (MN)^{1/2}$. As has become typical, we introduce characters to detect the congruence on $\ell_2$ and then apply the triangle inequality to eliminate the dependence on $b,\ell_1$. We also apply additive characters to detect the congruence on $\ell_1$, obtaining
\begin{align*}
D_3''(M,N) &\ll (MN)^{\gamma_0} \sum_{P < p \ll N} \mathop{\sum \sum}_{\substack{N < |z_1|^2,|z_2|^2 \leq (1+\theta)N \\ (z_1,z_2)=1 \\ p \mid |\Delta|}} \frac{1}{p|\Delta|} \sum_{k=1}^p F_X \left(\frac{k}{p} \right) \sum_{r=1}^{|\Delta|} F_X \left( \frac{r}{|\Delta|} \right).
\end{align*}
By Lemma \ref{maynard single denominator} we find
\begin{align*}
\frac{1}{p}\sum_{k=1}^p F_X \left(\frac{k}{p} \right) \ll p^{-50/77} + X^{-50/77} \ll p^{-50/77},
\end{align*}
the last inequality following since $N < M$. Thus
\begin{align*}
D_3''(M,N) &\ll (MN)^{\gamma_0} N \sum_{P < p \ll N} p^{-50/77} \sum_{\substack{d \ll N \\ p \mid d}} \frac{1}{d} \sum_{r=1}^d F_X \left(\frac{r}{d} \right).
\end{align*}
We separate the contribution of the zero frequency $r = d$, and find that it contributes
\begin{align*}
\ll (\log N) (MN)^{\gamma_0} N P^{-50/77}.
\end{align*}
For the nonzero frequencies we reduce to primitive fractions, obtaining
\begin{align*}
&\sum_{p > P} p^{-50/77} \sum_{\substack{d \ll N \\ p \mid d}} \frac{1}{d} \sum_{r=1}^d F_X \left(\frac{r}{d} \right) \ll \sum_{p > P} p^{-50/77} \sum_{\substack{d \ll N \\ p \mid d}} \frac{1}{d} \sum_{\substack{q \mid d \\ q > 1}} \sum_{(a,q)=1} F_X \left( \frac{a}{q}\right) \\
&\ll \sum_{\substack{q \ll N }} \frac{1}{q}\sum_{(a,q)=1} F_X \left( \frac{a}{q}\right) \sum_{d' \ll N/q} \frac{1}{d'} \sum_{\substack{p > P \\ p \mid q}} p^{-50/77} \\
&+ \sum_{q \ll N} \frac{1}{q}\sum_{(a,q)=1} F_X \left( \frac{a}{q}\right) \sum_{\substack{d' \ll N/q}} \frac{1}{d'} \sum_{\substack{p > P \\ p \mid d'}} p^{-50/77}.
\end{align*}
Here we have written $d = qd'$ and used the fact that $p \mid qd'$ implies $p \mid q$ or $p \mid d'$. We change variables $d' = pd''$, say, and use the bound
\begin{align*}
\sum_{\substack{p > P \\ p \mid k}} \frac{1}{p^{50/77}} \ll (\log k)P^{-50/77},
\end{align*}
to obtain
\begin{align*}
D_3''(M,N) &\ll (\log N)^2 (MN)^{\gamma_0} N P^{-50/77} \sum_{q \ll N} \frac{1}{q}\sum_{(a,q)=1} F_X \left( \frac{a}{q}\right) \\ 
&\ll (\log N)^3 (MN)^{\gamma_0} N P^{-50/77}.
\end{align*}
The second inequality follows by a dyadic decomposition and Lemma \ref{maynard type I}. Thus the contribution from those $\ell_1$ not coprime to $|\Delta|$ is negligible.

Now that $b$ is a rational residue class, it remains only to make $b$ coprime with $|\Delta|$. It suffices to make $z_2$ coprime to $|\Delta|$ (recall that $z_1$ is already coprime to $|\Delta|$ since $z_1,z_2$ are coprime). Since $z_2$ has no small prime factors this condition is easy to impose. The details are by now familiar so we omit them. The error terms involved are of size
\begin{align*}
\ll (\log N)^{3} (MN)^{\gamma_0} N P^{-1}.
\end{align*}

In order to prove our desired bound \eqref{bound for cal D 1 MN}, it therefore suffices to study
\begin{align*}
\mathcal{D}_4(M,N) &:= \mathop{\sum \sum}_{\substack{N < |z_1|^2,|z_2|^2 \leq (1+\theta)N \\ (z_1 \overline{z_1} z_2 \overline{z_2},\Pi)=1 \\ (z_1 |\Delta|,z_2)=1 \\ |\Delta| > \theta^{18}N}} \mu(|z_1|^2) \mu(|z_2|^2) \sum_{bz_1 \equiv z_2 (|\Delta|)} \\ 
&\times\sum_{\substack{\ell_1 \leq \sqrt{2(1+\theta)} (MN)^{1/2} \\ (\ell_1,\Pi |\Delta|)=1}} \mathbf{1}_\mathcal{A}(\ell_1)   \sum_{\substack{\ell_2 \leq \sqrt{2(1+\theta)} (MN)^{1/2} \\ \ell_2\equiv b \ell_1 (|\Delta|) \\ |\ell_1 z_2 - \ell_2 z_1|^2 \leq 2 \Delta^2 M \\ (\ell_2,\Pi)=1}}  \mathbf{1}_\mathcal{A}(\ell_2), \nonumber
\end{align*}
and show
\begin{align}\label{bound for cal D c MN}
\mathcal{D}_4 (M,N) &\ll \theta^{18}(\log MN)^{O(1)}  (MN)^{\gamma_0} N.
\end{align}

Before we proceed further, there is another technical issue to resolve. As mentioned above, the sequence $\mathcal{A}$ is nicely distributed in residue classes to moduli that are coprime to 10, but things become more complicated if the modulus is not coprime to 10. In an effort to isolate this poor behavior at the primes 2 and 5 we write
\begin{align*}
\Delta = \Delta_{10} |\Delta'|,
\end{align*}
where $\Delta_{10}$ is a positive divisor of $10^\infty$ and $(|\Delta'|,10) = 1$. Note that $2 \mid \Delta_{10}$ since $z_1$ and $z_2$ are primary (see \eqref{z primary}). By the Chinese remainder theorem we can think about the congruence $\ell_2 \equiv b \ell_1 (|\Delta|)$ as two separate rational congruences, one to modulus $\Delta_{10}$ and one to modulus $|\Delta'|$. Because integers divisible only by the primes 2 and 5 form a very sparse subset of all the integers, we expect the contribution to $\mathcal{D}_4(M,N)$ from large $\Delta_{10}$ to be negligible. We finish this section with the following result.
\begin{lemma}\label{lem: remove large Delta10}
The contribution to $\mathcal{D}_4(M,N)$ from $\Delta_{10} > \theta^{-28}$ is
\begin{align*}
\ll \theta^{18} (\log MN)^{O(1)} (MN)^{\gamma_0} N.
\end{align*}
\end{lemma}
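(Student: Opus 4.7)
The plan is to exploit the sparsity of $\{2,5\}$-smooth integers via Rankin's trick: with $\alpha = 9/14 \in (0,1)$,
\begin{align*}
\sum_{\substack{\Delta_{10} \mid 10^\infty \\ \Delta_{10} > \theta^{-28}}} \frac{1}{\Delta_{10}} \leq \theta^{18} \sum_{\Delta_{10} \mid 10^\infty} \Delta_{10}^{-5/14} = \theta^{18} \prod_{p \in \{2,5\}} \bigl(1 - p^{-5/14}\bigr)^{-1} \ll \theta^{18},
\end{align*}
which is the source of the advertised $\theta^{18}$ saving.

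I first drop the M\"obius signs by the triangle inequality. By the argument at the beginning of the proof of Lemma \ref{lem: remove small Delta}, for each fixed value $|\Delta| = D$ in the admissible range $\theta^{18}N < D \ll N$, the number of coprime primary pairs $(z_1, z_2)$ with $|z_i|^2 \asymp N$ and $\mathrm{Im}(\overline{z_1}z_2) = \pm D$ is $O(N)$, and each such pair determines $b \pmod{D}$ uniquely. Writing $D = \Delta_{10}|\Delta'|$ with $\Delta_{10}\mid 10^\infty$ and $(|\Delta'|,10)=1$, it therefore suffices to bound
\begin{align*}
N \sum_{\substack{D \leq cN \\ \Delta_{10}(D) > \theta^{-28}}} T(D), \qquad T(D) := \sup_{a} \sum_{\ell_1 < X} \mathbf{1}_\mathcal{A}(\ell_1) \sum_{\substack{\ell_2 < X \\ \ell_2 \equiv a\ell_1 \, (D)}} \mathbf{1}_\mathcal{A}(\ell_2),
\end{align*}
where $X \asymp (MN)^{1/2}$ is a power of $10$.

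I estimate $T(D)$ by the short-interval-at-a-power-of-$10$ technique of Lemma \ref{prototypical estimation lemma one}: partition the $\ell_2$-sum into $O(1)$ intervals compatible with $\mathbf{1}_\mathcal{A}$, and detect the residue class $\ell_2 \equiv a\ell_1 \pmod{D}$ via additive characters. Separating the zero frequency gives
\begin{align*}
T(D) \ll \frac{X^{2\gamma_0}}{D}\biggl(1 + \sum_{r=1}^{D-1} F_X(r/D)\biggr).
\end{align*}
The contribution of the ``$1$'' piece to $N\sum_D T(D)$ is $\ll N X^{2\gamma_0} \sum_{\Delta_{10}(D)>\theta^{-28}} D^{-1}$, and invoking the Rankin estimate above together with $\sum_{|\Delta'| \leq N/\Delta_{10}}|\Delta'|^{-1} \ll \log N$ yields the target bound $\ll \theta^{18}(\log N)(MN)^{\gamma_0}N$. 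For the ``$F_X$'' piece I reduce $r/D$ to its primitive form $a/q$ with $q\mid D$, swap orders of summation, absorb the restriction $\Delta_{10}(D)>\theta^{-28}$ into the weight $1\leq \theta^{18}\Delta_{10}(D)^{9/14}$, and apply dyadic decomposition together with Lemma \ref{maynard type I} (using $N \ll x^{25/77-\epsilon}$ to control both terms of that lemma), in the same spirit as the treatment of $D_{3,*}'(M,N)$ in Lemma \ref{lem: remove small Delta}. A second application of Rankin's trick to the $\{2,5\}$-smooth parts of the denominators $q$ extracts once more a factor $\theta^{18}$ up to a $(\log MN)^{O(1)}$ loss, which combined with the $N$ prefactor yields the claimed bound.

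The main obstacle is the non-zero frequency piece: Lemma \ref{maynard small moduli} is inapplicable to $\{2,5\}$-smooth denominators, so only the averaged bound of Lemma \ref{maynard type I} is available, and the difficulty is in simultaneously preserving the averaging structure required by that lemma and threading the Rankin saving through the resulting double sum without losing a power of $N$. The weighting by $\Delta_{10}(D)^{9/14}$ together with the dyadic application of Lemma \ref{maynard type I} is precisely what resolves this tension.
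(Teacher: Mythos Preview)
Your treatment of the ``$1$'' piece (the zero-frequency contribution) is correct and mirrors the paper: Rankin's trick harvests the $\theta^{18}$ saving from the sparsity of $\{2,5\}$-smooth integers.

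The nonzero-frequency piece, however, has a real gap. After your Rankin weighting and swap, you are left (up to the prefactor $\theta^{18}(\log N)X^{2\gamma_0}$) with
\[
\Sigma \;=\; \sum_{1<q\ll N}\frac{\Delta_{10}(q)^{9/14}}{q}\sum_{\substack{a\bmod q\\(a,q)=1}}F_X\!\left(\frac{a}{q}\right),
\]
and you need $\Sigma\ll(\log MN)^{O(1)}$. Write $q=q_{10}q'$ with $q_{10}\mid 10^\infty$ and $(q',10)=1$. The mixed terms with both $q_{10}>1$ and $q'>1$ are the obstacle: Lemma~\ref{maynard single denominator} gives $\sum_{a}F_X(a/q)\ll q^{27/77}$, so the contribution is at least
\[
\sum_{q_{10}\mid 10^\infty} q_{10}^{-5/14+27/77}\sum_{q'\ll N/q_{10}}(q')^{27/77-1}\;\asymp\;\sum_{q_{10}}q_{10}^{-1/154}\,(N/q_{10})^{27/77},
\]
which carries a genuine power of $N$. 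Lemma~\ref{maynard type I} does no better here, since restricting to $q$ with a prescribed $q_{10}$-part forfeits the averaging that lemma needs. If instead you use the product formula \eqref{product formula for F} to strip $q_{10}$ from the denominator, the $a$-sum modulo $q_{10}$ contributes a factor $\gg\varphi(q_{10})$, and $\sum_{q_{10}\mid 10^\infty}q_{10}^{-5/14}\varphi(q_{10})$ diverges. Your ``second Rankin'' cannot repair this: once the constraint $\Delta_{10}(D)>\theta^{-28}$ has been traded for the weight, there is no further indicator on $q_{10}$ to which Rankin can be applied.

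The paper avoids this by splitting the congruence $\ell_2\equiv b\ell_1\pmod{|\Delta|}$ into its $\Delta_{10}$- and $|\Delta'|$-parts \emph{before} introducing characters. Nonzero frequencies modulo $|\Delta'|$ then have primitive denominator coprime to $10$, so Lemma~\ref{maynard small moduli} applies for small $q$ and Lemma~\ref{maynard type I} for large $q$ (exactly the argument of Lemma~\ref{prototypical estimation lemma two}), giving a saving of $\exp(-c\sqrt{\log MN})$ \emph{without} ever using $\Delta_{10}>\theta^{-28}$. The Rankin saving $\theta^{18}$ is needed only for the zero frequency modulo $|\Delta'|$, where one is left with $\sum_{s=1}^{\Delta_{10}}F_X(s/\Delta_{10})\ll\Delta_{10}^{27/77}$ by Lemma~\ref{maynard single denominator} and then $\sum_{\Delta_{10}>\theta^{-28},\,\Delta_{10}\mid 10^\infty}\Delta_{10}^{-50/77}\ll\theta^{18}(\log\log N)^{2}$. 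The moral: do not try to extract $\theta^{18}$ from all nonzero frequencies; separate off the part coprime to $10$ first and get exponential saving there for free.
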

\begin{proof}
The contribution to $\mathcal{D}_4(M,N)$ from $\Delta_{10} > \theta^{-28}$ is bounded above by a constant multiple of
\begin{align*}
\mathcal{D}_4' &:= \mathop{\sum \sum}_{\substack{|z_1|^2,|z_2|^2 \ll N \\ (z_1,z_2)=1 \\ \Delta_{10} > \theta^{-28}}} \sum_{\substack{b (|\Delta|) \\ bz_1 \equiv z_2 (|\Delta|)}} \sum_{\ell_1 < X} \mathbf{1}_\mathcal{A}(\ell_1) 
\sum_{\substack{\ell_2 < X \\ \ell_2 \equiv b \ell_1 (\Delta_{10}) \\ \ell_2 \equiv b \ell_1 (|\Delta'|)}} \mathbf{1}_\mathcal{A}(\ell_2),
\end{align*}
where $X \asymp (MN)^{1/2}$ is a power of 10. We apply additive characters to detect the congruences, obtaining
\begin{align*}
\sum_{\substack{\ell_2 < X \\ \ell_2 \equiv b \ell_1 (\Delta_{10}) \\ \ell_2 \equiv b \ell_1 (|\Delta'|)}} \mathbf{1}_\mathcal{A}(\ell_2) &= \frac{1}{\Delta_{10}|\Delta'|} \sum_{s=1}^{\Delta_{10}} e \left(-\frac{sb\ell_1}{\Delta_{10}} \right) \sum_{k=1}^{|\Delta'|} e \left( - \frac{kb\ell_1}{|\Delta'|} \right) \sum_{\ell_2 < X} \mathbf{1}_\mathcal{A}(\ell_2) e \left(\frac{k\ell_2}{|\Delta'|}+\frac{s\ell_2}{\Delta_{10}}  \right).
\end{align*}
We first consider the contribution from $k = |\Delta'|$. By the triangle inequality, the contribution from $k = |\Delta'|$ to $\mathcal{D}_4'$ is
\begin{align*}
&\ll (MN)^{\gamma_0} \mathop{\sum \sum}_{\substack{|z_1|^2,|z_2|^2 \ll N \\ (z_1,z_2)=1 \\ \Delta_{10} > \theta^{-28}}} \frac{1}{\Delta_{10}|\Delta'|} \sum_{s=1}^{\Delta_{10}} F_X \left( \frac{s}{\Delta_{10}} \right) \\ 
&\ll (MN)^{\gamma_0} N \sum_{\substack{\theta^{-28}<d \ll N \\ d \mid 10^\infty}}\frac{1}{d} \sum_{s=1}^{d}F_X \left( \frac{s}{d} \right) \sum_{|\Delta'| \ll N} \frac{1}{|\Delta'|} \\
&\ll (\log N) (MN)^{\gamma_0} N \sum_{\substack{d>\theta^{-28} \\ d \mid 10^\infty}} \frac{1}{d^{50/77}},
\end{align*}
the last inequality following by Lemma \ref{maynard single denominator}. By Rankin's trick and an Euler product computation,
\begin{align*}
\sum_{\substack{d > \theta^{-28} \\ d \mid 10^\infty}} \frac{1}{d^{50/77}} &\ll \theta^{28(50/77 - 1/\log \log N)} \sum_{d \mid 10^\infty} d^{-1/\log \log N} \ll \theta^{18}(\log \log N)^2 .
\end{align*}

Let us now turn to the case in which $1 \leq k \leq |\Delta'| - 1$. The argument is a more elaborate version of the proof of Lemma \ref{prototypical estimation lemma two}. Arguing as in the case $k = |\Delta'|$ and changing variables, this contribution is bounded by
\begin{align*}
\ll (MN)^{\gamma_0} N \sum_{\substack{t \ll N \\ t \mid 10^\infty}} \frac{1}{t}\sum_{s=1}^t \sum_{\substack{1 < e \ll N \\ (e,10)=1}} \frac{1}{e} \sum_{k=1}^{e-1} F_X \left(\frac{k}{e}+\frac{s}{t}  \right).
\end{align*}
Reducing from fractions with denominator $e$ to primitive fractions gives that this last quantity is bounded by
\begin{align*}
\ll \log N (MN)^{\gamma_0} N \sum_{\substack{t \ll N \\ t \mid 10^\infty}} \frac{1}{t} \sum_{s = 1}^{t} \sum_{\substack{1 < q \ll N \\ (q,10)=1}}\frac{1}{q} \sum_{(r,q)=1} F_X \left(\frac{r}{q} + \frac{s}{t} \right).
\end{align*}
We break the sum over $q$ into $q \leq Q$ and $q > Q$, where $Q = \exp(\varepsilon \sqrt{\log N})$ with $\varepsilon > 0$ sufficiently small. We first handle $q > Q$. Taking the supremum over $s$ and $t$, the contribution from $q > Q$ is
\begin{align*}
\ll (\log N)^3 \ \sup_{\beta \in \mathbb{R}} \ \sum_{\substack{Q < q \ll N \\ (q,10) = 1}} \frac{1}{q} \sum_{(r,q)=1} F_X \left(\frac{r}{q} + \beta \right).
\end{align*}
We break the range of $q$ into dyadic segments and apply Lemma \ref{maynard type I}, which gives that the contribution from $q > Q$ is
\begin{align*}
\ll \frac{(\log N)^4}{Q^{23/77}}.
\end{align*}

Now we turn to $q \leq Q$. We first show that the contribution from $t > T = Q^4$, say, is negligible. Interchanging the order of summation,
\begin{align*}
&\sum_{\substack{1 < q \leq Q \\ (q,10)=1}} \frac{1}{q} \sum_{(r,q)=1} \sum_{\substack{t > T \\ t \mid 10^\infty}} \frac{1}{t} \sum_{r=1}^t F_X \left(\frac{s}{t} + \frac{r}{q} \right) \ll Q \ \sup_{\beta \in \mathbb{R}} \sum_{\substack{t > T \\ t \mid 10^\infty}} \frac{1}{t} \sum_{r=1}^t F_X \left(\frac{s}{t} + \beta \right) \\
&\ll Q \sum_{\substack{t > T \\ t \mid 10^\infty}} \frac{1}{t^{50/77}},
\end{align*}
the last inequality following from Lemma \ref{maynard single denominator}. By Rankin's trick, we obtain the bound
\begin{align*}
Q \sum_{\substack{t > T \\ t \mid 10^\infty}} \frac{1}{t^{50/77}} \ll \frac{Q}{T^{1/2}} = \frac{1}{Q}.
\end{align*}
It therefore suffices to bound
\begin{align*}
\sum_{\substack{t \leq Q^4 \\ t \mid 10^\infty}} \frac{1}{t} \sum_{s=1}^t\sum_{\substack{1 < q \leq Q \\ (q,10)=1}}\frac{1}{q} \sum_{(r,q)=1} F_X \left(\frac{r}{q} + \frac{s}{t} \right).
\end{align*}
At this point we avail ourselves of the product formula \eqref{product formula for F} for $F$. We take $U$ to be a power of 10 such that $t$ divides $U$ for every $t \mid 10^\infty$ with $t \leq Q^4$, and set $V = X/U$. Any such $t \leq Q^4$ may be written as $t = 2^a 5^b$ with $2^a 5^b \leq Q^4$. Clearly $a,b \leq \frac{4}{\log 2} \log Q$. We take $U = 10^c$ with $c = \frac{4}{\log 2}\log Q + O(1)$, so that
\begin{align*}
U \asymp Q^{\frac{4}{\log 2} \log 10} \ll Q^{14},
\end{align*}
say. Since $F$ is 1-periodic we obtain
\begin{align*}
F_X \left(\frac{r}{q} + \frac{s}{t} \right) &= F_U \left(\frac{r}{q} + \frac{s}{t} \right) F_V \left(\frac{Ur}{q} + \frac{Us}{t} \right) = F_U \left(\frac{r}{q} + \frac{s}{t} \right) F_V \left(\frac{Ur}{q} \right) \ll F_V \left(\frac{Ur}{q} \right).
\end{align*}
Observe that $V$ and $X$ are asymptotically equal in the logarithmic scale since $Q = N^{o(1)}$. We then apply Lemma \ref{maynard small moduli} to bound each $F_V$ individually, and find
\begin{align}\label{elaboration of prev proto lemma}
\mathcal{D}_4' &\ll (\log N)^4 (MN)^{\gamma_0} N \left(\theta^{18} + \exp (-c' \sqrt{\log MN}) \right).
\end{align}
\end{proof}

In light of Lemma \ref{lem: remove large Delta10} it suffices to show that
\begin{align}\label{bound for cal D s MN}
\mathcal{D}_5 (M,N) &\ll \theta^{18}(\log MN)^{O(1)}  (MN)^{\gamma_0} N,
\end{align}
where $\mathcal{D}_5(M,N)$ is the same as $\mathcal{D}_4(M,N)$, but with the additional condition that $\Delta_{10} \leq \theta^{-28}$.

\section{Polar boxes and the fundamental lemma}\label{sec: polar boxes}

In this section we remove the congruence condition modulo $|\Delta'|$, which will simplify the situation considerably. Not surprisingly, there are several technical barriers to overcome before this can be accomplished. For instance, the condition
\begin{align*}
|\ell_1z_2 - \ell_2 z_1|^2 \leq 2 \Delta^2 M
\end{align*}
entangles the four variables $z_1,z_2,\ell_1$, and $\ell_2$. We put $z_1$ and $z_2$ into polar boxes in order to reduce some of this dependence. After restricting to ``generic'' boxes and removing as much $z_1$ and $z_2$ dependence as we can, we break the sum over $\ell_2$ into short intervals in preparation for applying additive characters. We employ the fundamental lemma to handle the condition $(\ell_2,\Pi) = 1$. The error term is estimated as we have done before, using distribution results for $F_Y$. With the congruence condition modulo $|\Delta'|$ removed, we can make some simplifications and adjustments in the main term. The last task is then to get cancellation from the M\"obius function in the main term, which we do in Section \ref{sec: endgame}.

We begin with a preparatory lemma.
\begin{lemma}\label{delta in short interval}
Let $\exp(- (\log MN)^{1/3}) < \delta < \frac{1}{2}$, and let $C' > 0$ be an absolute constant. Then
\begin{align*}
\mathcal{D}_5'(M,N) &:= \mathop{\sum \sum}_{\substack{N < |z_1|^2,|z_2|^2 \leq 2N \\ (z_1,z_2)=1 \\ (1-C'\delta)\theta^{18}N < |\Delta| \leq (1+C'\delta)\theta^{18}N \\ \Delta_{10} \leq \theta^{-28}}} \mathop{\sum \sum}_{\substack{\ell_1,\ell_2 \ll (MN)^{1/2} \\ \ell_1 z_2 \equiv \ell_2 z_1 (|\Delta|)}} \mathbf{1}_\mathcal{A}(\ell_1) \mathbf{1}_\mathcal{A}(\ell_2) \ll \delta \theta^{-28} (MN)^{\gamma_0} N.
\end{align*}
\end{lemma}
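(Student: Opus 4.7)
The strategy is to reduce the Gaussian congruence $\ell_1 z_2 \equiv \ell_2 z_1 \pmod{|\Delta|}$ to a single rational congruence, then to detect that congruence via additive characters and bound the nonzero-frequency contribution using Lemma~\ref{maynard type I}. Since $z_1 = r_1 + is_1$ is primitive, $(r_1, s_1) = 1$, and Bezout furnishes integers $a, b$ with $ar_1 + bs_1 = 1$; forming the combination $a \cdot \text{Re} + b \cdot \text{Im}$ of the Gaussian congruence produces the rational congruence $\ell_2 \equiv \mu \ell_1 \pmod{|\Delta|}$ with $\mu := ar_2 + bs_2$ depending only on $z_1, z_2$. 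Covering the range of $\ell_2$ by $O(1)$ intervals of length a power of $10$ (as in Lemma~\ref{prototypical estimation lemma one}) reduces the task to estimating
\begin{align*}
\mathop{\sum \sum}_{z_1, z_2} \sum_{\ell_1 < X} \mathbf{1}_\mathcal{A}(\ell_1) \sum_{\substack{\ell_2 < X \\ \ell_2 \equiv \mu \ell_1 \,(|\Delta|)}} \mathbf{1}_\mathcal{A}(\ell_2),
\end{align*}
with $X \asymp (MN)^{1/2}$ a power of $10$.

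Detecting the inner congruence with additive characters modulo $|\Delta|$ and isolating the zero frequency $r = |\Delta|$, the zero-frequency contribution is $\ll X^{2\gamma_0} \sum_{z_1, z_2} |\Delta|^{-1}$. In the short interval $I$ we have $|\Delta| \asymp \theta^{18} N$ uniformly, and the number of $(z_1, z_2)$ with $\text{Im}(\overline{z_1} z_2) = \Delta$ is $O(N)$ (from the proof of Lemma~\ref{lem: remove small Delta}), while the number of integer values $\Delta \in I$ is $O(\delta \theta^{18} N)$. Thus the zero-frequency piece is $\ll \delta N (MN)^{\gamma_0}$, comfortably within the target. For the nonzero frequencies, the triangle inequality eliminates $\mu \ell_1$, the trivial $\ell_1$ sum contributes $X^{\gamma_0}$, and changing variables $(z_1, z_2) \mapsto \Delta$ reduces the problem to bounding
\begin{align*}
X^{2\gamma_0} \theta^{-18} \sum_{|\Delta| \in I} \sum_{r=1}^{|\Delta|-1} F_X(r/|\Delta|).
\end{align*}
Reducing $r/|\Delta|$ to primitive form $a/q$ with $q \mid |\Delta|$ and swapping summations, the inner double sum becomes $\sum_{1 < q \ll \theta^{18} N} (|I|/q + 1) \sum_{(a, q) = 1} F_X(a/q)$, where $|I| \asymp \delta \theta^{18} N$. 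A dyadic decomposition on $q$ followed by Lemma~\ref{maynard type I} yields four error types arising from the product $(|I|/q + 1)(Q^{54/77} + Q^2 X^{-50/77})$; each, after multiplication by $X^{2\gamma_0} \theta^{-18}$, lies within the target $\delta \theta^{-28} (MN)^{\gamma_0} N$. The range $N \leq x^{25/77 - \epsilon}$ controls the $X^{-50/77}$-type errors, the lower bound $\delta > \exp(-(\log MN)^{1/3})$ exceeds any fixed negative power of $x$, and the generous $\theta^{-28} = (\log x)^{28L}$ absorbs all polylog losses for $L$ sufficiently large.

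The main technical obstacle is the bookkeeping involved in verifying that each of these four error terms fits within the target; the crucial structural point is that the $\delta$ factor in the target bound arises naturally from the length $|I| \asymp \delta \theta^{18} N$ of the short $\Delta$-interval, entering through the $|I|/q$ piece of the divisor-count bound $|I|/q + 1$.
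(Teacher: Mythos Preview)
Your proof is correct and takes a somewhat different route from the paper's. The paper splits the modulus as $|\Delta| = \Delta_{10}|\Delta'|$ and detects the two congruences separately with additive characters; the nonzero frequencies modulo $|\Delta'|$ are then dispatched by citing the argument that gave \eqref{elaboration of prev proto lemma} in Lemma~\ref{lem: remove large Delta10} (which combines Lemma~\ref{maynard type I} for large $q$ with Lemma~\ref{maynard small moduli} for small $q$, hence the need to strip off the $10$-part), while the zero frequency modulo $|\Delta'|$ leaves a factor $1/|\Delta'|$ which is converted to $\ll \theta^{-28}/(\theta^{18}N)$ via the hypothesis $\Delta_{10}\le \theta^{-28}$. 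You instead work directly modulo $|\Delta|$: Bezout produces a single rational congruence, additive characters are applied once, and the nonzero frequencies are bounded using Lemma~\ref{maynard type I} alone. This is more self-contained and in fact never uses the hypothesis $\Delta_{10}\le\theta^{-28}$; your zero-frequency term is $\delta(MN)^{\gamma_0}N$, a factor $\theta^{28}$ sharper than the stated bound. The paper's route has the advantage of reusing the machinery already built for Lemma~\ref{lem: remove large Delta10}, which is why it factors through the $\Delta_{10}$-$|\Delta'|$ split.

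One minor remark: your closing sentence about $\theta^{-28}$ absorbing polylog losses ``for $L$ sufficiently large'' is unnecessary and slightly misleading, since $L$ is fixed throughout. In fact your dyadic sums over $Q$ are geometric (the exponents $-23/77$, $54/77$, $1$, $2$ are all nonzero), so no polylog factor arises, and each of the four error terms already meets the target $\delta\theta^{-28}(MN)^{\gamma_0}N$ without any appeal to the size of $L$.
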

\begin{proof}
We begin by handling the congruence condition as we did in imposing the condition $\Delta_{10} \leq \theta^{-28}$ in Lemma \ref{lem: remove large Delta10}. We detect the congruences modulo $\Delta_{10}$ and $|\Delta'|$ with additive characters. The nonzero frequencies modulo $|\Delta'|$ contribute an acceptably small error term, by the argument that led to \eqref{elaboration of prev proto lemma}. For the zero frequency modulo $|\Delta'|$ we apply orthogonality of additive characters to reintroduce the congruence modulo $\Delta_{10}$. We find
\begin{align*}
\mathcal{D}_5'(M,N) &= (1+o(1))\mathop{\sum \sum}_{\substack{N < |z_1|^2,|z_2|^2 \leq 2N \\ (z_1,z_2)=1 \\ (1-C'\delta)\theta^{18}N < |\Delta| \leq (1+C'\delta)\theta^{18}N \\ \Delta_{10} \leq \theta^{-28}}} \frac{1}{|\Delta'|} \mathop{\sum \sum}_{\substack{\ell_1,\ell_2 \ll (MN)^{1/2} \\ \ell_1 z_2 \equiv \ell_2 z_1 (\Delta_{10})}} \mathbf{1}_\mathcal{A}(\ell_1) \mathbf{1}_\mathcal{A}(\ell_2).
\end{align*}
Since
\begin{align*}
\theta^{18} N \ll |\Delta| = \Delta_{10} |\Delta'|,
\end{align*}
we see that
\begin{align*}
|\Delta'|^{-1} \ll \theta^{-28} J^{-1},
\end{align*}
where $J = \theta^{18} N$. Dropping the congruence condition modulo $\Delta_{10}$, it follows that
\begin{align*}
\mathcal{D}_5'(M,N) &\ll \frac{\theta^{-28}}{J} (MN)^{\gamma_0} \mathop{\sum \sum}_{\substack{N < |z_1|^2,|z_2|^2 \leq 2N \\ (1-C'\delta)J < |\Delta| \leq (1+C'\delta)J}} 1 \\ 
&\ll \frac{\theta^{-28}}{J} (MN)^{\gamma_0} \mathop{\sum \sum}_{\substack{N < r^2 + s^2 \leq 2N \\ (r,s)=1}} \mathop{\sum \sum}_{\substack{u,v \ll N^{1/2} \\ (1-C'\delta)J < |rv-su| \leq (1+C'\delta)J}} 1.
\end{align*}
Observe that the conditions on $r$ and $s$ imply $rs \neq 0$. Given $r,s$, and $u$ the number of $v$ is $\ll \delta J/|r|$, and given $r,s$, and $v$ the number of $u$ is $\ll \delta J/|s|$. Since $\max(|r|,|s|) \gg N^{1/2}$, we see that
\begin{align*}
\mathop{\sum \sum}_{\substack{u,v \ll N^{1/2} \\ (1-C'\delta)J < |rv-su| \leq (1+C'\delta)J}} 1 \ll \delta J.
\end{align*}
Summing over $r$ and $s$ then completes the proof.
\end{proof}

We now introduce a parameter $\lambda = k^{-1}$, for some $k \in \mathbb{N}$ to be chosen. We break the sums over $z_1,z_2$ into polar boxes, so that
\begin{align*}
z \in \mathfrak{B} = \left\{w \in \mathbb{C} : R_i < |w|^2 \leq (1+\lambda)R_i, \ \delta_j \leq \text{arg}(w) \leq \delta_j + 2\pi \lambda \right\}.
\end{align*}
Note that $N < R_i \leq (1+\theta) N$ and $\delta_j = 2\pi j \lambda$ for $0 \leq j \leq \lambda^{-1}-1$ an integer. For such a polar box, let $z (\mathfrak{B}) := R_i e^{i\delta_j}$. The number of polar boxes for $z_1,z_2$ is $O \left(\lambda^{-4} \right)$, and we have the trivial bound
\begin{align*}
\sum_{\substack{z \in \mathfrak{B}}} 1 \ll \lambda^2 N.
\end{align*}
Set $\Delta(\mathfrak{B}_1,\mathfrak{B}_2) := \Delta (z (\mathfrak{B}_1),z (\mathfrak{B}_2))$. From the lower bound for $|\Delta|$, we see the polar boxes $\mathfrak{B}_1$, $\mathfrak{B}_2$ cannot be too close to one another, in a sense. Writing $z_i = r_i e^{i\theta_i}$, we see
\begin{align*}
|\Delta(z_1,z_2)| &= r_1r_2 |\sin(\theta_2 - \theta_1)| > \theta^{18} N,
\end{align*}
after using the fact that $e^{i\theta} = \cos \theta + i \sin \theta$. Since $r_1,r_2 \asymp N^{1/2}$, we have
\begin{align*}
|\sin(\theta_2 - \theta_1)| \gg \theta^{18}.
\end{align*}
Recall that $\theta_i = \delta_i + O(\lambda)$. If we assume that $\lambda \leq \varepsilon \theta^{18}$ for some sufficiently small $\varepsilon > 0$, then the sine angle addition formula and the triangle inequality imply
\begin{align*}
|\sin(\delta_2 - \delta_1)| \gg \theta^{18}.
\end{align*}
Thus the angles $\delta_1,\delta_2$ cannot be too close to each other. Given this fact, we may show in the same manner that
\begin{align}\label{box delta close to z delta}
\Delta(z_1,z_2) = (1+O(\lambda'))\Delta(\mathfrak{B}_1,\mathfrak{B}_2),
\end{align}
where $\lambda' = \theta^{-18} \lambda$.

We claim it suffices to sum over polar boxes $\mathfrak{B}_1,\mathfrak{B}_2$ such that $|\Delta(\mathfrak{B}_1,\mathfrak{B}_2)| > (1+\lambda') \theta^{18} N$. Indeed, the sum over polar boxes not satisfying this condition is bounded by
\begin{align*}
\mathop{\sum \sum}_{\substack{N < |z_1|^2,|z_2|^2 \leq 2N \\ (z_1,z_2)=1 \\ (1-C'\lambda')\theta^{18}N < |\Delta| \leq (1+C'\lambda')\theta^{18}N \\ \Delta_{10} \leq \theta^{-28}}} \mathop{\sum \sum}_{\substack{\ell_1,\ell_2 \ll (MN)^{1/2} \\ \ell_1 z_2 \equiv \ell_2 z_1 (|\Delta|)}} \mathbf{1}_\mathcal{A}(\ell_1) \mathbf{1}_\mathcal{A}(\ell_2)
\end{align*}
for some absolute constant $C' > 0$, and by Lemma \ref{delta in short interval} this quantity is $\ll\theta^{-28} \lambda' (MN)^{\gamma_0} N$. This bound is acceptable for \eqref{bound for cal D s MN} provided 
\begin{align*}
\lambda \leq \theta^{64},
\end{align*}
which we now assume.

The number of boxes intersecting the boundary of $\{z : N < |z|^2 \leq (1+\theta)N\}$ is $O(\lambda^{-2})$. Handling the congruences modulo $|\Delta'|$ and $\Delta_{10}$ as in Lemma \ref{delta in short interval}, we find the error made by this approximation is
\begin{align*}
\ll \theta^{-46} \lambda^2 (MN)^{\gamma_0} N,
\end{align*}
and this error is acceptable for \eqref{bound for cal D s MN} since we have already imposed the condition $\lambda \leq \theta^{64}$. We therefore have
\begin{align*}
\mathcal{D}_5(M,N) &= O \left(\theta^{18}(MN)^{\gamma_0}N \right) + \mathop{\sum \sum}_{\substack{\mathfrak{B}_1, \mathfrak{B}_2 \\ |\Delta(\mathfrak{B}_1,\mathfrak{B}_2)| > (1+\lambda')\theta^{18}N}} \mathcal{D}_1(\mathfrak{B}_1,\mathfrak{B}_2),
\end{align*}
where
\begin{align*}
\mathcal{D}_1(\mathfrak{B}_1,\mathfrak{B}_2) &:= \mathop{\sum \sum}_{\substack{z_1 \in \mathfrak{B}_1, z_2 \in \mathfrak{B}_2 \\ (z_1 \overline{z_1} z_2 \overline{z_2},\Pi)=1 \\ (z_1 |\Delta|,z_2)=1 \\ \Delta_{10} \leq \theta^{-28}}} \mu(|z_1|^2) \mu(|z_2|^2) \sum_{bz_1 \equiv z_2 (|\Delta|)} \\ 
&\times\sum_{\substack{\ell_1 \leq \sqrt{2(1+\theta)} (MN)^{1/2} \\ (\ell_1,\Pi |\Delta|)=1}} \mathbf{1}_\mathcal{A}(\ell_1)   \sum_{\substack{\ell_2 \leq \sqrt{2(1+\theta)} (MN)^{1/2} \\ \ell_2\equiv b \ell_1 (|\Delta|) \\ |\ell_1 z_2 - \ell_2 z_1|^2 \leq 2 \Delta^2 M \\ (\ell_2,\Pi)=1}}  \mathbf{1}_\mathcal{A}(\ell_2). \nonumber
\end{align*}
Observe that $\mathcal{D}_1(\mathfrak{B}_1,\mathfrak{B}_2)$ depends on $M$ and $N$, but we have suppressed this in the notation. It therefore suffices to show that
\begin{align}\label{bound for cal D B1 B2}
\mathcal{D}_1(\mathfrak{B}_1,\mathfrak{B}_2) &\ll \theta^{18}(\log MN)^{O(1)}  \lambda^4 (MN)^{\gamma_0} N
\end{align}
uniformly in $\mathfrak{B}_1$ and $\mathfrak{B}_2$.

We now work to make the condition $|\ell_1 z_2 - \ell_2 z_1|^2 \leq 2\Delta^2 M$ less dependent on $z_1$ and $z_2$. We can rearrange to get the condition
\begin{align}\label{not indep of z1 z2}
\left|\ell_2 - \frac{\ell_1 z_2}{z_1} \right| &\leq \sqrt{2} \frac{|\Delta(z_1,z_2)| M^{1/2}}{|z_1|}.
\end{align}
We wish to replace \eqref{not indep of z1 z2} by
\begin{align}\label{indep of z1 z2}
\left|\ell_2 - \frac{\ell_1 z_2}{z_1} \right| \leq \sqrt{2} \frac{|\Delta(\mathfrak{B}_1,\mathfrak{B}_2)| M^{1/2}}{|z(\mathfrak{B}_1)|}.
\end{align}
Since
\begin{align*}
\frac{|\Delta(z_1,z_2)| M^{1/2}}{|z_1|} &= (1+O(\lambda')) \frac{|\Delta(\mathfrak{B}_1,\mathfrak{B}_2)| M^{1/2}}{|z(\mathfrak{B}_1)|},
\end{align*}
we see it suffices to bound the contribution from those $\ell_2$ that satisfy
\begin{align}\label{eq: refining upper bound on difference}
(1-C\lambda') K \leq \left|\ell_2 - \frac{\ell_1 z_2}{z_1} \right| \leq (1+C\lambda') K,
\end{align}
where
\begin{align*}
K &:= \sqrt{2}\frac{|\Delta(\mathfrak{B}_1,\mathfrak{B}_2)| M^{1/2}}{|z(\mathfrak{B}_1)|}
\end{align*}
and $C>0$ is a sufficiently large absolute constant. 

We claim that \eqref{eq: refining upper bound on difference} places $\ell_2$ in a bounded number of intervals (depending on $\ell_1,z_1,z_2$) of length $\ll (\lambda')^{1/2} J$. For notational simplicity, write $A = (1-C\lambda') K$ and $B = (1+C\lambda') K$. Then \eqref{eq: refining upper bound on difference} gives
\begin{align*}
A \leq |\ell_2 - (u+iv)| \leq B
\end{align*}
for some real numbers $u,v$. Since $\ell_2$ is real, we obtain by squaring and rearranging
\begin{align*}
A^2-v^2 \leq (\ell_2-u)^2 \leq B^2 - v^2.
\end{align*}
There are two cases now to consider: $v \geq A$ and $v < A$. If $v \geq A$ then $A^2-v^2 \leq 0$, and the lower bound is therefore automatically satisfied. We therefore obtain
\begin{align*}
|\ell_2-u| \leq \sqrt{B^2-v^2} \leq \sqrt{B^2-A^2} \ll (\lambda')^{1/2} K.
\end{align*}
Now suppose that $v < A$. Then
\begin{align*}
\sqrt{A^2-v^2} \leq |\ell_2-u| \leq \sqrt{B^2-v^2},
\end{align*}
and thus $\ell_2$ is in two intervals of length $\leq \sqrt{B^2-v^2} - \sqrt{A^2-v^2}+2$, say. We then have
\begin{align*}
\sqrt{B^2-v^2} - \sqrt{A^2-v^2} &= \frac{B^2-A^2}{\sqrt{B^2-v^2} + \sqrt{A^2-v^2}} \leq \frac{B^2-A^2}{\sqrt{B^2-v^2}} \leq \sqrt{B^2-A^2},
\end{align*}
and this completes the proof of the claim.

We now bound the contribution of those $\ell_2$ satisfying \eqref{eq: refining upper bound on difference}. At this point we should have enough experience to see how we should proceed. We let $Y$ be the largest power of 10 satisfying $Y \leq (\lambda')^{1/2} K$, and cover the intervals \eqref{eq: refining upper bound on difference} with subintervals of the form $[nY,nY + Y)$, $n \geq 0$ an integer. The number of subintervals is $O(1)$. We can reduce to summing the indicator function $\mathbf{1}_\mathcal{A}(t)$ over $0 \leq t < Y$, and then deal with the congruence modulo $|\Delta|$ by considering it as a congruence modulo $|\Delta'|$ and $\Delta_{10}$. We obtain a bound of
\begin{align*}
\ll \theta^{-46} (\lambda')^{\gamma_0/2} \lambda^4 (MN)^{\gamma_0} N,
\end{align*}
and this is acceptable for \eqref{bound for cal D B1 B2} provided $\lambda \ll \theta^{153}$. We set
\begin{align*}
\lambda \asymp \theta^{153}.
\end{align*}

We now have the conditions
\begin{align}\label{eq: three conditions on ell2}
\ell_2 &\leq \sqrt{2 (1+\theta)} (MN)^{1/2}, \nonumber \\
\left|\ell_2 - \frac{\ell_1 z_2}{z_1} \right| &\leq \sqrt{2} \frac{|\Delta(\mathfrak{B}_1,\mathfrak{B}_2)| M^{1/2}}{|z(\mathfrak{B}_1)|}, \\
\ell_2 &\equiv b \ell_1 (|\Delta|) \nonumber
\end{align}
on $\ell_2$. Recall that the congruence is a congruence of rational integers. To handle the first two conditions we perform a short interval decomposition. Let $Y$ be the largest power of 10 which satisfies
\begin{align*}
Y \leq \lambda\frac{|\Delta(\mathfrak{B}_1,\mathfrak{B}_2)| M^{1/2}}{|z(\mathfrak{B}_1)|}.
\end{align*}
We cover the interval $\ell_2 \leq \sqrt{2 (1+\theta)} (MN)^{1/2}$ with subintervals of the form $[nY,nY + Y)$, as we have done many times before. For the subintervals that intersect the boundary of the second condition of \eqref{eq: three conditions on ell2} we obtain acceptable contributions. The sum over $\ell_2$ has therefore become
\begin{align*}
\sum_{\substack{n \in \mathbb{Z} \\ n \in S(\ell_1,z_1,z_2)}} \sum_{\substack{nY < \ell_2 \leq nY + Y \\ \ell_2 \equiv b \ell_1 (|\Delta|) \\ (\ell_2,\Pi)=1}} \mathbf{1}_\mathcal{A}(\ell_2),
\end{align*}
for some set $S(\ell_1,z_1,z_2)$ of size $O( \lambda^{-1})$.

We handle the condition $(\ell_2,\Pi) = 1$ using the fundamental lemma. Let
\begin{align*}
\Sigma &:= \mathop{\sum \sum}_{z_i \in \mathfrak{B}_i} \sum_{b(|\Delta|)} \sum_{\ell_1} \sum_n \sum_{\ell_2}
\end{align*}
be the sum we wish to bound (up to acceptable errors, $\Sigma$ is $\mathcal{D}(\mathfrak{B_1},\mathfrak{B}_2)$ with the condition \eqref{not indep of z1 z2} replaced by \eqref{indep of z1 z2}). We partition $\Sigma$ as
\begin{align*}
\Sigma = \Sigma_+ + \Sigma_-,
\end{align*}
where in $\Sigma_+$ we sum over those $z_1,z_2$ such that $\mu(|z_1|^2) \mu(|z_2|^2) > 0$, and in $\Sigma_-$ we sum over those $z_1,z_2$ such that $\mu(|z_1|^2) \mu(|z_2|^2) < 0$. We get an upper bound on $\Sigma_+$ using an upper-bound linear sieve of level $D$
\begin{align*}
\mathbf{1}_{(\ell_2,\Pi)=1} &\leq \mathbf{1}_{(\ell_2,10)=1} \sum_{\substack{d \leq D \\ d \mid \Pi/10 \\ d \mid \ell_2}} \lambda_d^{+},
\end{align*}
and a lower bound on $\Sigma_-$ using a lower-bound linear sieve of level $D$
\begin{align*}
\mathbf{1}_{(\ell_2,\Pi)=1} &\geq \mathbf{1}_{(\ell_2,10)=1} \sum_{\substack{d \leq D \\ d \mid \Pi/10 \\ d \mid \ell_2}} \lambda_d^{-},
\end{align*}
where $D$ is chosen shortly (see \eqref{choice for D}). This yields an upper bound on $\Sigma$. Reversing $\lambda^+$ and $\lambda^-$ we get a lower bound on $\Sigma$, and we show that these bounds are the same asymptotically. Thus, for some sign $\epsilon$, it suffices to study
\begin{align}\label{eq: Sigma epsilon}
\Sigma_{\epsilon}^{\pm} := \mathop{\sum \sum}_{\substack{z_i \in \mathfrak{B}_i \\ \mu(|z_1|^2)\mu(|z_2|^2) = \epsilon}}  \sum_{\substack{d \leq D \\ d \mid \Pi \\ (d,10|\Delta|)=1}} \lambda_d^{\pm}  \sum_{b{ (|\Delta|)}} \sum_{\ell_1} \sum_{n} \sum_{\substack{nY < \ell_2 \leq nY + Y \\ \ell_2 \equiv b \ell_1 (|\Delta|) \\ \ell_2 \equiv 0 (d) \\ (\ell_2,10)=1}} \mathbf{1}_\mathcal{A}(\ell_2).
\end{align}
Observe that we have suppressed several conditions in the notation, but these conditions are not to be forgotten.

We write the congruence modulo $|\Delta|$ as two congruences modulo $\Delta_{10}$ and $|\Delta'|$, and then use the Chinese remainder theorem to combine the congruences modulo $d$ and $|\Delta'|$ into a congruence modulo $d|\Delta'|$. Considering separately the cases $a_0 \neq 0$ and $a_0 = 0$ and then applying inclusion-exclusion if necessary, we can reduce to having the sum over $\ell_2$ be a sum over $0 \leq t < Y'$, where $Y' = Y$ or $Y' = Y/10$. Applying additive characters, the sum over $\ell_2$ in \eqref{eq: Sigma epsilon} becomes a linear combination of a bounded number of quantities of the form
\begin{align*}
\frac{1}{d|\Delta'|} \sum_{f=1}^{d|\Delta'|} e \left(\frac{-f \nu}{d|\Delta'|} \right) \sum_{\substack{t < Y' \\ t + nY \equiv b \ell_1 (\Delta_{10}) \\ (t,10)=1)}} \mathbf{1}_\mathcal{A}(t) e \left(\frac{ft}{d|\Delta'|} \right),
\end{align*}
where $\nu = \nu(z_1,z_2,\ell_1,n)$ is some residue class. The term $f = d|\Delta'|$ supplies the main term, which we discuss later. For now we turn our attention to the error term $\Sigma_{\epsilon,E}^{\pm}$, which comes from $1 \leq f \leq d|\Delta'| - 1$. The argument is similar to that which gave \eqref{elaboration of prev proto lemma} in Lemma \ref{lem: remove large Delta10}. 

We apply additive characters to detect the congruence modulo $\Delta_{10}$, apply M\"obius inversion to trade the condition $(t,10) = 1$ for congruence conditions, and then apply additive characters again to detect these latter congruence conditions. We then apply the triangle inequality to eliminate the dependencies on $\ell_1,b$, and $n$. We obtain
\begin{align*}
\Sigma_{\epsilon,E}^{\pm} &\ll \lambda^{-1} (MN)^{\gamma_0} \sum_{\substack{d \leq D \\ (d,10) = 1}} \sum_{\substack{|\Delta'| \ll N \\ (|\Delta'|,10)=1 \\ d|\Delta'| > 1}} \frac{1}{d|\Delta'|} \sum_{f=1}^{d|\Delta'|-1}\sum_{\substack{\Delta_{10} \leq \theta^{-22} \\ \Delta_{10} \mid 10^\infty}} \frac{1}{\Delta_{10}} \sum_{g=1}^{\Delta_{10}} \sum_{h \mid 10} \sum_{k=1}^h \\ 
&\times F_{Y'} \left(\frac{f}{d|\Delta'|} + \frac{g}{\Delta_{10}} + \frac{k}{h} \right) \sum_{z_1 \in \mathfrak{B}_1} \sum_{\substack{z_2 \in \mathfrak{B}_2 \\ \text{Im}(\overline{z_1}z_2) = \Delta_{10}\Delta'}} 1 \\
&\ll \lambda (MN)^{\gamma_0} N \sum_{\substack{1 < d \ll DN \\ (d,10)=1}} \frac{\tau(d)}{d} \sum_{f = 1}^{d-1} \sum_{\substack{\Delta_{10} \leq \theta^{-22} \\ \Delta_{10} \mid 10^\infty}} \frac{1}{\Delta_{10}} \sum_{g=1}^{\Delta_{10}} \sum_{h \mid 10} \sum_{k=1}^h F_{Y'} \left(\frac{f}{d} + \frac{g}{\Delta_{10}} + \frac{k}{h} \right).
\end{align*}
The second inequality follows, among other things, by changing variables $d |\Delta'| \rightarrow d$. We reduce to primitive fractions to obtain
\begin{align*}
\Sigma_{\epsilon,E}^{\pm}  &\ll (\log N)^2 \lambda (MN)^{\gamma_0} N \sum_{\substack{1 < q \ll DN \\ (q,10)=1}} \frac{\tau(q)}{q} \sum_{\substack{r = 1 \\ (r,q)=1}}^{q} \sum_{\substack{\Delta_{10} \leq \theta^{-22} \\ \Delta_{10} \mid 10^\infty}} \frac{1}{\Delta_{10}} \sum_{g=1}^{\Delta_{10}} \sum_{h \mid 10} \sum_{k=1}^h F_{Y'} \left(\frac{r}{q} + \frac{g}{\Delta_{10}} + \frac{k}{h} \right).
\end{align*}
We choose 
\begin{align}\label{choice for D}
D := x^{1/\log \log x},
\end{align}
so that $DN \ll x^{25/77 - \epsilon}$. We estimate the contribution from $q > Q = \exp((\log MN)^{1/3})$ using the divisor bound, dyadic decomposition, and Lemma \ref{maynard type I}. For $q \leq Q$ we first use the product formula for $F_{Y'}$ to eliminate $\frac{g}{\Delta_{10}} + \frac{k}{h}$, and then use Lemma \ref{maynard small moduli}.

Let us now turn to the main term we alluded to above. We reverse the transition from $\ell_2$ to $t$, and then undo our short interval decomposition. Up to acceptable error terms, the main term is then given by
\begin{align*}
\Sigma_{\epsilon,0}^{\pm} &:= \mathop{\sum \sum}_{\substack{z_i \in \mathfrak{B}_i \\ \mu(|z_1|^2)\mu(|z_2|^2) = \epsilon}} \frac{1}{|\Delta'|}  \sum_{\substack{d \leq D \\ d \mid \Pi \\ (d,10|\Delta|)=1}} \frac{\lambda_d^{\pm}}{d} \sum_{\substack{\ell_1\leq \sqrt{2(1+\theta)}(MN)^{1/2} \\ (\ell,\Pi |\Delta|)=1}} \mathbf{1}_\mathcal{A}(\ell_1) \sum_{\substack{\ell_2 \leq \sqrt{2(1+\theta)}(MN)^{1/2} \\ \ell_2z_1 \equiv \ell_1 z_2 (\Delta_{10}) \\ \eqref{indep of z1 z2} \\ (\ell_2,10)=1}} \mathbf{1}_\mathcal{A}(\ell_2).
\end{align*}
From the fundamental lemma of sieve theory (see \eqref{fund lem ref}, for example) we have
\begin{align}\label{eq: last fund lem}
\sum_{\substack{d \leq D \\ d \mid \Pi \\ (d,10|\Delta|)=1}} \frac{\lambda_d^{\pm}}{d} &= \left(1 + \exp \left(-\frac{1}{2}s \log s \right) \right) \prod_{\substack{p \leq P \\ p \nmid 10 |\Delta|}} \left(1 - \frac{1}{p} \right),
\end{align}
where
\begin{align*}
s = \frac{\log D}{\log P} \geq \sqrt{\log x} \gg \sqrt{\log MN}.
\end{align*}
The error term of \eqref{eq: last fund lem} is therefore acceptably small for \eqref{bound for cal D B1 B2}. We write
\begin{align*}
\prod_{\substack{p \leq P \\ p \nmid 10 |\Delta|}} \left(1 - \frac{1}{p} \right) &= \prod_{p \leq P} \left(1 - \frac{1}{p} \right) \prod_{\substack{p \leq P \\ p \mid 10 |\Delta|}} \left(1 - \frac{1}{p} \right)^{-1} \\ 
&= \left(1 + O \left( \frac{\log N}{P} \right) \right) \prod_{p \leq P} \left(1 - \frac{1}{p} \right) \prod_{p \mid 10 |\Delta|} \left(1 - \frac{1}{p} \right)^{-1} \\
&=\left(1 + O \left( \frac{\log N}{P} \right) \right) \prod_{p \leq P} \left(1 - \frac{1}{p} \right) \frac{10|\Delta|}{\varphi(10|\Delta|)},
\end{align*}
and observe that the error term is again acceptable by our lower bound for $P$. Thus $\Sigma_{\epsilon,0}^{+}$ and $\Sigma_{\epsilon,0}^{-}$ are asymptotically equal, and up to acceptable error terms we have
\begin{align*}
\Sigma &= \mathop{\sum \sum}_{\substack{z_1 \in \mathfrak{B}_1, z_2 \in \mathfrak{B}_2 \\ (z_1 \overline{z_1} z_2 \overline{z_2},\Pi)=1 \\ (z_1 |\Delta|,z_2)=1 \\ \Delta_{10} \leq \theta^{-28}}} \mu(|z_1|^2) \mu(|z_2|^2) \frac{1}{|\Delta'|} \frac{10|\Delta|}{\varphi(10|\Delta|)} \\
&\times\sum_{\substack{\ell_1\leq \sqrt{2(1+\theta)}(MN)^{1/2} \\ (\ell,\Pi |\Delta|)=1}} \mathbf{1}_\mathcal{A}(\ell_1) \sum_{\substack{\ell_2 \leq \sqrt{2(1+\theta)}(MN)^{1/2} \\ \ell_2z_1 \equiv \ell_1 z_2 (\Delta_{10}) \\ \eqref{indep of z1 z2} \\ (\ell_2,10)=1}} \mathbf{1}_\mathcal{A}(\ell_2).
\end{align*}

We may use trivial estimations to replace condition \eqref{indep of z1 z2} by
\begin{align*}
|\ell_1z_2 - \ell_2 z_1|^2 \leq 2 \Delta(\mathfrak{B}_1,\mathfrak{B}_2)^2 M.
\end{align*}
Further, by trivial estimation we may also remove the conditions $(z_2,|\Delta|)=1$, $(z_1,z_2) = 1$, and $(\ell_1,|\Delta|) = 1$ at the cost of an acceptable error. Having removed these conditions, we then write
\begin{align*}
\frac{1}{|\Delta'|} = \frac{\Delta_{10}}{|\Delta|} = (1+O(\lambda')) \frac{\Delta_{10}}{|\Delta(\mathfrak{B}_1,\mathfrak{B}_2)|}
\end{align*}

It follows that
\begin{align*}
\mathcal{D}_1(\mathfrak{B}_1,\mathfrak{B}_2) &= |\Delta(\mathfrak{B}_1,\mathfrak{B}_2)|^{-1} \prod_{p \leq P} \left(1 - \frac{1}{P} \right) \mathcal{D}_2 (\mathfrak{B}_1,\mathfrak{B}_2) + O \left( \theta^{18}(\log MN)^{O(1)}  \lambda^4 (MN)^{\gamma_0} N \right),
\end{align*}
where
\begin{align*}
\mathcal{D}_2 (\mathfrak{B}_1,\mathfrak{B}_2) &:= \mathop{\sum \sum}_{\substack{z_1 \in \mathfrak{B}_1, z_2 \in \mathfrak{B}_2 \\ (z_1 \overline{z_1} z_2 \overline{z_2},\Pi)=1 \\ \Delta_{10} \leq \theta^{-28}}} \mu(|z_1|^2) \mu(|z_2|^2) \Delta_{10} \frac{10 |\Delta|}{\varphi(10|\Delta|)} \\ 
&\times \mathop{\sum \sum}_{\substack{\ell_1,\ell_2 \leq \sqrt{2(1+\theta)}(MN)^{1/2} \\ \ell_2z_1 \equiv \ell_1 z_2 (\Delta_{10}) \\|\ell_1z_2 - \ell_2 z_1|^2 \leq 2 \Delta(\mathfrak{B}_1,\mathfrak{B}_2)^2 M \\ (\ell_1, \Pi) = 1, \ (\ell_2,10)=1 }} \mathbf{1}_\mathcal{A}(\ell_1) \mathbf{1}_\mathcal{A}(\ell_2).
\end{align*}
Recall the lower bound $|\Delta(\mathfrak{B}_1,\mathfrak{B}_2)| \gg \theta^{18}N$. In order to prove \eqref{bound for cal D B1 B2} it therefore suffices to show that
\begin{align}\label{bound for cal D 0 B1 B2}
\mathcal{D}_2(\mathfrak{B}_1,\mathfrak{B}_2) &\ll \theta^{36}\lambda^4(\log MN)^{O(1)}   (MN)^{\gamma_0} N^2.
\end{align}

\section{Simplifications and endgame}\label{sec: endgame}

We have removed the congruence condition to modulus $|\Delta'|$. From this point onwards our estimates are more straightforward, since we do not have to work with congruence conditions on elements of $\mathcal{A}$ to large moduli.

Recall that our goal is to use the cancellation induced by the M\"obius function to show that $\mathcal{D}_2$ is small. We do not need to perform any averaging over $\ell_1$ and $\ell_2$, so we reduce to considering a sum over $z_1$ and $z_2$. After some manipulations, including splitting into more polar boxes to separate $z_1$ and $z_2$, we reduce to finding cancellation when $z_1$ and $z_2$ are summed over arithmetic progressions whose moduli are bounded by a fixed (but large) power of a logarithm. We detect these congruences with multiplicative characters. We can then get cancellation from the zero-free region for Hecke $L$-functions, even in the presence of an exceptional zero.

We interchange the order of summation in $\mathcal{D}_2(\mathfrak{B}_1,\mathfrak{B}_2)$, putting the sums over $\ell_1$ and $\ell_2$ on the outside and the sums over $z_1$ and $z_2$ on the inside. With $\ell_1$ and $\ell_2$ fixed, we then write
\begin{align*}
\mathop{\sum \sum}_{\substack{z_1 \in \mathfrak{B}_1, z_2 \in \mathfrak{B}_2 \\ \Delta_{10} \leq \theta^{-28}}} \Delta_{10} \leq \sum_{\substack{f \leq \theta^{-28} \\ f \mid 10^\infty}} f  \left|\mathop{\sum \sum}_{\substack{z_1 \in \mathfrak{B}_1, z_2 \in \mathfrak{B}_2 \\ \Delta_{10} =f}}\right|.
\end{align*}
We can exchange $10|\Delta|/\varphi(10|\Delta|)$ for $|\Delta|/\varphi(|\Delta|)$ by considering separately those $f$ divisible by 5 and those $f$ not divisible by 5, and pulling out potential factors of $5/\varphi(5)$ (recall that $|\Delta|$ is always divisible by 2). To show \eqref{bound for cal D 0 B1 B2} it therefore suffices to prove
\begin{align}\label{eq: sum of two mobius}
\mathcal{C} := \mathop{\sum \sum}_{\substack{z_i \in \mathfrak{B}_i \\ (|z_i|^2, \Pi)=1 \\ \Delta_{10}=f \\ \ell_1 z_2 \equiv \ell_2 z_2 (f) \\ |\ell_1z_2 - \ell_2 z_1|^2 \leq 2 \Delta(\mathfrak{B}_1,\mathfrak{B}_2)^2 M}} \mu(|z_1|^2)\mu(|z_2|^2) \frac{|\Delta|}{\varphi(|\Delta|)} \ll \theta^{92}(\log MN)^{O(1)}  \lambda^4 N^2
\end{align}
uniformly in $f \leq \theta^{-28}$ with $f \mid 10^\infty$, and $\ell_1,\ell_2 \ll (MN)^{1/2}$ with $(\ell_1\ell_2,10) = 1$. Note that $\mathcal{C}$ depends on $\mathfrak{B}_1,\mathfrak{B}_2, \ell_1$, $\ell_2$, and $f$, but we have suppressed this dependence for notational convenience.

If $n$ is a positive integer, then
\begin{align*}
\frac{n}{\varphi(n)} =\sum_{d \mid n} \frac{\mu^2(d)}{\varphi(d)},
\end{align*}
and therefore
\begin{align}\label{eq: expanded cal C f}
\mathcal{C} = \sum_{d \ll N} \frac{\mu^2(d)}{\varphi(d)}\mathop{\sum \sum}_{\substack{z_i \in \mathfrak{B}_i \\ (|z_i|^2, \Pi)=1 \\ \Delta_{10}=f \\ \ell_1 z_2 \equiv \ell_2 z_2 (f) \\ \Delta \equiv 0 (d) \\ |\ell_1z_2 - \ell_2 z_1|^2 \leq 2 \Delta(\mathfrak{B}_1,\mathfrak{B}_2)^2 M}} \mu(|z_1|^2)\mu(|z_2|^2).
\end{align}
We introduce a parameter $W$, and estimate trivially the contribution from $d > W$ in \eqref{eq: expanded cal C f}. Writing $z_1$ and $z_2$ in rectangular coordinates, we see the contribution from $d > W$ is bounded by
\begin{align*}
E_W := \sum_{W < d \ll N} \frac{\mu^2(d)}{\varphi(d)} \mathop{\sum \sum \sum \sum}_{\substack{r,s,u,v \ll N^{1/2} \\ rv \equiv su (d)}} 1.
\end{align*}
If $d,r,s$, and $u$ are fixed, then $v$ is fixed modulo $d/(d,r)$, which yields
\begin{align*}
E_W &\ll  N^{3/2} \sum_{W < d \ll N} \frac{\mu^2(d)}{\varphi(d)d} \sum_{r \ll N^{1/2}} (r,d)+(\log N) N^{3/2} \\
&\ll N^2 \sum_{d > W} \frac{\mu^2(d) \tau(d)}{\varphi(d) d} + (\log N)^2 N^{3/2} \\
&\ll (\log W) W^{-1} N^2 + (\log N)^2 N^{3/2}.
\end{align*}
Setting 
\begin{align*}
W = \theta^{-92} \lambda^{-4} \asymp \theta^{-704}
\end{align*}
then gives an acceptable contribution for \eqref{eq: sum of two mobius}.

The rational congruence $\Delta \equiv 0 (d)$ is equivalent to the Gaussian congruence $\overline{z_1} z_2 \equiv z_1 \overline{z_2} (2d)$. Since $(z_i \overline{z_i},\Pi) = 1$ and $2d \ll W$, we see that $(z_1\overline{z_1} z_2 \overline{z_2},2d) = 1$. We detect this congruence with multiplicative characters modulo $2d$. Since $(\ell_1\ell_2,10) = 1$, we may also detect the congruence $z_1 \ell_2 \equiv z_2 \ell_1 (f)$ with multiplicative characters.

We handle the condition $\Delta_{10} = f$ as follows. Write $f = 2^a5^b$. Then $\Delta_{10} = f$ if and only if
\begin{align*}
\Delta &\equiv 0 \pmod{2^a}, \ \ \ \ \Delta \not \equiv 0 \pmod{2^{a+1}}, \\
\Delta &\equiv 0 \pmod{5^b}, \ \ \ \ \Delta \not \equiv 0 \pmod{5^{b+1}}.
\end{align*}
These congruences are equivalent to
\begin{align*}
\Delta &\equiv 2^a \pmod{2^{a+1}}, \\
\Delta &\equiv 5^b,2 \cdot 5^b, 3 \cdot 5^b, \text{ or } 4 \cdot 5^b \pmod{5^{b+1}},
\end{align*}
and by the Chinese remainder theorem these are equivalent to
\begin{align*}
\Delta &\equiv \nu_1, \nu_2, \nu_3, \text{ or } \nu_4 \pmod{10f},
\end{align*}
for some residue classes $\nu_i$. We therefore write our sum over $z_1$ and $z_2$ as
\begin{align*}
\mathop{\sum \sum }_{\substack{z_1,z_2 \\ \Delta_{10} = f}} =  \mathop{\sum \sum}_{\substack{\mathfrak{m}, \mathfrak{n} (10f) \\ \text{Im}(\overline{\mathfrak{m}}\mathfrak{n}) \equiv \nu_i (10f)}} \sum_{z_1 \equiv \mathfrak{m} (10f)} \sum_{z_2 \equiv \mathfrak{n} (10f)}.
\end{align*}
Observe that the residue classes $\mathfrak{m}, \mathfrak{n}$ are primitive since $(z_i,\Pi) = 1$. We trivially have
\begin{align*}
\mathop{\sum \sum}_{\substack{\mathfrak{m}, \mathfrak{n} (10f) \\ \text{Im}(\overline{\mathfrak{m}}\mathfrak{n}) \equiv \nu_i (10f)}} 1 \ll f^4,
\end{align*}
so to prove \eqref{eq: sum of two mobius} it suffices to show that
\begin{align}\label{simplif unlabd bound b4 S}
\mathop{\sum \sum}_{\substack{z_i \in \mathfrak{B}_i \\ (|z_i|^2,\Pi)=1 \\ |\ell_1z_2 - \ell_2 z_1|^2 \leq 2 \Delta(\mathfrak{B}_1,\mathfrak{B}_2)^2 M}} \mu(|z_1|^2) \psi'(z_1) \mu(|z_2|^2) \psi(z_2) \ \ll \theta^{204} \lambda^4(\log MN)^{O(1)} N^2
\end{align}
uniformly in characters $\psi'$ and $\psi$. Here
\begin{align*}
\psi(\mathfrak{m}) = \chi(\mathfrak{m}) \overline{\chi}(\overline{\mathfrak{m}}) \zeta(\mathfrak{m}) \phi(\mathfrak{m}),
\end{align*}
where $\chi$ is a character modulo $2d$, $\zeta$ is a character modulo $f$, and $\phi$ is a character modulo $10f$. The character $\psi'$ is given similarly. The bar denotes complex conjugation and not multiplicative inversion. Observe that $\psi,\psi'$ are characters with moduli at most $O(d^2 f^2) = O(\theta^{-1464})$. Taking the supremum over $z_1$, it suffices to show that
\begin{align}\label{bound for cal S}
\mathcal{S} &:= \sum_{\substack{z_2 \in \mathfrak{B}_2 \\ (z_2 \overline{z_2},\Pi)=1 \\ \eqref{disc int pol box}}} \mu(|z_2|^2) \psi(z_2) \ll \theta^{204} \lambda^2 (\log MN)^{O(1)} N,
\end{align}
uniformly in $\psi, z_1, \ell_1$, and $\ell_2$. The last condition in the summation conditions for $\mathcal{S}$ is
\begin{align}\label{disc int pol box}
\left|z_2 - z_1 \frac{\ell_2}{\ell_1} \right| \leq \sqrt{2} \frac{|\Delta(\mathfrak{B}_1,\mathfrak{B}_2)| M^{1/2}}{\ell_1}.
\end{align}
We see that \eqref{disc int pol box} forces $z_2$ to lie in some disc in the Gaussian integers. Since $z_2$ already lies in a polar box, we need to understand the intersection of a polar box with a disc. 

We introduce a parameter $\varpi$. We cover $\mathfrak{B}_2$ in polar boxes, which we call $\varpi$-polar boxes, of the form 
\begin{align*}
&R \leq |z_2|^2 \leq (1+\varpi)R, \\
&\vartheta \leq \text{arg}(z_2) \leq \vartheta + 2\pi \varpi.
\end{align*}
For technical convenience we use smooth partitions of unity to accomplish this. This amounts to attaching smooth functions $g(|z_2|^2)$ and $q(\text{arg} (z_2))$, where $g(n)$ is a smooth function supported on an interval
\begin{align}\label{g interval}
R < n \leq (1+O(\varpi))R, \ \ \ \ R \asymp N,
\end{align}
and which satisfies
\begin{align}\label{g deriv bounds}
g^{(j)}(n) \ll_j (\varpi N)^{-j}, \ \ \ \ j \geq 0.
\end{align}
Further, $q$ is a smooth, $2\pi$-periodic function supported on an interval of length $O(\varpi)$ which satisfies
\begin{align}\label{q deriv bounds}
q^{(j)}(\alpha) \ll_j \varpi^{-j}, \ \ \ \ j \geq 0.
\end{align}

We observe that the boundary of the intersection between $\mathfrak{B}_2$ and the disc \eqref{disc int pol box} is a finite union of circular arcs and line segments. It is straightforward to check that the boundary has length $\ll \lambda N^{1/2}$. Any $\varpi$-polar box that intersects the boundary is contained in a $O(\varpi N^{1/2})$-neighborhood of the boundary. We deduce that the total contribution from those boxes not strictly contained in the intersection is
\begin{align*}
\ll \varpi \lambda N,
\end{align*}
and this is acceptable if we set 
\begin{align*}
\varpi = \theta^{204} \lambda \asymp \theta^{357}.
\end{align*}
It follows that
\begin{align*}
\mathcal{S} &= O(\theta^{204} \lambda^2 N) + \mathop{\sum \sum}_{(g,q) \in S(z_1,\ell_1,\ell_2)} \sum_{(z_2 \overline{z_2},\Pi)=1} \mu(|z_2|^2) \psi(z_2) g(|z_2|^2) q(\text{arg}(z_2)).
\end{align*}
The number of pairs $(g,q) \in S(z_1,\ell_1,\ell_2)$ is $\ll (\log N)^2 \varpi^{-2} \lambda^2$, so to prove \eqref{bound for cal S} it suffices to show that
\begin{align}\label{bound for cal S g q}
\mathcal{S}_{g,q} &:= \sum_{(z_2 \overline{z_2},\Pi)=1} \mu(|z_2|^2) \psi(z_2) g(|z_2|^2) q(\text{arg}(z_2)) \ll \theta^{204} \varpi^{2}(\log N)^{O(1)}  N
\end{align}
uniformly in $g$ and $q$.

Our sum $\mathcal{S}_{g,q}$ is very similar to the sum $S_\chi^k (\beta)$ treated by Friedlander and Iwaniec (see \cite[(16.14)]{friediw2}). Our treatment of $\mathcal{S}_{g,q}$ follows their treatment of $S_\chi^k(\beta)$ quite closely, and we quote the relevant statements and results of \cite[section 16]{friediw2} as necessary. Friedlander and Iwaniec work with characters having moduli divisible by 4, but this is a distinction without material consequence.

We expand $q(\alpha)$ in its Fourier series. From the derivative bounds \eqref{q deriv bounds} we see the Fourier coefficients satisfy
\begin{align}\label{fourier coeff bound}
\widehat{q}(h) \ll \frac{\varpi}{1 + \varpi^2 h^2}.
\end{align}
By means of \eqref{fourier coeff bound} we obtain the truncated Fourier series
\begin{align}\label{trunc four series}
q(\alpha) &= \sum_{|h| \leq H} \widehat{q}(h) e^{i h \alpha} + O (\varpi^{-1} H^{-1}).
\end{align}
The contribution of the error term in \eqref{trunc four series} to $\mathcal{S}_{g,q}$ is $O(N H^{-1})$.

We next use Mellin inversion to write $g(n)$ as
\begin{align}\label{g as integral}
g(n) &= \frac{1}{2\pi i} \int_{(\sigma)} \widehat{g}(s) n^{-s} ds,  \ \ \ \ \ \ \ s = \sigma + it.
\end{align}
As $g$ is supported in the interval \eqref{g interval} and satisfies \eqref{g deriv bounds}, we find that the Mellin transform $\widehat{g}(s)$ is entire and satisfies
\begin{align}\label{mellin bound}
\widehat{g}(s) &\ll \frac{\varpi N^\sigma}{1 + \varpi^2 t^2}.
\end{align}
Applying \eqref{trunc four series} and \eqref{g as integral} we obtain
\begin{align}\label{exp of cal S g q}
\mathcal{S}_{g,q} &= \sum_{|h| \leq H} \widehat{q}(h) \frac{1}{2\pi i}\int_{(\sigma)} \widehat{g}(s) Z_\psi^h (s) ds + O(N H^{-1}),
\end{align}
where
\begin{align*}
Z_\psi^h(s) &:= \sum_{(z \overline{z},\Pi)=1} \mu(Nz) \psi(z) \left( \frac{z}{|z|}\right)^h (Nz)^{-s}
\end{align*}
and $Nz$ denotes the norm $|z|^2$ of $z$. Call an ideal \emph{odd} if it contains no primes over 2 in its factorization into prime ideals. Since $z$ is odd and primary, there is a one-to-one correspondence between elements $z$ and odd ideals $\mathfrak{a}$, given by $\mathfrak{a} = (z)$. Omitting subscripts and superscripts for simplicity, we then have
\begin{align*}
Z(s) &= \sum_{(\mathfrak{a} \overline{\mathfrak{a}},\Pi)=1} \xi(\mathfrak{a}) \mu(N \mathfrak{a}) (N\mathfrak{a})^{-s},
\end{align*}
where
\begin{align*}
\xi(\mathfrak{a}) := \psi(z) \left( \frac{z}{|z|}\right)^h
\end{align*}
and $z$ is the unique primary generator of $\mathfrak{a}$. From the Euler product it follows that
\begin{align*}
Z(s) &= L(s,\xi)^{-1} P(s) G(s),
\end{align*}
where $L(s,\xi)$ is the Hecke $L$-function
\begin{align*}
L(s,\xi) &:= \sum_{\mathfrak{a}} \frac{\xi(\mathfrak{a})}{(N\mathfrak{a})^s},
\end{align*}
$P(s)$ is the Dirichlet polynomial given by
\begin{align*}
P(s) &:= \prod_{\substack{p \leq P \\ p \equiv 1 (4)}} \left(1 - \frac{\xi(\mathfrak{p})}{p^s} \right)^{-1}\left(1 - \frac{\xi(\overline{\mathfrak{p}})}{p^s} \right)^{-1}
\end{align*}
where $\mathfrak{p} \overline{\mathfrak{p}} = (p)$,
and $G(s)$ is given by an Euler product that converges absolutely and uniformly in $\sigma \geq \frac{1}{2} + \epsilon$. In the region $\sigma \geq 1-\frac{1}{\log P}$ the inequality $p^{-\sigma} < 3p^{-1}$ holds, and this gives the bound
\begin{align}\label{bound for prod P}
P(s) \ll (\log P)^3, \ \ \ \ \ \ \sigma \geq 1 - \frac{1}{\log P}.
\end{align}

Let $k$ be the modulus of $\xi$ (recall that $k \ll \theta^{-1464}$). Then $L(s,\xi)$ is nonzero (see \cite[(16.20)]{friediw2}) in the region
\begin{align*}
\sigma \geq 1- \frac{c}{\log(k + |h| + |t|)},
\end{align*}
except for possibly an exceptional real zero when $\xi$ is real. By applying the method of Siegel (\cite[Lemma 16.1]{friediw2}) one may show that when $\xi$ is real, $L(s,\xi)$ has no zeros in the region
\begin{align}\label{siegel zero free}
\sigma \geq 1 - \frac{c(\varepsilon)}{k^{\varepsilon}}, \ \ \ \ 0 < \varepsilon \leq \frac{1}{4}.
\end{align}
The constant $c(\varepsilon)$ is ineffective, and for this reason the implied constants in Proposition \ref{prop for BMN} and Theorem \ref{main theorem} are ineffective.

The inequality \eqref{siegel zero free} allows one to establish (\cite[(16.23) and (16.24)]{friediw2}) the upper bound
\begin{align*}
L(s,\xi)^{-1} \ll k (\log (|h| + |t| + 3))^2
\end{align*}
in the region
\begin{align*}
\sigma \geq 1 - \frac{c(\varepsilon)}{k^\varepsilon \log(|h| + |t|+3)}.
\end{align*}
For $T \geq |h| + 3$, we set
\begin{align*}
\beta := \text{min} \left( \frac{c(\varepsilon)}{k^\varepsilon \log T}, \frac{1}{\log P} \right),
\end{align*}
so that in the region $\sigma \geq 1 - \beta$ we have the bound
\begin{align}\label{bound on Z}
Z(s) &\ll k (\log (|h| + |t| + N))^5.
\end{align}

We now estimate the integral
\begin{align}\label{defn of int I}
I := \frac{1}{2\pi i}\int_{(\sigma)} \widehat{g}(s) Z(s) ds.
\end{align}
We move the contour of integration to
\begin{align*}
s &= 1+it, \ \ \ \ \ |t| \geq T, \\
s &= 1 - \beta + it, \ \ \ \ |t| \leq T,
\end{align*}
and add in horizontal connecting segments
\begin{align*}
s = \sigma + \pm iT, \ \ \ \ \ 1 - \beta \leq \sigma \leq 1.
\end{align*}
Estimating trivially we find by \eqref{mellin bound} and \eqref{bound on Z} that
\begin{align*}
I &\ll \varpi^{-1} \theta^{-1464} \left(T^{-1} + N^{-\beta} \right) N (\log (N+T))^5.
\end{align*}
We set $T := 3 \exp(\sqrt{\log N})$. Recalling that $\log P \leq \frac{\sqrt{\log x}}{\log \log x} \ll \frac{\sqrt{\log N}}{\log \log N}$, we see that
\begin{align}\label{I bound}
I &\ll (\log N)^5 \varpi^{-1} \theta^{-1464} N \left(\exp \left(- \frac{c(\varepsilon) \sqrt{\log N}}{k^\varepsilon} \right) + \exp \left(-c\sqrt{\log N} \right)\right)
\end{align}
uniformly in $|h| \leq 2\exp(\sqrt{\log N})$. We choose $H := \exp(\sqrt{\log N})$, then take \eqref{exp of cal S g q} together with \eqref{I bound} and sum over $|h| \leq H$ by means of \eqref{fourier coeff bound}. Provided $\varepsilon > 0$ is sufficiently small in terms of $\theta$ (take $\varepsilon = \varepsilon (L)$, compare \eqref{defn of theta}), we obtain the bound
\begin{align}\label{final bound}
\mathcal{S}_{g,q} &\ll_\epsilon N \exp \left(- c(\epsilon) (\log N)^{\frac{1}{2} - \epsilon} \right).
\end{align}
The bound \eqref{final bound} implies \eqref{bound for cal S g q}, and this completes the proof of Proposition \ref{prop for BMN}.

\section{Modifications for Theorem \ref{second theorem}}\label{sec: mods for other theorem}

The proof of Theorem \ref{second theorem} follows the same lines as the proof of Theorem \ref{main theorem}. We provide a sketch of the modified argument, and leave the task of fleshing out complete details to the interested reader.

We let $d \in \{2,3\}$, and let $\{a_1,\ldots,a_d\} \subset \{0,1,2,\ldots,9\}$ be a fixed set. Denote by $\mathcal{A}_d$ the set of nonnegative integers missing the digits $a_1,\ldots,a_d$ in their decimal expansions. Let $\gamma_d := \frac{\log(10-d)}{\log 10}$. For $Y$ a power of 10 we define
\begin{align*}
F_{Y,d} (\theta) := Y^{-\gamma_d} \left|\sum_{n < Y} \mathbf{1}_{\mathcal{B}_d}(n) e(n\theta) \right|.
\end{align*}
We note that if $Y = 10^k$ then
\begin{align*}
F_{Y,d}(\theta) &= \prod_{i=0}^{k-1}\frac{1}{10-d} \left|\sum_{n_i < 10} \mathbf{1}_{\mathcal{B}_d}(n_i) e(n_i10^i\theta) \right| \\
&= \prod_{i=1}^k \frac{1}{10-d} \left|\frac{e(10^i\theta - 1)}{e(10^{i-1}\theta)-1} - \sum_{r=1}^d e(a_r 10^{i-1}\theta) \right|.
\end{align*}
We therefore have the product formula
\begin{align*}
F_{UV}(\theta) = F_U(\theta) F_V (U\theta).
\end{align*}

The most important task is to obtain analogues of Lemmas \ref{maynard small moduli}, \ref{maynard single denominator}, and \ref{maynard type I} for the functions $F_{Y,d}$. By arguing as in the proof of \cite[Lemma 4.1]{maynard} it is not difficult to prove the analogue of Lemma \ref{maynard small moduli}.

\begin{lemma}\label{lem: more digits small moduli}
Let $q < Y^{\frac{1}{3}}$ be of the form $q = q_1q_2$ with $(q_1,10)=1$ and $q_1 > 1$. Then for any integer $a$ coprime with $q$ we have
\begin{align*}
F_{Y,d}\left(\frac{a}{q} \right) \ll \exp \left(-c \frac{\log Y}{\log q} \right)
\end{align*}
for some absolute constant $c > 0$.
\end{lemma}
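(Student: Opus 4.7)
The plan is to directly adapt Maynard's proof of his Proposition 4.1 (the source of Lemma \ref{maynard small moduli}), with only cosmetic changes to accommodate the enlarged set $\mathcal{B}$ of excluded digits. Two ingredients drive the argument: the product formula for $F_{Y,d}$, which the author has just derived, and a pointwise bound on the single-digit factor combined with an orbit estimate.

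First I would use the product formula. Writing $Y = 10^k$ and iterating $F_{UV,d}(\theta) = F_{U,d}(\theta)F_{V,d}(U\theta)$ with $U = 10$, I obtain
\begin{align*}
F_{Y,d}\!\left(\frac{a}{q}\right) = \prod_{i=0}^{k-1} G_d\!\left(\frac{10^i a}{q}\right), \qquad G_d(\alpha) := \frac{1}{10-d}\Bigl|\sum_{n \in S_d} e(n\alpha)\Bigr|,
\end{align*}
where $S_d := \{0,1,\ldots,9\}\setminus\mathcal{B}$. This reduces the problem to a pointwise bound for $G_d$ combined with control of the orbit $\{10^i a/q \bmod 1\}_{i=0}^{k-1}$.

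Second I would establish the pointwise bound $G_d(\alpha) \leq 1 - c_d \sin^2(\pi\alpha)$ for some $c_d > 0$. Squaring and pairing terms gives
\begin{align*}
G_d(\alpha)^2 = 1 - \frac{2}{(10-d)^2}\sum_{\substack{m,n \in S_d \\ m<n}} \sin^2\!\bigl(\pi(n-m)\alpha\bigr).
\end{align*}
Because $|\mathcal{B}| \leq 3$, we have $|S_d| \geq 7$, and by pigeonhole $S_d$ must contain at least one pair of consecutive integers $\{n,n+1\}$; the resulting term $\sin^2(\pi\alpha)$ yields the claimed bound after using $1 - \sqrt{1-t} \geq t/2$.

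The main obstacle, as in Maynard's original argument, is the orbit analysis. I would factor $q = q_1 q_2$ with $(q_1,10) = 1$ and $q_2 \mid 10^\infty$; the hypotheses $q_1 > 1$ and $(q_1,10)=1$ guarantee $q_1 \nmid 10^i$ for every $i$, so for $i \geq \log_{10} q_2$ the reduced denominator of $10^i a/q$ is a nontrivial divisor of $q_1$. Following \cite[Proposition 4.1]{maynard} one exploits the orbit of $10$ in $(\mathbb{Z}/q_1\mathbb{Z})^*$, together with a Diophantine counting/averaging step, to show that among the $k$ values of $i$, a controlled proportion yield $\|10^i a/q\| \gg 1$, while the remaining values contribute bounds $\leq 1$ trivially. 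Feeding this into the pointwise estimate and taking the product delivers the exponential decay $F_{Y,d}(a/q) \ll \exp(-c_0 \log Y/\log q)$. Every step of Maynard's orbit argument depends only on the denominator $q$ and is completely insensitive to whether one or several digits have been removed, so no genuinely new idea is needed; only the constant $c_d$ in the pointwise bound changes, reflecting $|S_d| = 10-d \geq 7$.
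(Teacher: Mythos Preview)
Your proposal is correct and follows exactly the route the paper indicates: the paper's own proof consists solely of the remark that one argues as in the proof of \cite[Lemma 4.1]{maynard}, and you have spelled out precisely that adaptation, including the one nontrivial point (the pointwise bound on the single-digit factor $G_d$) and why the pigeonhole observation $|S_d|\ge 7$ guarantees a consecutive pair in $S_d$ so that the bound $G_d(\alpha)^2 \le 1 - \tfrac{2}{(10-d)^2}\sin^2(\pi\alpha)$ goes through.
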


It is a little more difficult to obtain the analogues of Lemmas \ref{maynard single denominator} and \ref{maynard type I}. They will follow from a good upper bound for
\begin{align*}
\sup_{\beta \in \mathbb{R}} \sum_{a < Y}F_{Y,d} \left(\frac{a}{Y} + \beta \right).
\end{align*}
The key is that we can estimate moments of $F_{Y,d}$ by numerically computing the largest eigenvalue of a certain matrix.

\begin{lemma}\label{lem: matrix moment}
Let $J$ be a positive integer. Let $\lambda_{t,J,d}$ be the largest eigenvalue of the $10^J \times 10^J$ matrix $M_{t,d}$, given by
\begin{align*}
(M_{t,d})_{i,j} := 
\begin{cases}
G_d(a_1,\ldots,a_{J+1})^t, &\text{if } i-1 = \sum_{\ell = 1}^J a_{\ell+1}10^{\ell-1}, j-1 = \sum_{\ell=1}^J a_\ell 10^{\ell-1} \\
&\text{ for some } a_1,\ldots,a_{J+1} \in \{0,\ldots,9\}, \\
0, &\text{otherwise},
\end{cases}
\end{align*}
where
\begin{align*}
G_d(t_0,\ldots,t_J) &:= \sup_{|\gamma| \leq 10^{-J-1}}\frac{1}{10-d} \left|\frac{e \left(\sum_{j=0}^Jt_j 10^{-j}+10\gamma \right)-1}{e \left(\sum_{j=0}^Jt_j 10^{-j-1}+\gamma \right)-1} - \sum_{r=1}^d e \left(\sum_{j=0}^J a_rt_j10^{-j-1} + a_r\gamma \right) \right|.
\end{align*}
Then
\begin{align*}
\sum_{0 \leq a < 10^k} F_{10^k,d}\left(\frac{a}{10^k} \right)^t \ll_{t,J,d} \ \lambda_{t,J,d}^k.
\end{align*}
\end{lemma}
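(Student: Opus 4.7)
The plan is a transfer-matrix argument in the spirit of \cite[Section 4]{maynard}, adapted to the multiple-missing-digit setting.

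First, iterating the product formula $F_{10U,d}(\theta) = F_{10,d}(\theta)F_{U,d}(10\theta)$ a total of $k$ times yields the factorization
\begin{equation*}
F_{10^k,d}(\theta) = \prod_{i=1}^{k} g_d\bigl(\{10^{i-1}\theta\}\bigr), \qquad g_d(\phi) := \frac{1}{10-d}\left|\frac{e(10\phi)-1}{e(\phi)-1} - \sum_{r=1}^{d}e(a_r\phi)\right|.
\end{equation*}
Since the geometric sum identity gives $\frac{e(10\phi)-1}{e(\phi)-1}=\sum_{n=0}^{9}e(n\phi)$, we have $g_d(\phi) = (10-d)^{-1}|\sum_{n\notin\{a_1,\ldots,a_d\}}e(n\phi)| \leq 1$.

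Next, for $\theta = a/10^k$ with base-$10$ expansion $a = \sum_{j=0}^{k-1}a_j\, 10^j$, a direct computation gives $\{10^{i-1}a/10^k\} = \sum_{\ell=0}^{k-i}a_{k-i-\ell}\,10^{-\ell-1}$. Setting $m = k-i$, for positions with $m\geq J$ this decomposes as $\sum_{j=0}^{J}a_{m-j}10^{-j-1} + \gamma$ with $|\gamma|<10^{-J-1}$. Directly from the definition of $G_d$ as a supremum over such $\gamma$, this produces the pointwise bound $g_d(\{10^{i-1}a/10^k\}) \leq G_d(a_m,a_{m-1},\ldots,a_{m-J})$. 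For the remaining $J$ positions ($m=0,1,\ldots,J-1$) I would just use the trivial bound $g_d\leq 1$. Combining these gives
\begin{equation*}
F_{10^k,d}(a/10^k)^{t} \leq \prod_{m=J}^{k-1} G_d(a_m,a_{m-1},\ldots,a_{m-J})^{t}.
\end{equation*}

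The transfer-matrix structure now emerges: writing $\tau_m := (a_m,a_{m-1},\ldots,a_{m-J+1})$ and identifying this tuple with the index in $\{1,\ldots,10^J\}$ via $\tau_m - 1 = \sum_{\ell=1}^{J}a_{m-\ell+1}10^{\ell-1}$, the $m$-th factor above is precisely the entry $(M_{t,d})_{\tau_{m-1},\tau_m}$; compatibility between successive $\tau_m$'s (sharing their middle $J-1$ digits) is enforced automatically because $(M_{t,d})_{i,j}$ vanishes for incompatible digit strings. Summing over all digit sequences $(a_0,\ldots,a_{k-1})\in\{0,\ldots,9\}^k$ therefore collapses to
\begin{equation*}
\sum_{0\leq a<10^k} F_{10^k,d}(a/10^k)^t \leq \sum_{\tau_{J-1},\tau_{k-1}}(M_{t,d}^{k-J})_{\tau_{J-1},\tau_{k-1}} = \mathbf{1}^{T}M_{t,d}^{k-J}\mathbf{1}.
\end{equation*}

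Finally, for the fixed nonnegative matrix $M_{t,d}$ of dimension $10^J$, the spectral-radius bound $\mathbf{1}^{T}M_{t,d}^{n}\mathbf{1}\ll_{t,J,d}\lambda_{t,J,d}^{n}$ finishes the proof. The main bookkeeping obstacle is matching the ordering of digits in $G_d$ with the index-encoding convention in $M_{t,d}$, keeping straight which position of the $(J{+}1)$-tuple plays the role of the ``outgoing'' versus the ``incoming'' state. The only real analytic subtlety is the last step: a priori a Jordan block at the spectral radius could insert an extra factor $k^{O(1)}$, but the transfer matrices arising here are irreducible with a simple Perron eigenvalue (which can be verified by inspecting the nonzero structure of $M_{t,d}$), so no polynomial factor arises; alternatively, since the downstream applications use only numerical upper bounds for $\lambda_{t,J,d}$, any such polynomial can be absorbed into an infinitesimal enlargement of $\lambda_{t,J,d}$.
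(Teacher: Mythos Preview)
Your proof is correct and follows essentially the same transfer-matrix approach as the paper. The only cosmetic difference is that the paper pads the digit sequence with zeros beyond position $k$ (so that all $k$ factors are bounded by $G_d$ values and the initial state in the matrix product is fixed at the index $i=1$, giving $\sum_j (M_{t,d}^k)_{1,j}$), whereas you trivially bound the $J$ boundary factors by $1$ and sum over both endpoints to obtain $\mathbf{1}^T M_{t,d}^{k-J}\mathbf{1}$; both variants give the same $\lambda_{t,J,d}^{k+O_J(1)}$ bound and finish via Perron--Frobenius.
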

\begin{proof}
Following the proof of \cite[Lemma 4.2]{maynard}, we find that
\begin{align*}
F_{Y,d}\left(\sum_{i=1}^k \frac{t_i}{10^i} \right) \leq \prod_{i=1}^k G_d(t_i,\ldots,t_{i+J}),
\end{align*}
where $t_j = 0$ for $j > k$. Maynard proceeds at this point using a Markov chain argument, but we give here a different argument due to Kevin Ford (private communication).

Write $M_{t,d} = (m_{ij})_{i,j}$ (we suppress the dependence on $d$ for notational convenience), where $m_{ij}$ is zero unless
\begin{align*}
i - 1 &= a_2+a_310 + \cdots + 10^{J-1}a_{J+1}, \\
j-1 &= a_1 + 10a_2 + \cdots + 10^{J-1} a_J
\end{align*}
for some digits $a_1,\ldots,a_J$. Thus
\begin{align*}
(M_{t,d}^k)_{i,j} = \sum_{i_1,\ldots,i_{k-1}} m_{i,i_1} m_{i_1,i_2}\cdots m_{i_{k-1},j},
\end{align*}
where the product is nonzero only if
\begin{align*}
j-1 &= a_1 + 10a_2 + \cdots + 10^{J-1} a_J, \\
i_{k-1}-1 &= a_2 + 10a_3 + \cdots + 10^{J-1} a_{J+1}, \\
&\vdots \\
i_1-1 &= a_k + 10a_{k+1} + \cdots + 10^{J-1} a_{k+J-1}, \\
i-1 &= a_{k+1} + 10a_{k+2} + \cdots + 10^{J-1} a_{k+J}.
\end{align*}
Fixing $i = 1$, so that $a_{k+1} = \cdots = a_{J+k} = 0$, and summing over $j$ we obtain
\begin{align*}
\sum_j (M_{t,d}^k)_{1,j} = \sum_{a_1,\ldots,a_k} G_d(a_1,\ldots,a_{J+1})^t \cdots G(a_k,\ldots,a_{k+J})^t \geq \sum_{0 \leq a < 10^k} F_{10^k,d}\left(\frac{a}{10^k} \right)^t.
\end{align*}
One may then use the Perron-Frobenius theorem to obtain the conclusion of the lemma.
\end{proof}

The following is a consequence of Lemma \ref{lem: matrix moment} and some numerical computation.
\begin{lemma}\label{lem: more digits L1 bound}
We have
\begin{align*}
\sup_{\beta \in \mathbb{R}} \ \sum_{a < Y} F_{Y,d} \left(\beta + \frac{a}{Y} \right) \ll Y^{\alpha_d}
\end{align*}
and
\begin{align*}
\int_0^1 F_{Y,d}(t)dt \ll Y^{-1 + \alpha_d},
\end{align*}
where
\begin{align*}
\alpha_2 &= \frac{54}{125}, \ \ \ \ \ \ \ \alpha_3 = \frac{99}{200}.
\end{align*}
\end{lemma}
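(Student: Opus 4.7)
The plan is to deduce both bounds from Lemma \ref{lem: matrix moment} together with a sub-multiplicativity argument. First, the integral bound is immediate from the supremum bound: partitioning $[0,1]$ into $Y$ equal subintervals $[a/Y, (a+1)/Y)$,
\begin{align*}
\int_0^1 F_{Y,d}(t)\,dt = \int_0^{1/Y} \sum_{a < Y} F_{Y,d}\!\left(\beta + \frac{a}{Y}\right) d\beta \leq \frac{1}{Y} \sup_{\beta} \sum_{a < Y} F_{Y,d}\!\left(\beta + \frac{a}{Y}\right),
\end{align*}
so it suffices to establish the first inequality.

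For the supremum bound, set $S_d(k) := \sup_\beta \sum_{a<10^k} F_{10^k,d}(\beta + a/10^k)$. The plan is to prove sub-multiplicativity $S_d(k_1+k_2) \leq S_d(k_1) S_d(k_2)$ and to bound $S_d(k)$ directly by $\lambda_{1,J,d}^k$ for some fixed $J$ via Lemma \ref{lem: matrix moment}. The sub-multiplicativity comes from the product formula: with $Y=UV$, $U=10^{k_1}$, $V=10^{k_2}$, decompose $a = u + Vc$ with $0 \leq u < V$, $0 \leq c < U$ and compute $Ua/Y \equiv u/V \pmod 1$ together with $a/Y = u/Y + c/U$ to get
\begin{align*}
\sum_{a<Y} F_{Y,d}(\beta + a/Y) = \sum_{u<V} F_{V,d}(U\beta + u/V) \sum_{c<U} F_{U,d}\!\left((\beta+u/Y) + \frac{c}{U}\right) \leq S_d(k_1) S_d(k_2).
\end{align*}
To apply Lemma \ref{lem: matrix moment} uniformly in $\beta$, I would use that $G_d(t_0,\ldots,t_J)$ is defined with a supremum over $|\gamma| \leq 10^{-J-1}$: any $\beta + a/10^k \pmod 1$ can be written in digit form $\sum_{i=1}^{k} t_i / 10^i + \gamma$ with digits $t_i \in \{0,\ldots,9\}$ and $|\gamma| \leq 10^{-k-1} \leq 10^{-J-1}$ (provided $k \geq J$), so the $\beta$-shift is absorbed into $\gamma$. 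The proof of Lemma \ref{lem: matrix moment} thus gives the uniform bound $S_d(k) \ll_{J,d} \lambda_{1,J,d}^k$, and combined with sub-multiplicativity this yields $S_d(k) \ll 10^{k\alpha_d}$ as soon as $\lambda_{1,J,d} \leq 10^{\alpha_d}$ for some admissible $J$.

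The reduction is now complete modulo the finite verification that there exists $J$ for which $\lambda_{1,J,2} \leq 10^{54/125}$ and $\lambda_{1,J,3} \leq 10^{99/200}$. This is the main obstacle, and corresponds to the author's allusion to ``numerical computation.'' The matrix $M_{1,d}$ has dimension $10^J$ but each row has at most $10-d$ nonzero entries, so its dominant eigenvalue is accessible by sparse power iteration; the delicate issue is choosing $J$ large enough that the bound on $\lambda_{1,J,d}$ is as sharp as $10^{\alpha_d}$, while keeping the $10^J \times 10^J$ computation feasible and rigorously controlling the supremum $G_d(t_0,\ldots,t_J)$ (which is itself a supremum over a small interval of $\gamma$, and must be bounded by evaluating at finitely many sample points together with a derivative estimate).
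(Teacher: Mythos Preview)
Your approach is essentially the same as the paper's: reduce the integral bound to the supremum bound by partitioning $[0,1]$, then reduce the supremum bound to the eigenvalue estimate of Lemma~\ref{lem: matrix moment}, with the numerical verification of $\lambda_{1,J,d} < 10^{\alpha_d}$ as the crux. The paper specifies $J=2$ (so $\lambda_{1,2,2}<10^{54/125}$ and $\lambda_{1,2,3}<10^{99/200}$, checked over all choices of excluded digits) and simply cites ``the argument of \cite[Lemma 4.3]{maynard}'' for the passage to the uniform-in-$\beta$ version, which is exactly the absorption of the shift into the tail $\gamma$ that you sketch.

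Two minor remarks. First, the sub-multiplicativity $S_d(k_1+k_2)\leq S_d(k_1)S_d(k_2)$ is correct but unnecessary: the matrix bound already gives $S_d(k)\ll \lambda_{1,J,d}^k$ for all $k$, so once $\lambda_{1,J,d}\leq 10^{\alpha_d}$ you are done. Second, your phrasing ``the $\beta$-shift is absorbed into $\gamma$'' is slightly imprecise: what actually happens is that for arbitrary $\theta$ the pointwise bound $F_{10^k,d}(\theta)\leq \prod_{i=1}^k G_d(t_i,\dots,t_{i+J})$ holds with $t_i$ the digits of the full decimal expansion of $\theta$, and as $a$ runs over $\{0,\dots,10^k-1\}$ the first $k$ digits of $\beta+a/10^k \pmod 1$ run over all of $\{0,\dots,9\}^k$, so the matrix argument applies verbatim. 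The tail beyond the $(i+J)$-th digit satisfies $0\leq \gamma_i\leq 10^{-J-1}$ (not $10^{-k-1}$), which is precisely what $G_d$ is designed to swallow.
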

\begin{proof}
We use bounds on $\lambda_{1,2,2}$ and $\lambda_{1,2,3}$. By numerical calculation\footnote{Mathematica\textregistered \ files with these computations are included with this work on \text{arxiv.org}.} we find
\begin{align*}
\lambda_{1,2,2} < 10^{\frac{54}{125}}
\end{align*}
for all choices of $\{a_1,a_2\} \subset \{0,\ldots,9\}$, and
\begin{align*}
\lambda_{1,2,3} < 10^{\frac{99}{200}}
\end{align*}
for all choices of $\{a_1,a_2,a_3\} \subset \{0,\ldots,9\}$. By the argument of \cite[Lemma 4.3]{maynard} this then yields
\begin{align*}
\sup_{\beta \in \mathbb{R}} \ \sum_{a < Y} F_{Y,d} \left(\beta + \frac{a}{Y} \right) \ll Y^{\alpha_d}.
\end{align*}
To complete the proof we observe
\begin{align*}
\int_0^1 F_{Y,d}(t) dt &= \sum_{0 \leq a < Y} \int_{\frac{a}{Y}}^{\frac{a}{Y} + \frac{1}{Y}} F_{Y,d}(t)dt = \int_0^{\frac{1}{Y}} \sum_{0 \leq a < Y}F_{Y,d}\left(\frac{a}{Y} + t \right) dt \\ 
&\leq Y^{-1} \sup_{\beta \in \mathbb{R}} \ \sum_{a < Y} F_{Y,d} \left(\frac{a}{Y} + \beta \right) \ll Y^{-1+\alpha_d}.
\end{align*}
\end{proof}

We note it is crucial for the proof of Theorem \ref{second theorem} that $\alpha_d < \frac{1}{2}$. For $d \geq 4$ there exist choices of excluded digits which force $\alpha_d > \frac{1}{2}$.

\begin{lemma}\label{lem: more digits large sieve}
We have
\begin{align*}
\sup_{\beta \in \mathbb{R}} \ \sum_{a \leq q} \left|F_{Y,d} \left(\frac{a}{q} + \beta \right) \right| &\ll q^{\alpha_d} + \frac{q}{Y^{1-\alpha_d}}, \\
\sup_{\beta \in \mathbb{R}} \ \sum_{q\leq Q} \sum_{\substack{1 \leq a \leq q \\ (a,q)=1}} \left|F_{Y,d} \left(\frac{a}{q} + \beta \right) \right| &\ll Q^{2\alpha_d} + \frac{Q^2}{Y^{1-\alpha_d}}
\end{align*}
\end{lemma}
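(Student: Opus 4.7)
\emph{Plan.} Lemma \ref{lem: more digits large sieve} is the direct multi-digit analog of Lemmas \ref{maynard single denominator} and \ref{maynard type I}, with the exponents $27/77$ and $50/77$ replaced respectively by $\alpha_d$ and $1-\alpha_d$. The proof mirrors Maynard's proof of \cite[Lemma 4.5]{maynard} verbatim, since the only inputs to that proof are: (i) the product formula $F_{UV,d}(\theta) = F_{U,d}(\theta) F_{V,d}(U\theta)$, which we have already established in the remark preceding Lemma \ref{lem: more digits small moduli}; and (ii) the discrete bound $\sup_\beta \sum_{a<Y} F_{Y,d}(\beta + a/Y) \ll Y^{\alpha_d}$ (and its integral corollary $\|F_{Y,d}\|_1 \ll Y^{-1+\alpha_d}$), which is our Lemma \ref{lem: more digits L1 bound}. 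No new ingredients are needed.

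For the single-denominator bound, I would let $U$ be the largest power of $10$ with $U \leq q$ (so $U \asymp q$) and set $V = Y/U$, which is a power of $10$ provided $q \leq Y$; the case $q > Y$ reduces trivially to $F_{Y,d} \leq 1$ and gives $q \ll q/Y^{1-\alpha_d}$. Applying the product formula and factoring out $(U,q) = g$, one rewrites
\[
\sum_{a\leq q} F_{Y,d}\!\left(\tfrac{a}{q}+\beta\right) = \sum_{a\leq q} F_{U,d}\!\left(\tfrac{a}{q}+\beta\right)F_{V,d}\!\left(\tfrac{Ua}{q}+U\beta\right),
\]
and splits the sum according to whether $F_{V,d}\bigl(\tfrac{Ua}{q}+U\beta\bigr)$ is larger or smaller than a threshold of order $V^{-1+\alpha_d}$. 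The ``small'' contribution is bounded by $q V^{-1+\alpha_d} \ll q/Y^{1-\alpha_d}$, producing the second term. On the ``large'' set, bounding $F_{V,d}$ by $1$ and relating the fractions $\{a/q\}$ to the $1/U$-grid via Lemma \ref{lem: more digits L1 bound} yields a bound of size $U^{\alpha_d} \asymp q^{\alpha_d}$, producing the first term.

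For the type-I bound, the same strategy is applied with $q$ replaced by the full range $q \leq Q$; now the Farey fractions $\{a/q : (a,q)=1,\ q\leq Q\}$ have pairwise spacing of order $1/Q^2$, so one takes $U$ to be a power of $10$ with $U \asymp Q^2$ (when $Q^2 \leq Y$) and applies Lemma \ref{lem: more digits L1 bound} at scale $U \asymp Q^2$. This produces the main term $U^{\alpha_d} \asymp Q^{2\alpha_d}$; the complementary ``small'' contribution, bounded via $\|F_{Y,d}\|_1 \ll Y^{-1+\alpha_d}$ times the Farey count $\ll Q^2$, gives $Q^2/Y^{1-\alpha_d}$.

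The main obstacle is bookkeeping rather than conceptual: one must carefully track how the arithmetic of $q$ relative to the power-of-$10$ scale $U$ affects the reduction of $\sum_{a\bmod q}$ to $\sum_{b\bmod q'}$ with $q' = q/(U,q)$, and then embed $\{b/q'\}$ into the $1/V$-grid to invoke Lemma \ref{lem: more digits L1 bound}. This $\gcd$-bookkeeping is exactly as in Maynard's proof and introduces no loss because the gcd factor $g = (U,q)$ is compensated by the smaller modulus $q'$. The only arithmetic fact one needs beyond Maynard's setup is that $\alpha_d < 1$ (so that $1-\alpha_d > 0$ makes the second term meaningful), which is guaranteed by Lemma \ref{lem: more digits L1 bound} for $d \in \{2,3\}$.
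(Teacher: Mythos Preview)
Your proposal is correct and follows exactly the same approach as the paper: the paper's proof is the single sentence ``We use the large sieve argument of \cite[Lemma 4.5]{maynard} with Lemma \ref{lem: more digits L1 bound},'' and your proposal simply spells out what that argument entails. The inputs you identify (the product formula and Lemma \ref{lem: more digits L1 bound}) are precisely those the paper invokes.
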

\begin{proof}
We use the large sieve argument of \cite[Lemma 4.5]{maynard} with Lemma \ref{lem: more digits L1 bound}.
\end{proof}

Let us now give a broad sketch of the proof of Theorem \ref{second theorem}. We proceed as in the proof of Theorem \ref{main theorem}, only we use Lemma \ref{lem: more digits large sieve} instead of Lemma \ref{maynard single denominator} or Lemma \ref{maynard type I}.

Our sequence 
\begin{align*}
\mathop{\sum \sum}_{\substack{m^2+\ell^2 = n \\ (\ell,\Pi) = 1}} \mathbf{1}_{\mathcal{A}_d}(\ell)
\end{align*}
has level of distribution
\begin{align*}
D \leq x^{\gamma_d - \epsilon},
\end{align*}
and we have an acceptable Type II bound provided
\begin{align}\label{eq: Type II range more digits}
x^{\frac{1}{2} - \frac{\gamma_d}{2} + \epsilon} \ll N \ll x^{\frac{1}{2}(1-\alpha_d) - \epsilon}.
\end{align}
(Compare \eqref{eq: Type II range more digits} with \eqref{restrictions on N}.) Since
\begin{align*}
\frac{1}{2}(1-\alpha_d) - \left(\frac{1}{2} - \frac{\gamma_d}{2} \right) > 1 - \gamma_d
\end{align*}
there exists an appropriate choice of $U$ and $V$ in Vaughan's identity \eqref{vaughans identity} (compare with \eqref{admissible choices for U,V}).

At various points in the proof of Theorem \ref{main theorem} we had to perform a short interval decomposition in order to gain control on elements of $\mathcal{A}$ in arithmetic progressions (see the arguments in section \ref{sec: sieve main, using fund} leading up to Lemmas \ref{prototypical estimation lemma one} and \ref{prototypical estimation lemma two}). The short interval decomposition depended on whether or not the missing digit was the zero digit. In the case of Theorem \ref{second theorem} one argues similarly, and finds that the short interval decomposition depends only on whether $0 \in \mathcal{A}_d$.

\section*{Acknowledgements}
The author wishes to thank his adviser Kevin Ford for suggesting the problem which resulted in this paper, and for many helpful conversations. The author expresses gratitude to Junxian Li for some assistance with Mathematica\textregistered. Part of this research was conducted while the author was supported by NSF grant DMS-1501982.

\end{document}